\newcommand{\DR}{\mathrm{R}}
\newcommand{\DL}{\mathbb{L}}
\renewcommand{\mathbb}{\mathds}
\theoremstyle{plain}
\newtheorem{theorem}{Theorem}[section]
\newtheorem{lemma}[theorem]{Lemma}
\newtheorem{proposition}[theorem]{Proposition}
\newtheorem{conjecture}[theorem]{Conjecture}
\theoremstyle{definition}
\newtheorem{example}[theorem]{Example}
\newtheorem{hypothesis}[theorem]{Hypothesis}
\theoremstyle{definition}
\newtheorem{definition}[theorem]{Definition}
\newtheorem{remark}[theorem]{Remark}
\newtheorem*{acknowledgments}{Acknowledgements}
\font\russ=wncyr10  1
\def\sha{\hbox{\russ\char88}}
\DeclareMathOperator{\Cl}{Cl}
\DeclareMathOperator{\Ext}{Ext}
\DeclareMathOperator{\Gal}{Gal}
\DeclareMathOperator{\Hom}{Hom}
\DeclareMathOperator{\End}{End}
\DeclareMathOperator{\Spec}{Spec}
\DeclareMathOperator{\res}{res}
\DeclareMathOperator{\bigO}{\mathcal{O}}
\DeclareMathOperator{\im}{im}
\DeclareMathOperator{\coker}{coker}
\DeclareMathOperator{\Fitt}{Fitt}
\newcommand{\CC}{\mathbb{C}}
\newcommand{\EE}{\mathrm{E}}
\newcommand{\GG}{\mathbb{G}}
\newcommand{\QQ}{\mathbb{Q}}
\newcommand{\Q}{\mathbb{Q}}
\newcommand{\ZZ}{\mathbb{Z}}
\newcommand{\Z}{\mathbb{Z}}
\newcommand{\cG}{\mathcal{G}}
\newcommand{\cO}{\mathcal{O}}
\newcommand{\cR}{\mathcal{R}}
\newcommand{\fz}{\mathfrak{z}}
\renewcommand{\:}{\colon}
\renewcommand{\det}{\mathrm{det}}
\newcommand{\cyc}{\mathrm{cyc}}
\newcommand{\dR}{\mathrm{dR}}
\newcommand{\p}{\mathfrak{p}}
\newcommand{\Ann}{\mathrm{Ann}}
\newcommand{\B}{\mathrm{B}}
 \newcommand{\bidual}{\bigcap\nolimits}
\newcommand{\exprod}{\bigwedge\nolimits}
\newcommand{\rgamma}{\mathrm{R}\Gamma}
\newcommand{\rhom}{\mathrm{R}\Hom}
\mathchardef\ordinarycolon\mathcode`\:
\def\l@subsection{\@tocline{2}{0pt}{2.5pc}{5pc}{}}
\newcommand\@dotsep{4.5}
\def\@tocline#1#2#3#4#5#6#7{\relax
  \ifnum #1>\c@tocdepth 
  \else
    \par \addpenalty\@secpenalty\addvspace{#2}%
    \begingroup \hyphenpenalty\@M
    \@ifempty{#4}{%
      \@tempdima\csname r@tocindent\number#1\endcsname\relax
    }{%
      \@tempdima#4\relax
    }%
    \parindent\z@ \leftskip#3\relax \advance\leftskip\@tempdima\relax
    \rightskip\@pnumwidth plus1em \parfillskip-\@pnumwidth
    #5\leavevmode\hskip-\@tempdima{#6}\nobreak
    \leaders\hbox{$\m@th\mkern \@dotsep mu\hbox{.}\mkern \@dotsep mu$}\hfill
    \nobreak
    \hbox to\@pnumwidth{\@tocpagenum{#7}}\par
    \nobreak
    \endgroup
  \fi}
\renewcommand\csname r@tocindent0\endcsname{0pt}
\def\l@subsection{\@tocline{2}{0pt}{2.5pc}{5pc}{}}
\begin{document}

\title[]{On $p$-adic families of special elements \\
for rank-one motives}

\author{Dominik Bullach, David Burns and Takamichi Sano}

\begin{abstract}
We conjecture that special elements associated with rank-one motives are obtained $p$-adically from Rubin-Stark elements by means of a precise \textit{higher-rank Soul\'e twist} construction. We show this conjecture incorporates a variety of known results and existing predictions and also gives rise to a concrete strategy for proving the equivariant Tamagawa Number Conjecture for rank-one motives. We then use this approach to obtain new evidence in support of the equivariant Tamagawa Number Conjecture in the setting of CM abelian varieties. 
\end{abstract}

\address{King's College London,
Department of Mathematics,
London WC2R 2LS,
U.K.}
\email{dominik.bullach@kcl.ac.uk}

\address{King's College London,
Department of Mathematics,
London WC2R 2LS,
U.K.}
\email{david.burns@kcl.ac.uk}

\address{Osaka  City University,
Department of Mathematics,
3-3-138 Sugimoto\\Sumiyoshi-ku\\Osaka\\558-8585,
Japan}
\email{sano@sci.osaka-cu.ac.jp}

\maketitle
\tableofcontents


\section{Introduction} \label{Intro}

Let $M$ be a (pure) motive defined over a number field $K$ and endowed with an action of a finite dimensional  commutative semisimple $\QQ$-algebra $R$. For a prime $p$ set $R_p := \QQ_p\otimes_\QQ R$ and write $V^\ast(1)$ for the Kummer dual of the $p$-adic realization $V$ of $M$.

In \cite{bss2} Sakamoto and the second and third of the present authors defined a canonical
\textit{Bloch-Kato~element} $\eta_M$ that lies in $\CC_p\otimes_{\QQ_p}{\bigwedge}^r_{R_p}H^1(K,V^\ast(1))$ for an appropriate non-negative integer $r$ (that depends on $M$ and $R$). These elements simultaneously generalize several well-known families of special elements, including the \textit{Rubin-Stark elements} defined by Rubin \cite{rubinstark} for the multiplicative group and the \textit{zeta elements} constructed by Kato \cite{kato} for elliptic curves over $\QQ$.\medskip \\
%
%
In this note we restrict attention to motives that are of rank one in the sense of Deligne \cite{deligneconj}, and predict a precise family of relations  between $\eta_{M}$ and $\eta_{M'}$ for differing such motives $M$ and $M'$.

The source of these relations is a natural higher-rank generalization of the notion of  \textit{Soul\'e twists} (in the terminology used by Loeffler and Zerbes, see \cite[\S 1.4c]{LZ}) that uses
 the theory of exterior power biduals to overcome technical problems that arise when dealing with torsion coefficients. For a rank-one motive we shall then define the \textit{Soul\'e-Stark element}  to be an appropriate \textit{higher-rank Soul\'e twist} of Rubin-Stark elements (see Definition \ref{def soule stark}) and predict that this element coincides with the corresponding Bloch-Kato element. This prediction will be referred to as the \textit{Soul\'e-Stark Conjecture} (see Conjecture \ref{conj}) and entails precise, and explicit, relations between Bloch-Kato elements and Rubin-Stark elements.

We will see, for example, that for motives arising from the multiplicative group $\mathbb{G}_m$ the Soul\'e-Stark Conjecture predicts explicit relations between the \textit{generalized Stark elements} introduced in \cite{bks2-2} that are essentially different from the relations  that are investigated in \cite{sbA2} via a study of `functional equations for Euler systems', whilst for motives arising from CM abelian varieties the Soul\'e-Stark Conjecture extends, and refines, the `explicit reciprocity conjecture' that is formulated by B\"uy\"ukboduk and Lei in \cite{BL1}.

However, aside from any intrinsic interest that the Soul\'e-Stark Conjecture  might have in particular cases, we show that its validity would also establish a precise connection between the main conjecture of higher-rank equivariant Iwasawa theory for $\mathbb{G}_m$, as formulated explicitly by Kurihara et al.\@ in \cite[Conj.\@ 3.1]{bks2}, and the equivariant Tamagawa Number Conjecture for a general rank-one motive. This result is stated precisely as Theorem \ref{descent} and establishes a natural refinement and generalization to the setting of rank-one motives with coefficients of the main strategy that was used by Huber and Kings \cite{HK} to prove the Tamagawa Number Conjecture for Tate motives over abelian extensions of $\QQ$. We remark that this approach is thus essentially different from that used by Greither and the second author \cite{BG} and, more generally, by Kurihara et al.\@ \cite{bks2} which relies on a study of the Mazur-Rubin-Sano Conjecture (see Remark \ref{remark descent}).\newpage
For this approach to be of any practical use, one must of course understand both the main conjecture of higher-rank equivariant Iwasawa theory for $\mathbb{G}_m$ and the relevant special cases of the Soul\'e-Stark Conjecture. 

The first of these issues is addressed in \S\ref{mc Gm section} where we use the recently developed `higher-rank Kolyvagin-derivative' techniques of Sakamoto et al.\@ \cite{bss} to obtain concrete new evidence in support of \cite[Conj.\@ 3.1]{bks2}. Our main result in this regard is stated as Theorem \ref{classical-imc-result} and refines earlier results of B\"uy\"ukboduk  in \cite{buyukboduk-1} and \cite{buyukboduk-2} and of B\"uy\"ukboduk and Lei in \cite{BL1}. 

Then, in \S\ref{ss section}, we establish explicit relations between important cases of Conjecture \ref{conj}  and results and conjectures already existing in the literature, thereby deriving concrete evidence in support of the Soul\'e-Stark Conjecture in these cases. 

In \S\ref{tr cm section} we show firstly that the Soul\'e-Stark Conjecture incorporates a wide variety of known facts and existing predictions relating to invariants of $\mathbb{G}_m$, ranging from the interpolation properties of Deligne-Ribet $p$-adic $L$-functions to the results of Beilinson and Huber-Wildeshaus
on the cyclotomic elements of Deligne-Soul\'e, the $p$-adic Beilinson conjecture of Besser-Buckingham-de\,Jeu-Roblot \cite{BBJR} and the explicit reciprocity law for Rubin-Stark elements conjectured by Solomon \cite{solomon}. 

In \S\ref{BL-section}, we shall then show that, in the setting of motives arising from CM abelian varieties, the Soul\'e-Stark Conjecture both extends and refines the explicit reciprocity conjecture studied in \cite{BL1} and \cite{BL2}. Upon combining this observation with the result of Theorem \ref{classical-imc-result} we are then able to derive concrete new evidence in support of the equivariant Tamagawa Number Conjecture in this case (see Theorem \ref{final cor}) and thereby also refine some of the main results of \cite{BL1}. 

Finally, we note that the approach developed here will also allow us to clarify other aspects of the results and conjectures in  \cite{BL1} and \cite{BL2} (for details of which see \S\ref{Perrin-Riou-Stark-section} and \S\ref{exp rec BL}).

\begin{acknowledgments}
Some of the material presented in \S\ref{bk stark section} and \S\ref{cong conj section} below is an updated version of results in the (unpublished) arXiv-version of the article \cite{bks2-2} of Masato Kurihara and the second and third authors. We are very grateful to Kurihara for permission to include that material in this article and, more generally, for many interesting discussions, encouragement and advice. We are also grateful to Kazim B\"uy\"ukboduk for helpful correspondence concerning results in the articles \cite{BL1} and \cite{BL2}.\\ 
The first author wishes to acknowledge the financial support of the Engineering  and  Physical  Sciences  Research  Council [EP/L015234/1],  the  EPSRC  Centre  for  Doctoral  Training  in  Geometry  and  Number  Theory  (The  London School of Geometry and Number Theory), University College London and King's College London.
\end{acknowledgments}

\section{Bloch-Kato and Stark elements}\label{bk stark section}

In this section, we quickly review relevant aspects of the theory of Bloch-Kato elements from \cite{bss2} and also recall some basic facts concerning \'etale cohomology that will be useful in the sequel.

\subsection{Bloch-Kato elements}\label{review bk}

The motive $M$ discussed in the Introduction has the following realizations.
\begin{itemize}
\item For each prime number $p$, the $p$-adic \'etale realization $V_p(M)$: a finitely generated $R_p$-module endowed with a continuous $R_p$-linear action of $G_K:=\Gal(\overline \QQ/K)$.
\item For each embedding $\sigma \: K \hookrightarrow \CC$, the Hodge $\sigma$-realization $H_\sigma(M)$: a finitely generated $R$-module. 
\item The Betti realization $H_\B(M):=\bigoplus_{\sigma: K \hookrightarrow \CC} H_\sigma(M)$: a finitely generated $R$-module  endowed with an action of complex conjugation.
\end{itemize}

\begin{definition}
The $R$-rank $r(M/K,R)$ of $H_\B(M)^+$ is called the {\it basic rank} of the motive $(M/K,R)$. (This rank is, in general, a function $\Spec R \to \ZZ$, but we shall only consider examples in which it is constant.)
\end{definition}

We write $S_\infty(K)$ for the set of infinite places of $K$ and $S_\CC(K)$ for the subset of $S_\infty(K)$ of complex places.

\begin{example}\label{exrank}\

\begin{itemize}
\item[(i)] For any number field $K$ and integer $j$, we have
\[
r(h^0(K)(j)/K,\QQ)= \begin{cases}
 |S_\infty(K)| & \text{if $j$ is even},\\
|S_\CC(K)| & \text{if $j$ is odd}.
\end{cases}
\]
\item[(ii)] Let $L/K$ be a finite abelian extension with Galois group $G$. Assume $K$ is totally real and $L$ is CM and write $c$ for the complex conjugation in $G$. For an integer $j$ set
\[e_j^\pm :=\frac{1\pm (-1)^j c}{2} \in \QQ[G].\]
Then one has
\[r(h^0(L)(j)/K, \QQ[G]\varepsilon)= \begin{cases} [K:\QQ] &\text{ if $\varepsilon = e_j^+$,}\\
0 &\text{ if $\varepsilon = e_j^-$}.\end{cases}\]
\end{itemize}
\end{example}

Fix an odd prime number $p$, a Gorenstein $\ZZ$-order $\cR$ in $R$ and a $G_K$-stable lattice $T:=T_p(M)$ of $V:=V_p(M)$, which is free as an $\cR_p:=\ZZ_p\otimes_\ZZ \cR$-module.

Now we assume the following.

\begin{hypothesis}\label{hyp}
The $\cR_p$-module $Y_K(T):=\bigoplus_{v \in S_\infty(K)} H^0(K_v, T)$ is free.
\end{hypothesis}

\begin{remark}
Since the $R_p$-modules $\QQ_p \otimes_{\ZZ_p} Y_K(T)$ and $ \QQ_p \otimes_\QQ H_\B(M)^+$ are isomorphic, the rank of $Y_K(T)$ is equal to $r(M/K,R)$.
\end{remark}

We now set $r:=r(M/K,R)$ and fix a finite set $S $ of places of $K $ such that
\[S_\infty(K)\cup S_p(K) \cup S_{\rm ram}(T) \subset S,\]
where $S_p(K)$ denotes the set of $p$-adic places of $K$ and
$S_{\rm ram}(T)$ the set of places of $K$ at which $T$ is ramified.

Then, for each ordered $\cR_p$-basis $ b=\{b_1,\ldots,b_r\}$ of $Y_K(T)$, one can use the leading term of the $L$-function of $M$ at $s=0$ to define a canonical \textit{Bloch-Kato element}
\[
\eta_S^b (T) \in \CC_p \otimes_{\ZZ_p} {\bigwedge}_{\cR_p}^r H^1(\cO_{K,S},T^\ast(1)).
\]
(See \cite[Def.\@ 4.10]{bss2} for the precise definition.) Here $T^\ast(1):=\Hom_{\ZZ_p}(T,\ZZ_p(1))$ is the Kummer dual of $T$.

In order to study the integrality properties of Bloch-Kato elements we first recall that if $A$ is a commutative noetherian ring, then  for any non-negative integer $a$ the \textit{$a$-th exterior power bidual} of a finitely generated $A$-module $X$ is defined by
\[ {{\bigcap}}_A^a X:=\Hom_A\left({{\bigwedge}}_A^a \Hom_A(X,A), A\right).\]

We further recall that the canonical homomorphism of $A$-modules
\[
\xi_X^a \: {{\bigwedge}}_A^a X \to {{\bigcap}}_A^a X, \quad \ x \mapsto (\Phi \mapsto \Phi(x)),
\]
is bijective if $X$ is  projective or if both $a = 1$ and $X$ is reflexive, but is in general neither injective nor surjective.

In addition, if  $A = \mathcal{R}_p$, then \cite[Prop. A.7]{sbA} implies $\xi_X^a$ induces an isomorphism
\begin{equation}\label{rl=bidual rem}
\Big \{ x \in \QQ_p \otimes_{\ZZ_p}{\bigwedge}_{\mathcal{R}_p}^a  X \ \Big | \ \Phi(x) \in \mathcal{R}_p \text{ for all $\Phi \in {\bigwedge}_{\mathcal{R}_p}^a\Hom_{\cR_p}(X,\cR_p)$} \Big \} \stackrel{\sim}{\longrightarrow} {\bigcap}^a_{\mathcal{R}_p} X.
\end{equation}
In the sequel we use this map to regard   ${\bigcap}_{\mathcal{R}_p}^a X$ as a submodule of $\QQ_p \otimes_{\ZZ_p}{\bigwedge}_{\mathcal{R}_p}^a  X$.\medskip \\
We now consider the following hypothesis.

\begin{hypothesis}\label{hyptorsionfree}\
\begin{itemize}
\item[(i)] $H^0(K,T^\ast(1))=0$.
\item[(ii)] $H^1(\cO_{K,S},T^\ast(1))$ is $\ZZ_p$-free.
\end{itemize}
\end{hypothesis}

\begin{conjecture}
[Integrality Conjecture, {\cite[Conj.\@ 4.15]{bss2}}]\label{ic}
If Hypotheses \ref{hyp} and \ref{hyptorsionfree} are both valid, then one has
$$\eta_S^b(T) \in {\bigcap}_{\cR_p}^r H^1(\cO_{K,S},T^\ast(1)).$$
\end{conjecture}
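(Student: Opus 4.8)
The plan is to deduce the assertion — at least for motives of rank one — from the main conjecture of this paper together with the integrality of Rubin--Stark elements over suitable Iwasawa-theoretic towers, following the broad strategy used by Huber and Kings \cite{HK} for Tate motives over abelian extensions of $\QQ$. First I would reformulate the claim via the isomorphism \eqref{rl=bidual rem} for $A=\cR_p$ (here the hypothesis that $\cR$ is Gorenstein ensures that ${\bigcap}_{\cR_p}^r H^1(\cO_{K,S},T^\ast(1))$ is a well-behaved reflexive lattice): modulo the rationality statement that $\eta_S^b(T)$ in fact lies in $\QQ_p\otimes_{\ZZ_p}{\bigwedge}_{\cR_p}^r H^1(\cO_{K,S},T^\ast(1))$, it then suffices to prove that $\Phi(\eta_S^b(T))\in\cR_p$ for every $\Phi\in{\bigwedge}_{\cR_p}^r\Hom_{\cR_p}(H^1(\cO_{K,S},T^\ast(1)),\cR_p)$. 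Hypothesis \ref{hyptorsionfree}(ii) guarantees that such $\Phi$ detect integrality faithfully, while Hypothesis \ref{hyptorsionfree}(i), together with Hypothesis \ref{hyp}, ensures that $\eta_S^b(T)$ is unambiguously defined in the relevant exterior power.

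Next I would realise $\eta_S^b(T)$ as a Soul\'e--Stark element, that is, as a precise higher-rank Soul\'e twist of the Rubin--Stark elements attached to suitable auxiliary abelian extensions of $K$ and to the family of Tate twists of $M$; this is exactly the content of Conjecture \ref{conj}. The crucial structural point is that this twist is constructed from exterior power biduals rather than from exterior powers, so that it is defined, and respects integral structures, even in the presence of the torsion coefficient modules that appear after descent from an Iwasawa tower to finite level.

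I would then invoke the integrality (and rationality) of the Rubin--Stark elements over the relevant $\ZZ_p$-power extension, i.e. the Rubin--Stark Conjecture over the tower, which is available in a broad range of cases as a consequence of the higher-rank main conjecture of equivariant Iwasawa theory for $\GG_m$ recalled in Theorem \ref{classical-imc-result}. Combining this with the compatibility of the higher-rank Soul\'e twist with specialisation from the tower to finite level — once again the essential role of the bidual formalism — then yields the desired membership $\eta_S^b(T)\in{\bigcap}_{\cR_p}^r H^1(\cO_{K,S},T^\ast(1))$.

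The main obstacle is the identification of $\eta_S^b(T)$ with the Soul\'e--Stark element, namely Conjecture \ref{conj} itself: this demands a delicate comparison between the archimedean datum defining $\eta_S^b(T)$ — the leading term of the $L$-function of $M$ at $s=0$, together with the relevant period and regulator isomorphisms — and the $p$-adic interpolation datum carried by the family of Rubin--Stark elements. A secondary, but still substantial, obstacle is that the Iwasawa-theoretic input of the third step is itself known only under restrictions on $p$ and on the ramification of the auxiliary extensions, so that the resulting proof of the Integrality Conjecture is conditional in general; nonetheless, this approach is expected to be unconditional, and genuinely new, in the rank-one CM setting treated in \S\ref{BL-section}.
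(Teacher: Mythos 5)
There is a basic mismatch here: the statement you are asked to prove is Conjecture \ref{ic}, which the paper imports from \cite[Conj.\@ 4.15]{bss2} and leaves open; the paper contains no proof of it, and only records its known validity in special cases (abelian extensions of $\QQ$, and the Deligne--Ribet case) by citation. Your proposal is therefore not a proof but a conditional reduction, and the conditions you impose are strictly stronger open conjectures. Concretely: your second step is exactly Conjecture \ref{conj} (the Soul\'e--Stark Conjecture), which is the central new prediction of the paper and is unproven; and your third step already presupposes the statement to be proved for an entire auxiliary family, since the Soul\'e--Stark element $\beta_S^b(T)$ is only defined (Definition \ref{def soule stark}) under the assumption that Conjecture \ref{ic} holds for $\varepsilon\cdot\ZZ_{p,L_n/K}$ for every $n$, which by Remark \ref{RSremark} is the $p$-part of the Rubin--Stark Conjecture and is itself open. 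So the argument, even if every intermediate compatibility were checked, would establish only the implication ``Rubin--Stark over the tower $+$ ${\rm SS}(M/K,R)$ $\Rightarrow$ Conjecture \ref{ic} for $T$'' (which is essentially immediate, since $\beta_S^b(T)$ lies in ${\bigcap}_{\cR_p}^r H^1(\cO_{K,S},T^\ast(1))$ by construction), not the conjecture itself.

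Two further points would need repair even as a conditional statement. First, you invoke Theorem \ref{classical-imc-result} as supplying the integrality of Rubin--Stark elements over the tower ``as a consequence of the higher-rank main conjecture,'' but the logical direction in the paper is the reverse: Theorem \ref{classical-imc-result} \emph{assumes} the Rubin--Stark Conjecture for all abelian extensions of $K$ as a hypothesis and deduces (a form of) the main conjecture; it provides no independent route to the integrality input you need. Second, Conjecture \ref{ic} is formulated for an arbitrary motive $(M/K,R)$ satisfying Hypotheses \ref{hyp} and \ref{hyptorsionfree}, whereas your strategy, resting on the character $\chi_T$ and the twisting construction of \S\ref{soule-stark-section}, only makes sense for rank-one motives satisfying the additional Hypothesis \ref{hypsoule}; so even the conditional reduction covers only a special case of the statement.
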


\subsection{Stark elements}\label{review stark}

To discuss an important special case of the above conjecture we fix a finite abelian extension of number fields $L/K$ with Galois group $G$.

For each  $\ZZ_p \llbracket G_K \rrbracket$-module $X$ we set
\[ X_{L/K} := {\rm Ind}_{G_L}^{G_K}(X),\]
regarded as endowed with the natural action of $G\times G_K$.

We consider the Tate motive $(h^0(L)/K,\QQ[G])$. As an order of $\QQ[G]$ we take $\ZZ[G]$. Then the $p$-adic \'etale realization of this motive is $\QQ_{p,L/K}$ and contains the natural lattice $\ZZ_{p,L/K}$.

Suppose that all places in $S_\infty(K)$ split completely in $L$. Then $r:=r(h^0(L)/K,\QQ[G])$ is equal to $| S_\infty(K)|$ and Hypothesis \ref{hyp} is satisfied. Choosing an ordered basis $b$ of $Y_K(\ZZ_{p,L/K})$ is equivalent to choosing a labeling $v_1,\ldots,v_r$ of the infinite places of $K$ and a place $w_i$ of $L$ lying above each $v_i$. If we write
$$\eta_{L/K,S}^b \in \CC_p \otimes_{\ZZ_p} {\bigwedge}_{\ZZ_p[G]}^r H^1(\cO_{L,S},\ZZ_p(1))$$
for the Rubin-Stark element defined by this choice (see \cite[\S2A]{bks2},  for example), then
\begin{equation}\label{rs}\eta_S^b(\ZZ_{p,L/K})=\eta_{L/K,S}^b.\end{equation}

More generally, for an integer $j$ and an idempotent $\varepsilon \in \ZZ_p[G]$, let
\[\eta_{L/K,S}^\varepsilon(j) \in \CC_p\otimes_{\ZZ_p} {\bigwedge}_{\ZZ_p[G]}^{r}H^1(\cO_{L,S},\ZZ_p(1-j))\]
be the generalized Stark element defined in \cite[Def. 2.9]{bks2-2}, where $r:=r_j^\varepsilon$ is as in \cite[\S 2.1]{bks2-2}. Then we have
\[\eta_S^b(\varepsilon\cdot\ZZ_p(j)_{L/K})=\eta_{L/K,S}^\varepsilon(j),\]
where we take $b$ to be the dual of the basis in \cite[Lem.\@ 2.1]{bks2-2}.

\begin{remark}\label{sigma} Assume that $T=\varepsilon\cdot\ZZ_p(j)_{L/K}$ for some integer $j$. Then Hypothesis \ref{hyptorsionfree}\,(i) is satisfied unless $j=1$, in which case it is satisfied if and only if $\varepsilon$ belongs to the augmentation ideal of $\ZZ_p[G]$. Hypothesis \ref{hyptorsionfree}\,(ii) need not be satisfied in this case (even if $j \neq 1$) but this issue is easily avoided by choosing an auxiliary set $\Sigma$ of places of $K$ and using a notion of $\Sigma$-modified cohomology as in \cite{bks2-2} (where $\Sigma$ is denoted by $T$). In particular, when we consider examples in the sequel for which $T=\varepsilon\cdot\ZZ_p(j)_{L/K}$ and Hypothesis \ref{hyptorsionfree}\,(ii) is not satisfied, it should be understood that a set $\Sigma$ is implicitly used. \end{remark}

\begin{remark} If $T$ is as in Remark \ref{sigma}, then Conjecture \ref{ic} with $\cR_p = \ZZ_p[G]\varepsilon $ is known to be valid in each of the following cases:
\begin{itemize}
\item[$\bullet$] $L$ is an abelian extension of $\QQ$ (this is due to Kurihara and the second and third authors \cite[Thm.\@ 4.1]{bks2-2});
\item[$\bullet$] $K$ is totally real, $L$ is CM, $j\le 0$, $\varepsilon$ is the idempotent $e_j^-$ in Example \ref{exrank}\,(ii) (this is due to Deligne and Ribet (cf.\@ \cite[Ex.\@ 3.10\,(i)]{bks2-2})).
    \end{itemize}
\end{remark}

\begin{remark}\label{RSremark} After taking account of the final comment in Remark \ref{sigma}, the equality (\ref{rs}) combines with the identification (\ref{rl=bidual rem}) to imply that if $\cR_p = \ZZ_p[G]$ and $T = \ZZ_{p,L/K}$, then Conjecture \ref{ic} coincides with the $p$-component of the \textit{Rubin-Stark Conjecture} for $L/K$, as formulated by Rubin in \cite{rubinstark}.
\end{remark}

\subsection{Galois cohomology}\label{gal coh section}

In this section we recall some basic facts about compactly supported \'etale cohomology that will be useful in the sequel. 

For a commutative noetherian ring $\mathcal{A}$ we write $D(\mathcal{A})$ for the derived category of $\mathcal{A}$-modules.  

Let $N$ be a continuous $\ZZ_p\llbracket G_K\rrbracket$-module that has a commuting action of $\mathcal{A}$. Then, if $S$ is any finite set of places of $K$ that contains $S_\infty(K)$, $S_p(K)$ and all places at which $N$ is ramified one can regard $N$ as an \'etale pro-sheaf of $\mathcal{A}$-modules on ${\rm Spec}(\mathcal{O}_{K,S})$ and hence define its compactly supported \'etale cohomology complex $\mathrm{R}\Gamma_c(\mathcal{O}_{K,S}, N)$ (as discussed, for example, in \cite[\S 1.4]{sbA}). This complex defines an object of $D(\mathcal{A})$ that is well-defined up to unique isomorphism and so its shifted linear dual    
\[ C_{\mathcal{A}}(N) := \rhom_{\mathcal{A}}(\rgamma_c(\mathcal{O}_{K,S}, N), \mathcal{A}[-3]) \]
is also an object of $D(\mathcal{A})$ that is well-defined up to unique isomorphism.

In the following result we record some useful facts about this construction.

\begin{lemma}\label{compact lemma} Let $\mathcal{A}$ be a Gorenstein $\Z_p$-order and $N$ a finitely generated free $\mathcal{A}$-module that has a commuting continuous action of $G_K$ that is unramified outside a finite set of places $S$ of $K$ that contains both $S_\infty(K)$ and $S_p(K)$. Then the following claims are valid. 
\begin{enumerate}[label=(\roman*)]
\item $C_{\mathcal{A}}(N)$ is a perfect complex of $\mathcal{A}$-modules, acyclic outside degrees $0, 1$ and $2$ and its Euler characteristic in $K_0(\mathcal{A})$ vanishes.
\item Set $N^\ast (1) = \Hom_\mathcal{A} ( N, \mathcal{A} \otimes_{\Z_p} \Z_p (1))$. There are canonical isomorphisms of $\mathcal{A}$-modules
\[ H^{0}(C_{\mathcal{A}}(N)) \cong H^0(K,N^*(1))\,\,\text{ and }\,\, H^1(C_{\mathcal{A}}(N)) \cong H^1(\mathcal{O}_{K,S},N^*(1))\]
and a canonical exact sequence of ${\mathcal{A}}$-modules
\[ 0 \to H^2(\mathcal{O}_{K,S}, N^*(1)) \to H^2(C_{\mathcal{A}}(N)) \to Y_K(N) \to 0.\]
\item If Hypothesis \ref{hyptorsionfree} is satisfied, then $C_{\mathcal{A}}(N)$ is isomorphic in $D(\mathcal{A})$ to a complex of the form $P\to P$, where $P$ is a finitely generated free $\mathcal{A}$-module and the first term is placed in degree one. 
\item Let $\mathcal{B}$ be a ring that is either finite or a free $\Z_p$-module of finite rank. 
For any homomorphism of rings $\mathcal{A} \to \mathcal{B}$, there exists a canonical isomorphism in $D(\mathcal{B})$ of the form $\mathcal{B}\otimes_{\mathcal{A}}^\DL C_{\mathcal{A}}(N) \cong 
C_{\mathcal{B}}(\mathcal{B}\otimes_{\mathcal{A}}N)$. 
\end{enumerate}
\end{lemma}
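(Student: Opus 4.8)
The plan is to treat the four claims in order, reducing each to standard facts about compactly supported étale cohomology and Gorenstein orders. Throughout, I would exploit that $\mathcal{A}$ is Gorenstein of Krull dimension one, so that $\mathcal{A}$ is an injective $\mathcal{A}$-module, hence $\rhom_\mathcal{A}(-,\mathcal{A})$ is exact on finitely generated modules and preserves perfect complexes; and that $N$ is a \emph{free} $\mathcal{A}$-module, so that $N^\ast(1)$ is again free and the various base-change and duality manipulations below involve no higher Tor or Ext terms beyond those already present in the Poitou–Tate machinery.

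For \textbf{(i)}, I would recall that $\rgamma_c(\cO_{K,S},N)$ is a perfect complex of $\mathcal{A}$-modules concentrated in degrees $0,1,2,3$ (Artin–Verdier/Poitou–Tate duality together with the finiteness of $S$-cohomology, in the form recorded in \cite[\S1.4]{sbA}), and that its Euler characteristic is given by the local Euler characteristic formula. Applying $\rhom_\mathcal{A}(-,\mathcal{A}[-3])$ preserves perfectness (as $\mathcal{A}$ is Gorenstein) and sends degrees $0,1,2,3$ to degrees $3,2,1,0$; thus $C_\mathcal{A}(N)$ is perfect and acyclic outside degrees $0,1,2$. The Euler characteristic of $C_\mathcal{A}(N)$ is (up to sign) that of $\rgamma_c(\cO_{K,S},N)$, and the Euler characteristic formula, because $N$ is free over $\mathcal{A}$ and $K$ has no real place contributing beyond the $Y_K$-term, forces the class in $K_0(\mathcal{A})$ to vanish; alternatively one can simply quote \cite[\S1.4]{sbA} for the vanishing.

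For \textbf{(ii)}, the identifications come from Poitou–Tate duality applied to $\rgamma_c$. Concretely, $\rgamma_c(\cO_{K,S},N)$ sits in an exact triangle with $\rgamma(\cO_{K,S},N)$ and $\bigoplus_{v\in S}\rgamma(K_v,N)$; dualising (shifted by $[-3]$) and using local and global duality identifies $H^i(C_\mathcal{A}(N))$ with $H^i(\cO_{K,S},N^\ast(1))$ in degrees $0,1$ on the nose, while in degree $2$ the only surviving local contribution is from the infinite places, which after dualising produces exactly $Y_K(N)=\bigoplus_{v\in S_\infty(K)}H^0(K_v,N)$; assembling the long exact cohomology sequence of the dualised triangle yields the asserted four-term sequence in degree $2$. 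For \textbf{(iii)}, I would invoke (i) to write $C_\mathcal{A}(N)$ as a perfect complex concentrated in degrees $0,1,2$ with free terms; Hypothesis \ref{hyptorsionfree}(i) kills $H^0(C_\mathcal{A}(N))$ and (ii) shows $H^1(C_\mathcal{A}(N))=H^1(\cO_{K,S},N^\ast(1))$ is $\ZZ_p$-free, hence $\mathcal{A}$-torsion-free and (being a submodule of a finitely generated free module up to finite index, using that $\mathcal{A}$ is Gorenstein of dimension one) cohomologically trivial enough to allow the standard truncation argument: a perfect complex acyclic outside $\{1,2\}$ with $\ZZ_p$-free $H^1$ and vanishing Euler characteristic in $K_0(\mathcal{A})$ is represented by $P\to P$ with $P$ finitely generated free, the first $P$ in degree one. (Here the vanishing $H^0=0$ lets me shift from $\{0,1,2\}$ to $\{1,2\}$, and the $\ZZ_p$-freeness of $H^1$ together with perfectness upgrades the two free modules to a common $P$.) Finally \textbf{(iv)} is functoriality of $\rgamma_c$ under base change: one has the standard isomorphism $\mathcal{B}\otimes^\DL_\mathcal{A}\rgamma_c(\cO_{K,S},N)\cong\rgamma_c(\cO_{K,S},\mathcal{B}\otimes^\DL_\mathcal{A}N)$, and since $N$ is $\mathcal{A}$-free this is just $\rgamma_c(\cO_{K,S},\mathcal{B}\otimes_\mathcal{A}N)$; applying $\rhom(-,-[-3])$ and using that $\mathcal{B}$ is either finite or $\ZZ_p$-free of finite rank to commute derived tensor past the (shifted) linear dual — which is where one needs $\rhom_\mathcal{A}(\rgamma_c,\mathcal{A})\otimes^\DL_\mathcal{A}\mathcal{B}\cong\rhom_\mathcal{B}(\rgamma_c\otimes^\DL_\mathcal{A}\mathcal{B},\mathcal{B})$, valid because $\rgamma_c$ is perfect over $\mathcal{A}$ — yields the claimed isomorphism in $D(\mathcal{B})$.

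The main obstacle I anticipate is not any single deep input but the bookkeeping in \textbf{(ii)}: one must carefully track which local terms $\rgamma(K_v,N)$ survive after dualising and shifting, checking in particular that the $p$-adic and ramified finite places contribute nothing to $H^2(C_\mathcal{A}(N))$ beyond $H^2(\cO_{K,S},N^\ast(1))$ while the archimedean places contribute precisely $Y_K(N)$, and that all identifications are the \emph{canonical} ones (so that they are compatible with the base-change maps of (iv)). A secondary subtlety is ensuring in \textbf{(iii)} that the two free $\mathcal{A}$-modules appearing in a minimal representative can be taken equal: this uses the vanishing of the Euler characteristic from (i) together with the fact that over a Gorenstein (indeed any) one-dimensional order one can stabilise by adding a common free module, so the argument is standard but worth stating precisely.
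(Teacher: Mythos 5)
Your overall architecture is the same as the paper's, which simply cites \cite[Prop.~2.21]{sbA} for claims (i)--(iii) and, for (iv), combines base change for $\rgamma_c$ with the Hom--base-change isomorphism valid for perfect complexes (via \cite[Prop.~4.2]{Flach00}); your treatments of (ii) and (iv) are in line with this. However, there is a genuine flaw in how you deploy the Gorenstein hypothesis. A Gorenstein $\ZZ_p$-order is \emph{not} self-injective: it has self-injective dimension one (contrast the quotients $\cR/p^n$, which are self-injective and are used as such later in the paper), so $\rhom_{\mathcal{A}}(-,\mathcal{A})$ is not exact and your opening assertion cannot be used ``throughout''. What Gorenstein-ness actually buys is that $\Ext^i_{\mathcal{A}}(X,\mathcal{A})=0$ for $i\ge 2$ for every finitely generated $X$, and, since $\Hom_{\ZZ_p}(\mathcal{A},\ZZ_p)$ is an invertible $\mathcal{A}$-module, that $\Ext^i_{\mathcal{A}}(X,\mathcal{A})=0$ for all $i\ge 1$ whenever $X$ is $\ZZ_p$-free; hence a finitely generated $\ZZ_p$-free $\mathcal{A}$-module of finite projective dimension is projective. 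This is precisely the missing step in your claim (iii): after representing $C_{\mathcal{A}}(N)$ by finitely generated projectives in degrees at most $2$ and truncating, one obtains a two-term complex $M\to Q$ in degrees $1,2$ with $Q$ projective and $M$ of finite projective dimension and $\ZZ_p$-free (being an extension of a submodule of $Q$ by the $\ZZ_p$-free module $H^1$); the displayed fact then makes $M$ projective, and only afterwards does the vanishing Euler characteristic allow stabilisation to a common free module $P$. Your phrase ``cohomologically trivial enough'', as justified by the false self-injectivity, does not substitute for this argument.

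A second, smaller gap is in (i): dualising a perfect complex whose cohomology lies in degrees $[0,3]$ and shifting by $[-3]$ only shows that $C_{\mathcal{A}}(N)$ has cohomology in $[0,3]$; the vanishing of $H^3(C_{\mathcal{A}}(N))$ is not a matter of degree bookkeeping but comes out of the duality computation underlying (ii) (using that $p$ is odd, so archimedean cohomology in positive degrees vanishes), together with the injective-dimension-one property of $\mathcal{A}$. Likewise your justification of the vanishing of the Euler characteristic is too loose as stated (real places of $K$ do contribute, and one must use that $[H^0(K_v,N)]+[H^0(K_v,N^\ast(1))]=[N]$ in $K_0(\mathcal{A})$ for each archimedean $v$, which again uses that $p$ is odd and $N$ is free), though your fallback of quoting \cite[Prop.~2.21]{sbA} is of course what the paper itself does.
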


\begin{proof} Claims (i), (ii) and (iii) are proved in \cite[Prop.~2.21]{sbA} and we recall only that claims (i) and (iii) depend crucially on the fact that $N$ is free over the Gorenstein algebra $\mathcal{A}$, whilst the isomorphisms in claim (ii) follow directly from the Artin-Verdier Duality Theorem. The isomorphism in claim (iv) is obtained by combining the natural isomorphism in $D(\mathcal{B})$  
\[ \mathcal{B}\otimes_{\mathcal{A}}^\DL \DR\Hom_{\mathcal{A}}(\DR\Gamma_c(\mathcal{O}_{K,S}, N), \mathcal{A} [-3])\\
\cong \mathrm{R}\Hom_{\mathcal{B}}(\mathcal{B}\otimes_{\mathcal{A}}^\DL \DR\Gamma_c(\mathcal{O}_{K,S}, N), \mathcal{B}[-3]) \]
together with the canonical isomorphism 
\[ \mathcal{B}\otimes_{\mathcal{A}}^\DL \DR\Gamma_c(\mathcal{O}_{K,S}, N)\cong \DR\Gamma_c(\mathcal{O}_{K,S},\mathcal{B}\otimes_{\mathcal{A}} N)\]
(as described, for example, in \cite[Prop.\@ 4.2]{Flach00}). \end{proof}

\section{Congruence conjectures}\label{cong conj section}

\subsection{Rank-one motives}\label{rank one motive sec}

Following Deligne \cite[\S 2.4]{deligneconj}, we give the following definition.

\begin{definition}
A motive $M$ (or rather $(M/K,R)$) is of {\it rank one} if ${\rm rank}_R(H_\sigma(M))=1$ for any $\sigma: K \hookrightarrow \CC$ (or equivalently, ${\rm rank}_{R_p}(V_p(M))=1$ for any prime number $p$).
\end{definition}

\begin{example}\label{exrankone}\

\begin{itemize}
\item[(i)] The motive $(h^0(L)(j)/K, e_j^{\pm}\QQ[G])$ in Example \ref{exrank}\,(ii) is of rank one.
\item[(ii)] Let $K$ be an imaginary quadratic field (of class number one) and $E/K$ an elliptic curve with complex multiplication by $\cO_K$. Then the motive $(h^1(E)(1)/K, K)$ is of rank one. More generally, for any algebraic Hecke character $\varphi$ of $K$, one can consider the Hecke motive $(h(\varphi)/K,K)$, which is of rank one.
\item[(iii)] Let $A/K$ be an abelian variety with complex multiplication by an order of a CM field $F$. Then the motive $(h^1(A)(1)/K,F)$ is of rank one.
\end{itemize}
\end{example}

\begin{remark}
Deligne conjectured in \cite[Conj.\@ 8.1\,(iii)]{deligneconj} that every rank-one motive arises from an algebraic Hecke character.
\end{remark}

\begin{definition} \label{character-definition}
Let $M$ be a rank-one motive and $T:=T_p(M)$ be a stable lattice of $V_p(M)$. We define the {\it character associated with $T$} by the composition
\[\chi_T \: G_K \to {\rm Aut}_{\cR_p}(T) \cong \cR_p^\times,\]
where the last isomorphism follows from the fact that ${\rm rank}_{\cR_p}(T)=1$. For each natural number $n$ we similarly define a character
\[
\chi_{T,n} \: G_K \to {\rm Aut}_{\cR/p^n}(T/p^n) \cong (\cR/p^n)^\times.
\]
\end{definition}

\begin{example}
If $M=h^0(K)(1)$ and $T=\ZZ_p(1)$, then $\chi_T$ coincides with the cyclotomic character $\chi_{\rm cyc} \: G_K \to  \ZZ_p^\times.$\end{example}

\subsection{Congruences}\label{cong section}
Let $(M/K,R)$ be a rank-one motive. Fix $p$, $\cR$, $T:=T_p(M) \subset V_p(M)$, and $S$ as in \S\ref{review bk}. We also fix a finite abelian extension $L/K$ unramified outside $S$ with Galois group $G$ and a natural number $n$. In what follows, we always assume Hypothesis \ref{hyp}. Let $r=r(M/K,R)$ be the basic rank and fix an $\cR_p$-basis $b=\{b_1,\ldots,b_r\}$ of $Y_K(T)$.

\newpage
\begin{hypothesis}\label{hypcong}\
\begin{itemize}
\item[(i)] $L$ contains $\overline \QQ^{\ker (\chi_{T,n})}$, (or equivalently, the character $\chi_{T,n}\: G_K \to (\cR/p^n)^\times$ factors through the restriction map $G_K \to G$).
\item[(ii)] There exists an idempotent $\varepsilon \in \ZZ_{p}[G]$ such that
\begin{itemize}
\item[(a)] the ring homomorphism $\ZZ_p[G] \to \cR/p^n$ induced by $\chi_{T,n}$ sends $\varepsilon$ to the identity element of $\cR/p^n$;
\item[(b)] the $\ZZ_p[G]\varepsilon$-module $Y_K(\varepsilon\cdot\ZZ_{p,L/K})$ is free of rank $r$;
\item[(c)] $H^1(\cO_{K,S}, \varepsilon\cdot\ZZ_p(1)_{L/K}) $ is $\ZZ_p$-free.
\end{itemize}
\item[(iii)] There exists a $\ZZ_p[G]\varepsilon$-basis $w=\{w_1,\ldots,w_r\}$ of $Y_K(\varepsilon\cdot\ZZ_{p,L/K})$ such that the map
$$Y_K(\varepsilon\cdot\ZZ_{p,L/K}) \to Y_K(T/p^n)$$
induced by ${\rm tw}_{T,n}$ in Lemma \ref{twist lem} below sends $w$ to the image of $b$.
\end{itemize}
\end{hypothesis}

\begin{lemma}\label{twist lem}
Assume Hypothesis \ref{hypcong}(i) and (ii)(a). Then each choice of an $\cR_p$-basis of $T$ gives rise to an isomorphism
\begin{equation}\label{twist iso}\varepsilon\cdot\ZZ_{p,L/K} \otimes_{\ZZ_p[G], \chi_{T,n}^{-1}} \cR/p^n \cong T/p^n\end{equation}
of $\cR_p[G_K]$-modules, and hence also to a homomorphism
$${\rm tw}_{T,n} \: \varepsilon\cdot \ZZ_{p,L/K} \to T/p^n$$
of $\ZZ_p[ G_K]$-modules.
\end{lemma}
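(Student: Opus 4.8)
The plan is to unwind the definitions and exhibit the isomorphism \eqref{twist iso} as a base change of the identification defining $\chi_{T,n}$. First I would observe that, by Hypothesis \ref{hypcong}(i), the character $\chi_{T,n}\colon G_K \to (\cR/p^n)^\times$ factors through the quotient $G_K \twoheadrightarrow G$, so it makes sense to view $\cR/p^n$ as a $\ZZ_p[G]$-module via $\chi_{T,n}^{-1}$; here one uses (ii)(a), which guarantees that the ring homomorphism $\ZZ_p[G] \to \cR/p^n$ attached to $\chi_{T,n}$ kills $1-\varepsilon$, so that it factors through $\ZZ_p[G]\varepsilon$ and the base change $\varepsilon\cdot\ZZ_{p,L/K} \otimes_{\ZZ_p[G], \chi_{T,n}^{-1}} \cR/p^n$ is well defined. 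Next I would recall that $\ZZ_{p,L/K} = {\rm Ind}_{G_L}^{G_K}(\ZZ_p)$, so that $\varepsilon\cdot\ZZ_{p,L/K}$ is, as an $\cR_p[G_K]$-module (the $\cR_p$ acting through its quotient $\ZZ_p[G]\varepsilon$... more precisely through $\ZZ_p[G]$), the $G$-component cut out by $\varepsilon$; the key elementary fact is then the standard identification
\[
{\rm Ind}_{G_L}^{G_K}(\ZZ_p) \otimes_{\ZZ_p[G], \psi} \cR/p^n \;\cong\; (\cR/p^n)(\psi)
\]
for any ring homomorphism $\psi\colon \ZZ_p[G] \to \cR/p^n$, where the right-hand side denotes $\cR/p^n$ with $G_K$ acting through $\psi$ composed with the projection $G_K \to G$. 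Applying this with $\psi = \chi_{T,n}^{-1}$ (extended $\ZZ_p$-linearly) and noting $\varepsilon$ acts as the identity on the target by (ii)(a) gives the left-hand side of \eqref{twist iso} as $(\cR/p^n)(\chi_{T,n}^{-1})$ — wait, more care is needed with the twist.

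Here is the point that needs attention and is, I expect, the main obstacle: matching the Galois action. The module $T/p^n$ is, by Definition \ref{character-definition}, a free $\cR/p^n$-module of rank one on which $G_K$ acts through $\chi_{T,n}\colon G_K \to (\cR/p^n)^\times$; a choice of $\cR_p$-basis of $T$ reduces to a choice of $\cR/p^n$-basis of $T/p^n$ and thereby pins down this identification $T/p^n \cong (\cR/p^n)(\chi_{T,n})$ on the nose. On the other hand, in the induced module ${\rm Ind}_{G_L}^{G_K}(\ZZ_p)$ the Galois action and the $G$-action interact so that, after tensoring over $\ZZ_p[G]$ along a character $\psi$ of $G$, the residual $G_K$-action is through $\psi$ (viewed via $G_K \to G$), \emph{not} through $\psi^{-1}$ — the inverse in the statement of the lemma is precisely there to compensate for the convention in forming the induced module (equivalently, for whether one uses the left or right $\ZZ_p[G]$-module structure on ${\rm Ind}$). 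So I would carefully fix the conventions: write ${\rm Ind}_{G_L}^{G_K}(\ZZ_p) = \ZZ_p[G_K/G_L]$ with its left $G_K$-action and its commuting right $\ZZ_p[G]$-action by right translation, identify this with the left $\ZZ_p[G]$-module $\ZZ_p[G]$ equipped with the $G_K$-action $g\cdot x = \bar g x$ (where $\bar g$ is the image of $g$ in $G$), and then compute $\ZZ_p[G]\varepsilon \otimes_{\ZZ_p[G], \chi_{T,n}^{-1}} \cR/p^n$. The functor $-\otimes_{\ZZ_p[G], \chi_{T,n}^{-1}}\cR/p^n$ sends the generator $1$ of $\ZZ_p[G]\varepsilon$ to $1 \in \cR/p^n$, and sends $\bar g \cdot 1 = g \cdot 1$ to $\chi_{T,n}^{-1}(\bar g)\cdot 1 = \chi_{T,n}(\bar g)^{-1}$; but the $G_K$-action is \emph{left} multiplication by $\bar g$, which under the tensor becomes multiplication by $\chi_{T,n}^{-1}(\bar g)^{-1} = \chi_{T,n}(\bar g)$ acting on $\cR/p^n$. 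Hence the residual $G_K$-action on the tensor product is through $\chi_{T,n}$, matching $T/p^n$ exactly; one checks this identification is $\cR/p^n$-linear (the right $\cR_p$-action on $T$, hence on $T/p^n$, corresponds to multiplication in $\cR/p^n$ on the right-hand factor of the tensor product) and $G_K$-equivariant, giving \eqref{twist iso} as an isomorphism of $\cR_p[G_K]$-modules.

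Finally I would obtain ${\rm tw}_{T,n}$ by composing the inclusion of $\ZZ_p[G_K]$-modules
\[
\varepsilon\cdot\ZZ_{p,L/K} \longrightarrow \varepsilon\cdot\ZZ_{p,L/K} \otimes_{\ZZ_p[G], \chi_{T,n}^{-1}} \cR/p^n, \qquad x \longmapsto x \otimes 1,
\]
with the isomorphism \eqref{twist iso}; this is manifestly a homomorphism of $\ZZ_p[G_K]$-modules (it is even $\ZZ_p[G]$-linear if one transports the $\ZZ_p[G]$-action along $\chi_{T,n}^{-1}$), and its formation depends on the chosen $\cR_p$-basis of $T$ only through that dependence of \eqref{twist iso}. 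The only genuinely delicate point throughout is the bookkeeping of the two commuting actions in the induced module and the resulting inverse in the tensor-product character; the rest is formal manipulation of base change along ring homomorphisms, for which one may freely invoke that $\cR/p^n$ is a quotient of $\ZZ_p[G]\varepsilon$ via $\chi_{T,n}$ by Hypothesis \ref{hypcong}(ii)(a) together with (i).
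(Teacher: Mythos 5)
Your overall strategy is the same as the paper's: show that $\varepsilon\cdot\ZZ_{p,L/K}\otimes_{\ZZ_p[G],\chi_{T,n}^{-1}}\cR/p^n$ is a free $\cR/p^n$-module of rank one on which $G_K$ acts through $\chi_{T,n}$, note that $T/p^n$ has the same description, let a choice of $\cR_p$-basis of $T$ match up generators, and define ${\rm tw}_{T,n}$ as the composite with $x\mapsto x\otimes 1$. The paper simply asserts the first of these facts; you attempt to verify it explicitly, and it is precisely there that your argument breaks down. With the conventions you fix --- $\ZZ_{p,L/K}={\rm Ind}_{G_L}^{G_K}(\ZZ_p)=\ZZ_p[G_K/G_L]\cong\ZZ_p[G]$ with $G_K$ acting by $g\cdot x=\bar g x$ and the coefficient $G$-action given by (right) translation --- the two actions literally coincide, because $G$ is abelian and left and right translation on $\ZZ_p[G]$ are the same operator. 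Hence $g\cdot(1\otimes 1)=\bar g\otimes 1=1\otimes\chi_{T,n}(\bar g)^{-1}$, so under \emph{your} conventions the residual $G_K$-action on the tensor product is through $\chi_{T,n}^{-1}$, not $\chi_{T,n}$. The step ``left multiplication by $\bar g$ \dots becomes multiplication by $\chi_{T,n}^{-1}(\bar g)^{-1}$'' inserts an inversion that nothing in your setup produces, and it is exactly this unjustified inversion that delivers the desired answer; as written the computation is internally inconsistent.

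The correct resolution is that the induced module in this circle of papers carries the other standard convention: $X_{L/K}=X\otimes_{\ZZ_p}\ZZ_p[G]$ with $g(x\otimes\sigma)=gx\otimes\bar g^{-1}\sigma$ and with $G$ acting by multiplication on the second factor (equivalently, the $G$-module structure used for the base change differs from the $G_K$-translation by an inversion). With that convention one computes $g\cdot(1\otimes 1\otimes 1)=1\otimes\bar g^{-1}\otimes 1=\chi_{T,n}(\bar g)\,(1\otimes 1\otimes 1)$, so the twist along $\chi_{T,n}^{-1}$ genuinely yields the action via $\chi_{T,n}$, matching $T/p^n$; this is also forced by the later untwisted statement $\varepsilon\cdot\ZZ_p(1)_{L_\infty/K}\otimes_{\Lambda,\chi_T}\cR_p\cong T^\ast(1)$, where the same bookkeeping must produce the action $\chi_{\cyc}\chi_T^{-1}$ on the target (your convention would instead give $\chi_{\cyc}\chi_T$). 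Once the convention is set correctly, your verification goes through immediately, and the rest of your argument --- using Hypothesis \ref{hypcong}(ii)(a) to see the twisted module is free of rank one, fixing bases, and composing with $x\mapsto x\otimes 1$ --- coincides with the paper's proof.
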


\begin{proof} The given hypotheses imply that the tensor product $\varepsilon\cdot \ZZ_{p,L/K} \otimes_{\ZZ_p[G], \chi_{T,n}^{-1}} \cR/p^n$ is a free $\cR/p^n$-module of rank one upon which $G_K$ acts via the character $\chi_{T,n}$. Since $T/p^n$ is also a free $\cR/p^n$-module of rank one upon which $G_K$ acts via $\chi_{T,n}$, any choice of an $\cR_p$-basis of $T$ induces an isomorphism $ \varepsilon\cdot\ZZ_{p,L/K} \otimes_{\ZZ_p[G], \chi_{T,n}^{-1}} \cR/p^n \cong T/p^n$ of $\cR_p[G_K]$-modules and hence also a composite homomorphism of $\ZZ_p[ G_K]$-modules
$$ \varepsilon\cdot \ZZ_{p,L/K} \to \varepsilon\cdot\ZZ_{p,L/K} \otimes_{\ZZ_p[G], \chi_{T,n}^{-1}} \cR/p^n \cong T/p^n$$
of the required sort.
\end{proof}

\begin{example}
For the motive $(h^0(L)/K,e_j^\pm \QQ[G])$ considered in Example \ref{exrank}(ii), Hypothesis \ref{hypcong}(i) is satisfied if $\mu_{p^n}:=\{\zeta \in \overline \QQ^\times \mid \zeta^{p^n}=1\}\subset L$, and one can take $\varepsilon$ in Hypothesis \ref{hypcong}(ii) to be $\frac{1\pm c}{2}$. Hypothesis \ref{hypcong}(iii) is automatically satisfied in this case.
\end{example}

\begin{lemma}\label{twist modpn}
Assume Hypothesis \ref{hypcong}. Then each choice of $\cR_p$-basis of $T$ gives rise to a canonical homomorphism of $\ZZ_p[G]$-modules
$${\rm tw}_{T,n}^r \: {\bigcap}_{\ZZ_p[G]\varepsilon}^r H^1(\cO_{K,S}, \varepsilon\cdot\ZZ_p(1)_{L/K}) \to {\bigcap}_{\cR/p^n}^r H^1(\cO_{K,S}, T^\ast(1)/p^n),$$
where $G$ acts on the right hand module via $\chi_{T,n}$.
\end{lemma}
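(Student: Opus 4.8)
The plan is to construct $\mathrm{tw}_{T,n}^r$ by first producing a map on the level of exterior powers and then extending it to the exterior power biduals. First I would apply the functor ${\bigwedge}^r_{\ZZ_p[G]\varepsilon}(-)$ to the cohomology map
\[ H^1(\cO_{K,S},\varepsilon\cdot\ZZ_p(1)_{L/K}) \to H^1(\cO_{K,S},T^\ast(1)/p^n) \]
induced by the Kummer dual of $\mathrm{tw}_{T,n}$ from Lemma \ref{twist lem}. (Here one uses that, since $\mathrm{tw}_{T,n}$ factors through the isomorphism \eqref{twist iso}, the target cohomology group carries a natural $\ZZ_p[G]$-action through $\chi_{T,n}$ compatible with the $\cR/p^n$-action, so the map is $\ZZ_p[G]$-equivariant when the target is regarded via $\chi_{T,n}$.) Since $\cR/p^n$ is an $\cR_p$-algebra and base change along $\ZZ_p[G]\varepsilon \to \cR/p^n$ is right exact, this yields a $\ZZ_p[G]$-homomorphism ${\bigwedge}^r_{\ZZ_p[G]\varepsilon}H^1(\cO_{K,S},\varepsilon\cdot\ZZ_p(1)_{L/K}) \to {\bigwedge}^r_{\cR/p^n}H^1(\cO_{K,S},T^\ast(1)/p^n)$.

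The main work is then to show this passes to biduals, i.e. to the $\Hom(\Hom(-,A),A)$-double-duals. The natural approach is functoriality of the bidual: a homomorphism $f\colon X \to Y$ of finitely generated $A$-modules, together with a ring map $A \to B$, induces ${\bigcap}^r_A X \to {\bigcap}^r_B Y$ provided one has a compatible map ${\bigwedge}^r_B \Hom_B(Y,B) \to {\bigwedge}^r_A \Hom_A(X,A)$, which one gets by restriction of scalars along $A \to B$ from $\Hom_B(Y,B) \to \Hom_A(Y,A) \xrightarrow{\circ f} \Hom_A(X,A)$. Dualizing gives ${\bigcap}^r_A X = \Hom_A({\bigwedge}^r_A\Hom_A(X,A),A) \to \Hom_A({\bigwedge}^r_B\Hom_B(Y,B),A)$, and since $B$ is finite or $\ZZ_p$-free (so that the relevant $\Hom_A(B\text{-module},A)$ identifies, after localizing appropriately, with the $B$-linear dual — here one uses the Gorenstein hypothesis on $\cR$ and the $\ZZ_p$-freeness provided by Hypothesis \ref{hypcong}(ii)(c)) one identifies the right-hand side with ${\bigcap}^r_B Y$. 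I would spell this out in the special case $A = \ZZ_p[G]\varepsilon$, $B = \cR/p^n$, $X = H^1(\cO_{K,S},\varepsilon\cdot\ZZ_p(1)_{L/K})$, $Y = H^1(\cO_{K,S},T^\ast(1)/p^n)$, and check that the resulting map agrees with the exterior-power map under the canonical maps $\xi^r$ on the source side (where $\xi^r$ need not be an isomorphism, which is precisely why one works with biduals — the point of the construction is that $\mathrm{tw}_{T,n}^r$ is defined on all of the bidual, not just on the image of $\xi^r$).

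The step I expect to be the main obstacle is the compatibility/well-definedness bookkeeping: one must check that the bidual map is genuinely independent of auxiliary choices beyond the fixed $\cR_p$-basis of $T$, that the $\ZZ_p[G]$-equivariance (with the twisted action on the target) is respected throughout the dualization, and — most delicately — that the identification $\Hom_{\ZZ_p[G]\varepsilon}(\cR/p^n\text{-module}, \ZZ_p[G]\varepsilon) \cong$ the $\cR/p^n$-linear dual used to recover ${\bigcap}^r_{\cR/p^n}Y$ is canonical. This last point rests on the Gorenstein property of $\cR$ (hence of $\cR_p$ and of the quotients $\cR/p^n$) together with the freeness statements in Hypothesis \ref{hypcong}, exactly as in the proof of Lemma \ref{compact lemma}(i),(iii); once these identifications are in place the construction is formal, but assembling them correctly for torsion coefficients is the part that requires care. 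I would also remark that when $\xi^r$ happens to be bijective on the source (e.g. if $H^1(\cO_{K,S},\varepsilon\cdot\ZZ_p(1)_{L/K})$ is free over $\ZZ_p[G]\varepsilon$) the map $\mathrm{tw}_{T,n}^r$ is simply the exterior-power map, which serves as a consistency check.
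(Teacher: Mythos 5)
Your opening step (applying ${\bigwedge}^r$ to the map on $H^1$) is harmless, but the step you rely on to reach the full bidual --- ``functoriality of the bidual'' via dualizing the restriction map ${\bigwedge}^r_{B}\Hom_B(Y,B)\to{\bigwedge}^r_{A}\Hom_A(X,A)$ over $A=\ZZ_p[G]\varepsilon$ and then identifying $\Hom_A\bigl({\bigwedge}^r_B\Hom_B(Y,B),A\bigr)$ with ${\bigcap}^r_B Y=\Hom_B\bigl({\bigwedge}^r_B\Hom_B(Y,B),B\bigr)$, where $B=\cR/p^n$ --- does not work. Such an identification would require a $B$-module isomorphism $\Hom_A(B,A)\cong B$ (a relative Gorenstein condition on the ring map $A\to B$), and this fails here for an elementary reason: $A$ is $\ZZ_p$-free while every $B$-module is killed by $p^n$, so $\Hom_A(Z,A)=0$ for \emph{every} $B$-module $Z$ (already $\Hom_{\ZZ_p}(\ZZ/p^n,\ZZ_p)=0$). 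Hence the target of your dualized map is the zero module, and no appeal to the Gorenstein property of $\cR$ can recover ${\bigcap}^r_{\cR/p^n}Y$ from it: Gorensteinness makes $\cR/p^n$ self-injective over itself, but it does not supply a change-of-dualizing-ring identification along $\ZZ_p[G]\varepsilon\to\cR/p^n$. Since the whole point of the lemma is to define ${\rm tw}_{T,n}^r$ on all of ${\bigcap}^r_{\ZZ_p[G]\varepsilon}X$ (not just on the image of $\xi^r$), this is a genuine gap rather than a bookkeeping issue.

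The paper's proof circumvents exactly this problem by working with a presentation instead of with the cohomology modules alone. Hypothesis \ref{hypcong}(ii)(c), together with the vanishing of $H^0(K,\ZZ_p(1)_{L/K})$, gives Hypothesis \ref{hyptorsionfree} for $\varepsilon\cdot\ZZ_{p,L/K}$, so by Lemma \ref{compact lemma}(iii) and (iv) the relevant complexes are represented by a commutative square with rows $\theta\:P\to P$ and $\theta_n\:P_n\to P_n$, where $P$ is finitely generated free over $\mathcal{A}:=\ZZ_p[G]\varepsilon$, $P_n:=P\otimes_{\mathcal{A},\chi_{T,n}^{-1}}\cR/p^n$, and $X\cong\ker\theta$, $X_n\cong\ker\theta_n$. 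Because $\cR/p^n$ is self-injective, the result of \cite[Prop.\@ 2.4]{bss} identifies ${\bigcap}^r_{\cR/p^n}X_n$ with the set of $y\in{\bigwedge}^r_{\cR/p^n}P_n$ such that $\Phi(y)\in\ker\theta_n$ for all $\Phi\in{\bigwedge}^{r-1}_{\cR/p^n}\Hom_{\cR/p^n}(P_n,\cR/p^n)$; the map ${\rm tw}_{T,n}^r$ is then defined as the composite ${\bigcap}^r_{\mathcal{A}}X\to{\bigwedge}^r_{\mathcal{A}}P\to{\bigwedge}^r_{\cR/p^n}P_n$, and one checks that its image satisfies this condition (using that the $(r-1)$-fold functionals on $P_n$ are generated by reductions of functionals on the free module $P$, and the commutativity of the square), and finally that the construction is independent of the chosen presentation. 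The freeness of $P$ over $\mathcal{A}$ and the self-injectivity of $\cR/p^n$ are what replace the duality identification that is missing from your argument; any repair of your approach would essentially have to reintroduce these ingredients.
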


\begin{proof} We set $\mathcal{A} := \ZZ_p[G]\varepsilon$, $\cR_n := \cR/p^n$ and $T_n := T/p^n$ and abbreviate the complexes  $C_\mathcal{A}(\varepsilon\cdot\ZZ_{p,L/K})$ and $C_{\cR_n}(T_n)$ defined in \S\ref{gal coh section} to $C_{\varepsilon}$ and $C(T_n)$ respectively. Then by applying the general result of Lemma \ref{compact lemma}\,(iv) to the homomorphism 
$\mathcal{A} \to \cR_n$  induced by $\chi^{-1}_{T,n}$, we find that the isomorphism of $\cR_p[G_K]$-modules 
(\ref{twist iso}) (which depends on a choice of $\cR_p$-basis of $T$) induces a composite morphism 
\[ C_{\varepsilon} \to C_{\varepsilon}\otimes^{\DL}_{\mathcal{A}, \chi^{-1}_{T,n}}\cR_n  \cong C(T_n)\]
in $D(\mathcal{A})$ (in which the first arrow denotes the natural map). 

In addition, since $H^0(K,\ZZ_p(1)_{L/K})$ vanishes, the assumed validity of Hypothesis \ref{hypcong}(ii)(c) implies that the $G_K$-representation $\varepsilon\cdot \ZZ_{p,L/K}$ satisfies Hypothesis \ref{hyptorsionfree}. From Lemma \ref{compact lemma}(iii) it therefore follows that the above displayed morphism is represented by a commutative diagram of the form 
\begin{equation}\label{func diag} \begin{CD} P @> \theta >> P\\
@V\pi_n VV @VV \pi_n V\\
 P_n @> \theta_n >> P_n\end{CD}\end{equation}
in which $P$ is a finitely generated free $\mathcal{A}$-module, $P_n := P\otimes_{\mathcal{A}, \chi^{-1}_{T,n}}\cR_n$, $\theta_n := \theta\otimes {\rm id}$ and $\pi_n$ is the natural projection.

We now set $X:= H^1(\cO_{K,S}, \varepsilon\cdot\ZZ_p(1)_{L/K})$ and $X_n := H^1(\cO_{K,S}, T^\ast(1)/p^n)$. Then, by Lemma \ref{compact lemma}\,(ii), we can identify $X$ and $X_n$ with $\ker(\theta)$ and $\ker(\theta_n)$ respectively. In particular, since $\cR_n$ is self-injective, the general result of \cite[Prop.\@ 2.4]{bss} implies that the inclusion $X_n \subset P_n$ identifies ${\bigcap}_{\cR_n}^rX_n$ with the submodule of
 ${\bigcap}_{\cR_n}^rP_n = {\bigwedge}_{\cR_n}^rP_n$ comprising elements $y$ with the property that $\Phi(y) \in \ker(\theta_n)$ for all $\Phi$ in ${\bigwedge}_{\cR_n}^{r-1}\Hom_{\cR_n}(P_n,\cR_n)$.

To obtain a homomorphism ${\rm tw}_{T,n}^r$ of the required sort it is thus enough to show that the latter condition is satisfied by any element in the image of the composite homomorphism
\begin{equation}\label{pre-twist} {\bigcap}_{\mathcal{A}}^rX \xrightarrow{\iota} {\bigcap}_{\mathcal{A}}^rP = {\bigwedge}_{\mathcal{A}}^rP \to \left({\bigwedge}_{\mathcal{A}}^rP\right)\otimes_{\mathcal{A}, \chi^{-1}_{T,n}}\cR_n \cong {\bigwedge}_{\cR_n}^rP_n\end{equation}
in which $\iota$ is induced by the inclusion $X \subset P$ and the second arrow is the natural projection. Hence, given the commutativity of (\ref{func diag}), and the fact that ${\bigwedge}_{\cR_n}^{r-1}\Hom_{\cR_n}(P_n,\cR_n)$ is generated over $\cR_n$ by the image of the natural map
\[ {\bigwedge}_{\mathcal{A}}^{r-1}\Hom_{\mathcal{A}}(P,\mathcal{A}) \to \left({\bigwedge}_{\mathcal{A}}^{r-1}\Hom_{\mathcal{A}}(P,\mathcal{A})\right)
\otimes_{\mathcal{A}, \chi_{T,n}^{-1}}\cR_n = {\bigwedge}_{\cR_n}^{r-1}\Hom_{\cR_n}(P_n,\cR_n),\]
it is enough to prove $\theta(\Theta(\iota(x))) = 0$ for all $x\in {\bigcap}_{\mathcal{A}}^rX$ and $\Theta\in {\bigwedge}_{\mathcal{A}}^{r-1}\Hom_{\mathcal{A}}(P,\mathcal{A})$. But this is true since $\Theta(\iota(x)) = \Theta'(x)$ for an element $\Theta'$ of ${\bigwedge}_{\mathcal{A}}^{r-1}\Hom_{\mathcal{A}}(X,\mathcal{A})$ and any element of the latter group maps ${\bigcap}_{\mathcal{A}}^rX$ to ${\bigcap}_{\mathcal{A}}^1X = X = \ker(\theta)$.

This shows (\ref{pre-twist}) induces a map ${\bigcap}_{\mathcal{A}}^rX\to {\bigcap}_{\cR_n}^rX_n$ and it is straightforward to show that this construction is independent of the representative complex $P\xrightarrow{\theta}P$ fixed above. \end{proof}

Next we note that, since $H^1(\cO_{K,S}, T^\ast(1))$ is $\ZZ_p$-free (under Hypothesis \ref{hyptorsionfree}) and $\cR_p$ is Gorenstein, the argument of \cite[(11)]{bss} implies the existence of a natural homomorphism
$${\bigcap}_{\cR_p}^r H^1(\cO_{K,S},T^\ast(1))\to {\bigcap}_{\cR/p^n}^r H^1(\cO_{K,S}, T^\ast(1)/p^n); \ a \mapsto \overline a.$$

We can now formulate a precise congruence conjecture concerning Bloch-Kato elements.

\begin{conjecture}[Congruence Conjecture, {${\rm CC}(M/K,R,L,n)$}]\label{cc conj}
Assume Hypotheses \ref{hyp}, \ref{hyptorsionfree} and \ref{hypcong} and the validity of the Integrality Conjecture (Conjecture \ref{ic}) for both $\varepsilon\cdot \ZZ_{p,L/K}$ and $T$. Then in the finite module ${\bigcap}_{\cR/p^n}^r H^1(\cO_{K,S}, T^\ast(1)/p^n)$
 one has
$${\rm tw}_{T,n}^r (\eta_S^w(\varepsilon\cdot\ZZ_{p,L/K})) = \overline {\eta_S^b(T)}.$$
\end{conjecture}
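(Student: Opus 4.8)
The plan is to establish Conjecture \ref{cc conj} by transporting the defining property of the Bloch--Kato element $\eta_S^b(T)$ along the twisting isomorphism (\ref{twist iso}) and thereby reducing the assertion to a single congruence between leading terms of $L$-functions. Retain the notation $\mathcal{A} := \ZZ_p[G]\varepsilon$, $\cR_n := \cR/p^n$, $T_n := T/p^n$, $C_\varepsilon := C_\mathcal{A}(\varepsilon\cdot\ZZ_{p,L/K})$ and $C(T_n) := C_{\cR_n}(T_n)$ from the proof of Lemma \ref{twist modpn}. First, I would combine the base-change isomorphism of Lemma \ref{compact lemma}(iv), applied to the homomorphism $\mathcal{A}\to\cR_n$ induced by $\chi_{T,n}^{-1}$, with (\ref{twist iso}) to obtain the canonical isomorphism
\[
C_\varepsilon\otimes^{\DL}_{\mathcal{A},\chi_{T,n}^{-1}}\cR_n\;\cong\;C(T_n)
\]
in $D(\cR_n)$, and then pass to the associated graded determinant (equivalently, fundamental line) modules. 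Since $\mathcal{A}$ and $\cR_n$ are Gorenstein and the cohomology groups occurring are reflexive, this identifies the mod-$p^n$ reduction, twisted by $\chi_{T,n}^{-1}$, of the fundamental line of $\varepsilon\cdot\ZZ_{p,L/K}$ with the fundamental line of $T_n$; by construction this identification restricts on exterior-power biduals of $H^1$ to the homomorphism ${\rm tw}_{T,n}^r$ built in Lemma \ref{twist modpn}, and, by Hypothesis \ref{hypcong}(iii), it carries the image of the basis $w$ to that of $b$.

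Second, I would record that the canonical comparison ("period--regulator") isomorphism used in \cite[Def.\@ 4.10]{bss2} to define $\eta_S^b(-)$ is functorial for the base change of the first step: its archimedean and $p$-adic Hodge-theoretic ingredients are each obtained by extension of scalars, so the identification above is compatible with these comparison maps. Invoking the Integrality Conjecture (Conjecture \ref{ic}) for both $T$ and $\varepsilon\cdot\ZZ_{p,L/K}$ — which places $\eta_S^b(T)$ and $\eta_S^w(\varepsilon\cdot\ZZ_{p,L/K})$ in the respective integral bidual lattices, so that $\overline{\eta_S^b(T)}$ and ${\rm tw}_{T,n}^r(\eta_S^w(\varepsilon\cdot\ZZ_{p,L/K}))$ are both well-defined elements of the single finite module ${\bigcap}_{\cR/p^n}^r H^1(\cO_{K,S},T^\ast(1)/p^n)$ — the conjectured equality then becomes equivalent to the assertion that the mod-$p^n$ reduction of the ($p$-adically normalised) leading term at $s=0$ of the $L$-function of $M$ coincides, under the identification of fundamental lines above, with the $\chi_{T,n}$-twist, reduced mod $p^n$, of the $G$-equivariant leading term at $s=0$ of the $L$-function of $L/K$ that defines the Rubin--Stark element $\eta_S^w(\varepsilon\cdot\ZZ_{p,L/K})$ (cf.\@ (\ref{rs}) and its idempotent variant).

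The hard part is precisely this last congruence, and it is not formal. For a general rank-one motive the $L$-function $L(M,s)$ is a genuinely different archimedean object from the Artin $L$-functions of the characters of $G$ — for instance, when $M$ is attached to a CM abelian variety it is a Hecke $L$-function with non-trivial infinity type — so the required identity encodes a $p$-adic interpolation property of a $p$-adic $L$-function attached to the family $\{\chi_{T,n}\}_n$, together with a comparison of the Deligne period of $M$ with the $p$-adic periods implicit in $\chi_{T,n}$. I would therefore prove it by exhibiting such a $p$-adic $L$-function and invoking its defining interpolation formula at the point corresponding to $M$ and at the trivial twist; this input is available unconditionally in the cases of principal interest — the Deligne--Ribet $p$-adic $L$-function when $T$ is a Tate motive $\varepsilon\cdot\ZZ_p(j)_{L/K}$, and the Katz $p$-adic $L$-function in the CM case — while in general it is exactly the arithmetic content that must be supplied, which is why Conjecture \ref{cc conj} is stated conditionally. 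I expect the period bookkeeping in this final step, rather than the determinant calculus of the first two steps, to account for essentially all of the difficulty.
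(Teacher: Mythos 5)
There is a fundamental problem here: the statement you are asked to prove is Conjecture \ref{cc conj}, and the paper does not prove it --- it formulates it as an open conjecture, shows in \S\ref{ss section} that various special cases are equivalent to, or implied by, known results and existing conjectures (the Deligne--Ribet interpolation property, the Beilinson/Huber--Wildeshaus/Kato results for $K=\QQ$, the $p$-adic Beilinson conjecture of \cite{BBJR}, Solomon's conjecture, and the explicit reciprocity conjecture of B\"uy\"ukboduk--Lei), and uses it (via its limit form, the Soul\'e--Stark Conjecture \ref{conj}) as a hypothesis in Theorem \ref{descent}. Your proposal, read carefully, is also not a proof: your first two steps (base change of $C_\mathcal{A}(\varepsilon\cdot\ZZ_{p,L/K})$ along $\chi_{T,n}^{-1}$ via Lemma \ref{compact lemma}\,(iv), compatibility of this with ${\rm tw}_{T,n}^r$ and with the basis $w\mapsto b$) merely reconstruct machinery the paper already sets up in Lemmas \ref{twist lem}, \ref{twist modpn} and \ref{comm}, and your third step openly defers the entire arithmetic content --- the congruence between the mod-$p^n$ reduction of the leading term of $L(M,s)$ and the $\chi_{T,n}$-twisted Rubin--Stark data --- to ``exhibiting a $p$-adic $L$-function and invoking its interpolation formula.'' That deferred congruence \emph{is} the conjecture; exhibiting such an interpolation property relating Bloch--Kato elements of a general rank-one motive to Rubin--Stark elements is exactly what is not available (and in the CM case it amounts to the explicit reciprocity law of Proposition \ref{soule-stark-ab-varieties}, itself conjectural). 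So you have reduced the statement to an equivalent unproven assertion, not proved it.

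A secondary gap: even your claimed reduction to ``a single congruence of leading terms under an identification of fundamental lines'' is not formal at finite level. The element $\eta_S^w(\varepsilon\cdot\ZZ_{p,L/K})$ is not known to be the image under $\pi$-type maps of a basis of ${\det}^{-1}$ of the relevant complex --- that assertion is essentially the (equivariant Tamagawa Number / Iwasawa Main) conjecture itself, which is why Theorem \ref{descent} must assume Conjecture \ref{IMCgeneral} in addition to the Soul\'e--Stark Conjecture to deduce the eTNC. Without such an input, identifying the two fundamental lines mod $p^n$ does not convert the equality of the two cohomological elements into an equality of $L$-values; the comparison of period--regulator isomorphisms across the twist (your ``functoriality'' claim in step two) is precisely where the transcendental periods of $M$ and the cyclotomic/CM periods of $\mathbb{G}_m$ must be compared $p$-adically, and no argument is given. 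In short: the approach correctly locates the difficulty but supplies no new idea to overcome it, and no proof of Conjecture \ref{cc conj} exists in the paper to compare against.
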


\begin{remark}\label{remark symbol}
Conjecture \ref{conj} is formulated for the data $(M/K,R,p,\cR, T, S,b, L, n)$. Since the data $(p,\cR,T,S,b)$ is often fixed when $(M/K,R)$ is given, we indicate the conjecture by the symbol ${\rm CC}(M/K,R,L,n)$.
\end{remark}

\begin{remark}
 Since $\eta_S^w(\varepsilon \cdot \ZZ_{p,L/K})$ is the $\varepsilon$-component of the Rubin-Stark element (see \S \ref{review stark}), Conjecture ${\rm CC}(M/K,R,L,n)$ predicts a precise congruence relation between Rubin-Stark elements and the Bloch-Kato elements for $M$. More generally, for another motive $M'$, it is possible to formulate a congruence between Bloch-Kato elements for $M'$ and $M \otimes M'$.
\end{remark}

\subsection{Soul\'e-Stark elements} \label{soule-stark-section}
  To formulate a `limit version' of the Congruence Conjecture we fix a rank-one motive $(M/K,R)$ and data $p,\cR,T,S, r, b$ be as in \S\ref{cong section}. (We do not fix $L$ and $n$ in this subsection.) We also define fields 
$$L_n:=\overline \QQ^{\ker(\chi_{T,n})}\,\,\text{ and }\,\, L_\infty:=\bigcup_n L_n =\overline \QQ^{\ker(\chi_{T})} ,$$
and use the associated algebras 
$$\Lambda_n:=\ZZ_p[\Gal(L_n/K)]\,\, \text{ and }\,\, \Lambda:=\ZZ_p\llbracket \Gal(L_\infty/K)\rrbracket.$$
For each  $\ZZ_p \llbracket G_K \rrbracket$-module $X$ we write $X_{L_\infty/K}$ for the inverse limit $\varprojlim_n
 X_{L_n/K}$, where the transition morphisms are the natural projection maps 
\[ X_{L_{n+1}/K} \cong \Lambda_{n+1}\otimes_{\Z_p}X \to \Lambda_{n}\otimes_{\Z_p}X \cong X_{L_{n}/K}.\]

\begin{hypothesis}\label{hypsoule}\
\begin{itemize}
\item[(i)] There exists an idempotent $\varepsilon \in \Lambda$ such that
\begin{itemize}
\item[(a)] the ring homomorphism $\!\Lambda\!\to\!\! \cR_p$ induced by $\!\chi_T$ sends $\varepsilon\!$ to the identity element of $\cR_p$,
\item[(b)] the $\Lambda \varepsilon$-module $Y_K(\varepsilon\cdot \ZZ_{p,L_\infty/K})$ is free of rank $r$,
\item[(c)] $H^1(\cO_{K,S},\varepsilon\cdot \ZZ_p(1)_{L_n/K})$ is $\ZZ_p$-free for every $n$.
\end{itemize}
\item[(ii)] There exists a $\Lambda\varepsilon$-basis $w=\{w_1,\ldots,w_r\}$ of $Y_K(\varepsilon\cdot \ZZ_{p,L_\infty/K})$ such that the map
$$Y_K(\varepsilon\cdot \ZZ_{p,L_\infty/K}) \to Y_K(T)$$
induced by ${\rm tw}_T$ in Lemma \ref{lemtwist2} below sends $w$ to $b$.
\end{itemize}
\end{hypothesis}

\begin{lemma}\label{lemtwist2}
Assume Hypothesis \ref{hypsoule}\,(i)\,(a). Then there is an isomorphism
$$ \varepsilon\cdot \ZZ_{p,L_\infty/K} \otimes_{\Lambda, \chi_T^{-1}} \cR_p \cong T.$$
In particular, there is a natural map ${\rm tw}_T: \varepsilon\cdot \ZZ_{p,L_\infty/K} \to T.$ \end{lemma}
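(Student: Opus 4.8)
The strategy is to obtain the displayed isomorphism by passing to the inverse limit of the corresponding finite-level isomorphisms of Lemma \ref{twist lem}, and then to deduce the map $\mathrm{tw}_T$ by composing with the natural projection. First I would fix an $\cR_p$-basis of $T$ and recall that, by Definition \ref{character-definition}, the $G_K$-action on $T$ is given by the character $\chi_T\colon G_K \to \cR_p^\times$, and that $\chi_{T,n}$ is the reduction of $\chi_T$ modulo $p^n$. By Hypothesis \ref{hypsoule}\,(i)\,(a) the ring map $\Lambda \to \cR_p$ induced by $\chi_T$ sends $\varepsilon$ to $1$, and composing with the projection $\cR_p \to \cR/p^n$ shows the induced map $\Lambda_n \to \cR/p^n$ sends the image of $\varepsilon$ to $1$; thus the hypothesis \ref{hypcong}\,(i) and \ref{hypcong}\,(ii)\,(a) needed to invoke Lemma \ref{twist lem} hold at each finite level $n$ (with $L = L_n$ and $G = \Gal(L_n/K)$, noting $L_n = \overline{\QQ}^{\ker(\chi_{T,n})}$ contains $\overline{\QQ}^{\ker(\chi_{T,n})}$ tautologically).

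Next I would establish the finite-level identification. For each $n$, the chosen $\cR_p$-basis of $T$ reduces to an $\cR/p^n$-basis of $T/p^n$, and Lemma \ref{twist lem} supplies an isomorphism of $\cR_p[G_K]$-modules
\[
\varepsilon\cdot\ZZ_{p,L_n/K} \otimes_{\Lambda_n,\chi_{T,n}^{-1}} \cR/p^n \;\cong\; T/p^n .
\]
The key point is the base-change identity
\[
\varepsilon\cdot\ZZ_{p,L_\infty/K} \otimes_{\Lambda,\chi_T^{-1}}\cR_p \;\cong\; \varprojlim_n\Bigl(\varepsilon\cdot\ZZ_{p,L_n/K}\otimes_{\Lambda_n,\chi_{T,n}^{-1}}\cR/p^n\Bigr),
\]
which I would justify as follows: by definition $\varepsilon\cdot\ZZ_{p,L_\infty/K} = \varprojlim_n \varepsilon\cdot\ZZ_{p,L_n/K}$ with the natural projection transition maps, and since $\varepsilon\cdot\ZZ_{p,L_n/K} \cong \Lambda_n\varepsilon\otimes_{\ZZ_p}\ZZ_p(1)$ is a free $\Lambda_n\varepsilon$-module of finite rank (rank one up to the Tate twist), tensoring along $\chi_{T,n}^{-1}\colon \Lambda_n \to \cR/p^n$ gives a free $\cR/p^n$-module of rank one and the transition maps are compatible with the tower $\cR_p = \varprojlim_n \cR/p^n$. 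Taking the inverse limit of the finite-level isomorphisms — which are compatible by construction, since they are all induced by reduction of the single fixed $\cR_p$-basis of $T$ — yields the desired isomorphism $\varepsilon\cdot\ZZ_{p,L_\infty/K}\otimes_{\Lambda,\chi_T^{-1}}\cR_p \cong \varprojlim_n T/p^n = T$ of $\cR_p[G_K]$-modules.

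Finally, the map $\mathrm{tw}_T$ is obtained exactly as in Lemma \ref{twist lem}: compose the natural base-change map $\varepsilon\cdot\ZZ_{p,L_\infty/K} \to \varepsilon\cdot\ZZ_{p,L_\infty/K}\otimes_{\Lambda,\chi_T^{-1}}\cR_p$ with the isomorphism just constructed to get a homomorphism of $\ZZ_p[G_K]$-modules $\mathrm{tw}_T\colon \varepsilon\cdot\ZZ_{p,L_\infty/K}\to T$. I expect the main (though modest) obstacle to be the bookkeeping needed to verify that the finite-level isomorphisms of Lemma \ref{twist lem} can be chosen compatibly with the transition maps in the tower — i.e. that a single choice of $\cR_p$-basis of $T$ induces a coherent system — and that inverse limit commutes with the relevant tensor products here; both reduce to the freeness statements already built into Hypothesis \ref{hypsoule}\,(i) and the fact that each $\varepsilon\cdot\ZZ_{p,L_n/K}$ is a finitely generated free $\Lambda_n\varepsilon$-module, so no genuine difficulty arises.
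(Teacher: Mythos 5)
Your argument is correct, but it is not the route the paper takes: the paper proves Lemma \ref{lemtwist2} by simply repeating the argument of Lemma \ref{twist lem} directly at the infinite level --- under Hypothesis \ref{hypsoule}\,(i)\,(a) the module $\varepsilon\cdot \ZZ_{p,L_\infty/K} \otimes_{\Lambda, \chi_T^{-1}} \cR_p$ is a free rank-one $\cR_p$-module on which $G_K$ acts via $\chi_T$, as is $T$ by Definition \ref{character-definition}, so a choice of $\cR_p$-basis of $T$ yields the isomorphism, and ${\rm tw}_T$ is the composite with the natural base-change map. You instead assemble the isomorphism as an inverse limit of the finite-level isomorphisms of Lemma \ref{twist lem}, which indeed apply at every layer $L_n$: the image of $\varepsilon$ in $\Lambda_n$ verifies Hypothesis \ref{hypcong}\,(ii)\,(a) because $\chi_{T,n}$ is the reduction of $\chi_T$, and \ref{hypcong}\,(i) is tautological for $L=L_n$ (this is exactly the remark in the paper that Hypothesis \ref{hypsoule} implies Hypothesis \ref{hypcong} for every $L_n$). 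Your limit argument goes through: the base-change identity holds since $\varepsilon\cdot\ZZ_{p,L_\infty/K}\otimes_\Lambda \Lambda_n \cong \varepsilon\cdot\ZZ_{p,L_n/K}$, so the level-$n$ tensor product is the reduction mod $p^n$ of $\varepsilon\cdot\ZZ_{p,L_\infty/K}\otimes_{\Lambda,\chi_T^{-1}}\cR_p$, and the latter is a finitely generated $\cR_p$-module, hence $p$-adically complete and equal to the limit of these reductions; the finite-level isomorphisms are compatible because they are all normalised by the one fixed $\cR_p$-basis of $T$. The paper's route is shorter; yours buys an explicit compatibility of ${\rm tw}_T$ with the maps ${\rm tw}_{T,n}$, which is what is implicitly used later when ${\rm tw}_T^r$ is defined as $\varprojlim_n {\rm tw}_{T,n}^r$. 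One cosmetic slip: $\varepsilon\cdot\ZZ_{p,L_n/K}$ is isomorphic to $\Lambda_n\varepsilon$ itself; there is no Tate twist here (the twist enters only for $\ZZ_p(1)_{L_n/K}$), though this does not affect the freeness statement you need.
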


\begin{proof}
This is proved in the same way as Lemma \ref{twist lem}.
\end{proof}

\begin{remark}
Hypothesis \ref{hypsoule} implies Hypothesis \ref{hypcong} for $L=L_n$ for every $n$.
\end{remark}

\begin{definition}\label{def soule stark}
Assume Hypotheses \ref{hyp}, \ref{hyptorsionfree} and \ref{hypsoule} and that the Integrality Conjecture (Conjecture \ref{ic}) is valid for $\varepsilon\cdot\ZZ_{p,L_n/K}$ for every $n$. We define a {\it rank $r$ Soul\'e twist} to be the map
\begin{eqnarray*}
{\rm tw}_T^r:=\varprojlim_n {\rm tw}_{T,n}^r: \varprojlim_n \varepsilon\cdot {\bigcap}_{\Lambda_n}^r H^1(\cO_{K,S}, \ZZ_p(1)_{L_n/K}) &\longrightarrow & \varprojlim_n {\bigcap}_{\cR/p^n}^r H^1(\cO_{K,S},T^\ast(1)/p^n)\\
&\cong & {\bigcap}_{\cR_p}^r H^1(\cO_{K,S},T^\ast(1)),
\end{eqnarray*}
where the isomorphism is by \cite[Lem.~2.4]{ghaleo}. We define the {\it Soul\'e-Stark element} for $(M/K,R)$ by setting
\begin{eqnarray*}
\beta_S^b(T):= {\rm tw}_{T}^r \left(\varprojlim_n \eta_S^w(\varepsilon \cdot\ZZ_{p,L_n/K}) \right) \in {\bigcap}_{\cR_p}^r H^1(\cO_{K,S},T^\ast(1)).
\end{eqnarray*}
One checks that this element is independent of the choice of $w$ in Hypothesis \ref{hypsoule}\,(ii).
\end{definition}

\begin{example}\label{exsoule}\

\begin{enumerate}[label=(\roman*)]
\item Let $f$ be a positive integer divisible by $p$ and write $\zeta_f$ for a primitive $f$-th root of unity. We set $G:=\Gal(\QQ(\zeta_f)/\QQ).$ The Soul\'e-Stark element for the motive $(h^0(\QQ(\zeta_f))(j)/\QQ,e_j^+ \QQ[G])$ is the Deligne-Soul\'e cyclotomic element
$$e_j^+ c_{1-j}(\zeta_f) \in H^1(\QQ(\zeta_f),\ZZ_p(1-j))$$
(see \cite[Def.\@ 3.1.2]{HK}). 
\item Let $L/K$ be a finite abelian extension with Galois group $G$, and assume that $K$ is totally real and $L $ is CM. Then for any integer $j\neq 1$ the Soul\'e-Stark element for the motive $(h^0(L)(j)/K, e_j^-\QQ[G])$ is the image of the Stickelberger element $\theta_{L_\infty/K,S}(0)$ (see \cite[\S4B]{bks2}) under the map
$$\Lambda=\ZZ_p \llbracket \Gal(L_\infty/K)\rrbracket \to \ZZ_p[G]; \ \sigma \mapsto \chi_{\rm cyc}(\sigma)^j \overline \sigma,$$
where $\overline \sigma \in G$ denotes the image of $\sigma \in \Gal(L_\infty/K)$.
(Here we implicitly choose $\Sigma$ in Remark \ref{sigma} so that $\theta_{L_\infty/K,S}(0)$ lies in $\Lambda$.)
\end{enumerate}
\end{example}

We conjecture that the Soul\'e-Stark element coincides with the Bloch-Kato element.

\begin{conjecture}[Soul\'e-Stark Conjecture, {${\rm SS}(M/K,R)$}]\label{conj}
$$\eta_S^b(T)=\beta_S^b(T).$$
\end{conjecture}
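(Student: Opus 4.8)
\textbf{The plan} is to prove the identity modulo $p^n$ for every $n$ and then pass to the limit. Since both $\eta_S^b(T)$ and $\beta_S^b(T)$ lie in $\bigcap_{\cR_p}^r H^1(\cO_{K,S},T^\ast(1))$, and the natural map from this module to $\varprojlim_n \bigcap_{\cR/p^n}^r H^1(\cO_{K,S},T^\ast(1)/p^n)$ is the isomorphism used in Definition \ref{def soule stark} (namely \cite[Lem.~2.4]{ghaleo}), it suffices to show that the two elements have the same image in $\bigcap_{\cR/p^n}^r H^1(\cO_{K,S},T^\ast(1)/p^n)$ for every $n$. But $\beta_S^b(T)$ is by construction the inverse limit of the elements $\mathrm{tw}_{T,n}^r(\eta_S^w(\varepsilon\cdot\ZZ_{p,L_n/K}))$, while $\eta_S^b(T)$ maps to $\overline{\eta_S^b(T)}$, so this reduced assertion is precisely the Congruence Conjecture $\mathrm{CC}(M/K,R,L_n,n)$ of Conjecture \ref{cc conj}, applied to the tower $(L_n)_n$ attached to $\chi_T$. (Hypothesis \ref{hypsoule} was arranged exactly so that Hypothesis \ref{hypcong} holds for every $L_n$ and that a compatible family of bases $w$ exists.) The first, purely formal, step thus reduces $\mathrm{SS}(M/K,R)$ to establishing each $\mathrm{CC}(M/K,R,L_n,n)$.

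\textbf{Proving the congruences.} To prove a given $\mathrm{CC}(M/K,R,L,n)$, i.e.\ the equality $\mathrm{tw}_{T,n}^r(\eta_S^w(\varepsilon\cdot\ZZ_{p,L/K})) = \overline{\eta_S^b(T)}$, I would compare the two Bloch-Kato elements through their definitions in \cite[Def.~4.10]{bss2}. Each is manufactured from (a) the leading coefficient at $s=0$ of an $L$-function — that of the $\varepsilon$-twist of $h^0(L)$ on the left, that of $M$ on the right — and (b) the period and regulator comparison isomorphisms between the Betti, de Rham and $p$-adic realizations. By Lemma \ref{twist lem} the map $\mathrm{tw}_{T,n}$ is induced by specialising $\varepsilon\cdot\ZZ_{p,L/K}$ along $\chi_{T,n}^{-1}$, so I would match the two recipes ingredient by ingredient. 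On the analytic side one must check that this specialisation carries the relevant leading term of $L(h^0(L),s)$ to that of $L(M,s)$ modulo $p^n$: for $T = \varepsilon\cdot\ZZ_p(j)_{L/K}$ this is the interpolation property of the Deligne-Ribet $p$-adic $L$-function recalled after Remark \ref{sigma}, and for a general rank-one $M$ it should follow, granting Deligne's conjecture \cite[Conj.~8.1]{deligneconj} that $M$ arises from an algebraic Hecke character, from Artin formalism. On the $p$-adic side one must check that $\mathrm{tw}_{T,n}^r$ — built from the Galois-descent isomorphism of Lemma \ref{compact lemma}\,(iv) and the self-injectivity argument in the proof of Lemma \ref{twist modpn} — is compatible with those comparison isomorphisms, i.e.\ with the relevant Perrin-Riou big-exponential and Coleman maps. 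The assumed validity of the Integrality Conjecture (Conjecture \ref{ic}) for both $T$ and $\varepsilon\cdot\ZZ_{p,L/K}$ is what makes $\mathrm{tw}_{T,n}^r$ defined on integral classes, so that the congruence is even meaningful.

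\textbf{The main obstacle} will be the $p$-adic compatibility just mentioned: identifying the purely cohomological operation $\mathrm{tw}_{T,n}^r$, which lives entirely in \'etale cohomology with torsion coefficients and whose construction forces the use of exterior-power biduals rather than ordinary exterior powers, with the transcendental, $p$-adic Hodge-theoretic recipe defining the Bloch-Kato element. This is an explicit reciprocity law: for $\mathbb{G}_m$ it is the law for Rubin-Stark elements conjectured by Solomon \cite{solomon}, and in the CM abelian-variety case it is the explicit reciprocity conjecture of B\"uy\"ukboduk-Lei \cite{BL1, BL2}; both remain open in general. One can hope to attack it via Perrin-Riou's big exponential map and the theory of Coleman maps, but controlling these constructions within the higher-rank bidual formalism, uniformly over the tower $(L_n)_n$, is exactly where the essential difficulty lies. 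Granting this input for the tower attached to $\chi_T$, the formal reduction above yields $\mathrm{SS}(M/K,R)$, whereupon Theorem \ref{descent}, together with the cases of the main conjecture of higher-rank equivariant Iwasawa theory for $\mathbb{G}_m$ established in Theorem \ref{classical-imc-result}, delivers the equivariant Tamagawa Number Conjecture for $M$.
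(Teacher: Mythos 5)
You have not proved anything here, and indeed you could not have: the statement you were given is Conjecture \ref{conj} itself, which the paper formulates as an open conjecture and never proves. The paper only supplies evidence in special cases (Proposition \ref{evi}: $j\le 0$ via Deligne--Ribet interpolation, $K=\QQ$ via Beilinson, Huber--Wildeshaus and Kato, etc.) and reinterpretations (Proposition \ref{soule-stark-ab-varieties} shows the conjecture is \emph{equivalent} to an explicit reciprocity law in the CM abelian variety case). Your first, formal step --- reducing ${\rm SS}(M/K,R)$ to the congruences ${\rm CC}(M/K,R,L_n,n)$ for all $n$ via the isomorphism of \cite[Lem.~2.4]{ghaleo} --- is sound and is consistent with how the paper sets things up (it remarks that ${\rm SS}$ implies the ${\rm CC}(M/K,R,L_n,n)$; the converse reduction you describe is the intended relationship, granted the compatibility of the reduction maps). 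But this step transfers, rather than removes, the difficulty: the congruences are exactly as deep as the conjecture itself.

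The genuine gap is that your second and third paragraphs replace a proof by an appeal to statements that are themselves open and, in the cases the paper analyses, are \emph{equivalent to} (or implied by) the very conjecture in question. The ``$p$-adic compatibility'' you identify is Solomon's conjectural reciprocity law for $\GG_m$ (Proposition \ref{evi}\,(iv) shows ${\rm CC}$ \emph{implies} it, so invoking it is circular in spirit) and the B\"uy\"ukboduk--Lei explicit reciprocity conjecture in the CM abelian variety case (which \S\ref{exp rec BL} shows ${\rm SS}$ refines); ``granting this input'' is granting the conclusion. Your analytic step is also too optimistic: matching the leading term of $L(h^0(L),s)$ specialised along $\chi_{T,n}^{-1}$ with that of $L(M,s)$ modulo $p^n$ is not Artin formalism (even assuming Deligne's Hecke-character conjecture); it is precisely the existence and interpolation property of a $p$-adic $L$-function for $M$ (Deligne--Ribet, Katz, \dots), which is known only in the special cases the paper lists. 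So what you have written is a reasonable strategy document --- essentially the paper's own programme, culminating in Theorem \ref{descent} --- but it is not, and cannot currently be, a proof of Conjecture \ref{conj}.
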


\begin{remark}
Conjecture \ref{conj} is formulated for the data $(M/K,R,p,\cR, T, S,b)$. We omit the data $(p,\cR,T,S,b)$ for the same reason as Remark \ref{remark symbol}.
\end{remark}

\begin{remark}
Conjecture ${\rm SS}(M/K,R)$ implies Conjecture ${\rm CC}(M/K,R,L_n,n)$ for all $n$.
\end{remark}

\subsection{Connection to the eTNC}

In this section we first review the main conjecture of higher-rank Iwasawa theory for $\GG_m$ that is formulated by Kurihara and the first and third authors in \cite{bks2}. We then explain how this conjecture can be combined with the Soul\'e-Stark Conjecture to give a precise generalization to rank-one motives of the main strategy used by Huber and Kings in \cite{HK} to prove the Tamagawa Number Conjecture for Tate motives over abelian extensions of $\QQ$.

\subsubsection{The higher-rank main conjecture} 

We assume for the moment that $L_\infty$ is any Galois extension of $K$ for which $\Gal(L_\infty / K)$ is isomorphic to $\Z_p^d \times \Delta$ for some natural number $d$ and finite abelian group $\Delta$ and, in addition, no finite place of $K$ splits completely in $L_\infty$. 

We write $\Omega(L_\infty)$ for the set of finite extensions of $K$ in $L_\infty$. 
 We set $\Lambda := \ZZ_p\llbracket\Gal(L_\infty/K)\rrbracket$ and $\Lambda_F := \ZZ_p[\Gal(F/K)]$ for each $F$ in $\Omega(L_\infty)$. We also fix an idempotent $\varepsilon$ of $\Lambda$ that validates the conditions (i)\,(b) and (i)\,(c) in Hypothesis \ref{hypsoule}. We write $Q(\Lambda)$ for the total quotient ring of $\Lambda$. 
 
Now, since no finite place of $K$ splits completely in $L_\infty$, the 
$\Lambda$-module $H^2(\cO_{K,S},\ZZ_p(1)_{L_\infty/K})$ is torsion. This fact combines with the results of Lemma \ref{compact lemma}\,(i) and (ii) (with $\mathcal{A} = \Lambda_F\varepsilon$ and $N = \varepsilon\cdot \ZZ_{p,F/K}$ for each $F$ in $\Omega(L_\infty)$) and the assumed validity of Hypothesis \ref{hypsoule}\,(i)\,(b) to imply that the complex $Q(\Lambda)\varepsilon\otimes_{\Lambda}\rgamma(\cO_{K,S},\ZZ_p(1)_{L_\infty/K})$ is acyclic outside degree one and that its cohomology in degree one is free of rank $r$. Hence, under the present hypotheses, there exists a canonical isomorphism of $Q(\Lambda)$-modules 
\[  Q(\Lambda)\varepsilon\otimes_{\Lambda} {\det}_\Lambda^{-1}(\rgamma(\cO_{K,S},\ZZ_p(1)_{L_\infty/K})) \cong  Q(\Lambda) \varepsilon \otimes_\Lambda {\bigwedge}_\Lambda^r H^1(\cO_{K,S},\ZZ_p(1)_{L_\infty/K}).\]

We further note that, by the argument in \cite[Thm.~3.4\,(1) and Lem.~3.5]{bks2}, for any $\Lambda \varepsilon$-order $\mathfrak{A}$ in $Q(\Lambda) \varepsilon$, the above isomorphism restricts to give an 
injective homomorphism of $\mathfrak{A}$-modules of the form 
\begin{multline*}\pi_\infty \:  \mathfrak{A} \otimes_{\Lambda}  {\det}_\Lambda^{-1}(\rgamma(\cO_{K,S},\ZZ_p(1)_{L_\infty/K})) \to \mathfrak{A} \otimes_{\Lambda}  \varprojlim_{F\in \Omega(L_\infty)} {\bigcap}_{\Lambda_F}^r H^1(\cO_{K,S}, \ZZ_p(1)_{F/K}).
\end{multline*}

Setting   
\[ \eta_{L_\infty/K,S}^w := (\eta_S^w(\varepsilon\cdot\ZZ_{p,F/K}))_{F\in \Omega(L_\infty)}\in \varprojlim_{F\in \Omega(L_\infty)}\CC_p\otimes_{\ZZ_p}\varepsilon\cdot {\bigwedge}_{\Lambda_F}^r H^1(\cO_{K,S}, \ZZ_p(1)_{F/K}),\]
we can now recall the formulation of the $\varepsilon$-component of the Iwasawa Main Conjecture for $\GG_m$ for $(L_\infty/K,S)$. (In fact, since we do not assume that $d=1$, the formulation we give here is actually slightly more general than in loc. cit.)

\begin{conjecture}[{Kurihara et al.\@ \cite[Conj.\@ 3.1]{bks2}}] \label{IMCgeneral} 
Assume the above hypotheses and fix a $\Lambda\varepsilon$-order $\mathfrak{A}$ in $Q(\Lambda)\varepsilon$.  Then there exists an $\mathfrak{A}$-basis 
\[ \fz_{L_\infty/K,S}^w\in  \mathfrak{A} \otimes_{\Lambda} {\det}_\Lambda^{-1}(\rgamma(\cO_{K,S},\ZZ_p(1)_{L_\infty/K}))\]
for which one has 
\[
\pi_\infty ( \fz_{L_\infty/K,S}^w) = \eta_{L_\infty/K,S}^w.
\]
\end{conjecture}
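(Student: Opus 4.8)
The plan is to deduce the conjecture from two ingredients: the integrality of the Rubin--Stark Euler system $(\eta_S^w(\varepsilon\cdot\ZZ_{p,F/K}))_{F\in\Omega(L_\infty)}$ (equivalently, the Rubin--Stark Conjecture over the fields in $\Omega(L_\infty)$), and a main-conjecture-type equality of ideals for the Iwasawa module $\varepsilon\cdot H^2(\cO_{K,S},\ZZ_p(1)_{L_\infty/K})$, to be extracted from the higher-rank Euler system (`Kolyvagin derivative') machinery of \cite{bss}. First I would unwind the statement. Since $\pi_\infty$ is injective and $\mathfrak{A}\otimes_\Lambda{\det}_\Lambda^{-1}(\rgamma(\cO_{K,S},\ZZ_p(1)_{L_\infty/K}))$ is free of rank one over $\mathfrak{A}$, the existence of a basis $\fz_{L_\infty/K,S}^w$ with $\pi_\infty(\fz_{L_\infty/K,S}^w)=\eta_{L_\infty/K,S}^w$ is equivalent to (a) $\eta_{L_\infty/K,S}^w\in\im(\pi_\infty)$, together with (b) $\eta_{L_\infty/K,S}^w$ being an $\mathfrak{A}$-generator of $\im(\pi_\infty)$. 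Using the two-term presentation of $\varepsilon\cdot\rgamma(\cO_{K,S},\ZZ_p(1)_{F/K})$ from Lemma \ref{compact lemma}(i) and (iii) at each finite level $F$, and passing to the limit, one can describe $\im(\pi_\infty)$ as an explicit bidual sublattice of $\varprojlim_F\bigcap_{\Lambda_F\varepsilon}^r\varepsilon\cdot H^1(\cO_{K,S},\ZZ_p(1)_{F/K})$ whose index in $\bigcap_{\mathfrak{A}}^r\varepsilon\cdot H^1(\cO_{K,S},\ZZ_p(1)_{L_\infty/K})$ is measured by $\Fitt^0_{\mathfrak{A}}(\varepsilon\cdot H^2(\cO_{K,S},\ZZ_p(1)_{L_\infty/K}))$. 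In these terms (b) is the assertion that the $\mathfrak{A}$-ideal cut out by $\eta_{L_\infty/K,S}^w$ equals this Fitting ideal, and (a) is the one-sided divisibility $\Fitt^0_{\mathfrak{A}}(\varepsilon\cdot H^2) \mid (\eta_{L_\infty/K,S}^w)$ in a suitable sense.

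For (a) I would invoke the Rubin--Stark Conjecture for each $F\in\Omega(L_\infty)$ --- a theorem when $K=\QQ$ by \cite{bks2-2} and in the CM abelian cases relevant to the later sections --- combined with the bidual identification \eqref{rl=bidual rem} (cf.\ Remark \ref{RSremark}); these exactly encode that each $\eta_S^w(\varepsilon\cdot\ZZ_{p,F/K})$ lies in $\bigcap_{\Lambda_F\varepsilon}^r H^1(\cO_{K,S},\ZZ_p(1)_{F/K})$, and hence that $\eta_{L_\infty/K,S}^w\in\im(\pi_\infty)$.

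The substantive point is (b). Here I would run the Rubin--Stark Euler system through the higher-rank Kolyvagin-derivative formalism of \cite{bss}: verify that the relevant core rank equals $r$ (a local calculation at the places above $p$ for the $\ZZ_p(1)$-twists in play), construct by a Chebotarev argument the associated family of derivative classes over the fields $F(\mu_{p^m})$ twisted by Kolyvagin primes, and read off from their non-vanishing an upper bound for $\Fitt^0_{\mathfrak{A}}(\varepsilon\cdot H^2(\cO_{K,S},\ZZ_p(1)_{L_\infty/K}))$ in terms of the ideal generated by $\eta_{L_\infty/K,S}^w$. This is the several-variable analogue of the argument behind Theorem \ref{classical-imc-result}. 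Matching it with the complementary divisibility --- known when $K=\QQ$, where it is essentially the equivariant cyclotomic main conjecture in the form used in \cite{bks2-2} --- would give the equality of ideals, hence (b).

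The principal obstacle is step (b), in two respects. First, the Euler-system bound has to be matched \emph{exactly}, not merely up to a bounded index: because $\mathfrak{A}$ is only an order in the possibly non-regular, non-semisimple ring $Q(\Lambda)\varepsilon$ and $L_\infty/K$ may involve several independent $\Z_p$-extensions, the usual passage from a divisibility of characteristic ideals to the generation statement (b) requires a Gorenstein/Fitting-ideal refinement together with careful control of the pseudo-null error terms and trivial zeros introduced by the Kolyvagin-derivative construction. Second, the scheme is contingent on the Rubin--Stark Conjecture holding over every $F\in\Omega(L_\infty)$, so that the Euler system genuinely takes values in the required integral lattices; this restricts the unconditional range of the result. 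For $K=\QQ$ one can sidestep the Euler-system input and obtain the conjecture by descent from the results of \cite{bks2-2} and \cite{sbA}, but for a general base field the approach above seems to be the natural one.
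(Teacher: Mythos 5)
The statement you are trying to prove is stated in the paper as a \emph{conjecture} (it is \cite[Conj.\@ 3.1]{bks2}), and the paper offers no proof of it: what it establishes is only partial evidence, namely Theorems \ref{classical-imc-result} and \ref{det-imc-result}, which are conditional on the Rubin-Stark Conjecture for all finite abelian extensions of $K$ and on the character-level Hypothesis \ref{bss-hypothesis}, and which yield the conclusion of Conjecture \ref{IMCgeneral} only for one specific order $\mathfrak{A}$, namely the multiplier order $\{x \in Q(\Lambda) \mid x \cdot \Fitt^0_\Lambda(\Cl(L_\infty)_\chi) \subseteq \Fitt^0_\Lambda(\Cl(L_\infty)_\chi)\}$, which is contained in $\Lambda[\tfrac1p]$ but is known to equal $\Lambda$ only under extra conditions (no $p$-torsion in $G$, or vanishing of the $\mu$-invariant). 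Your proposal is therefore not a proof but a conditional strategy, and its conditional points are exactly where the open content of the conjecture lies.

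Concretely, your reduction to (a) integrality and (b) the statement that $\eta^w_{L_\infty/K,S}$ generates $\im(\pi_\infty)$ over $\mathfrak{A}$ is reasonable and parallels what the paper does, and (a) does follow from Rubin-Stark plus the limit/bidual identifications (this is how $\eta^{b,\chi}_{L_\infty/K,S}$ is constructed in Theorem \ref{classical-imc-result}\,(i)). But step (b) cannot be completed by the route you describe. The higher-rank Kolyvagin-derivative machinery of \cite{bss}, \cite{bss3} (even granting Rubin-Stark everywhere and Hypothesis \ref{bss-hypothesis}) controls the ideal $\im(\eta^{b,\chi}_{L_\infty/K,S})$ against $\Fitt^0_\Lambda(\Cl(L_\infty)_\chi)$ only up to pseudo-null error, i.e.\ after localisation at primes of $\Lambda$ of height at most one; this is precisely why the paper's Theorem \ref{classical-imc-result}\,(ii) is phrased via reflexive hulls and height-one localisations, and why Theorem \ref{det-imc-result} must enlarge $\Lambda\varepsilon$ to the multiplier order $\mathfrak{A}$ rather than proving the conjecture for $\mathfrak{A}=\Lambda\varepsilon$. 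The ``complementary divisibility'' you invoke to upgrade the Euler-system bound to an exact equality of $\mathfrak{A}$-ideals is not available for a general base field $K$ (for $K=\QQ$ the full statement is indeed known, but by entirely different means), and no argument is given for removing the pseudo-null ambiguity or for handling character components excluded by Hypothesis \ref{bss-hypothesis} (e.g.\ the trivial and Teichm\"uller components, places with $\chi(v)=1$, and the $p$-part of $G$). So as written the proposal establishes, at best, the same conditional and order-restricted evidence as the paper, not the conjecture itself.
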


\subsubsection{The strategy of Huber and Kings} We shall now prove the following result. 

\begin{theorem}\label{descent} Fix a field $L_\infty = \bigcup_{n}L_n$ as in \S\ref{soule-stark-section}. 
Assume that $\Gal(L_\infty/K)$ is isomorphic to $\ZZ_p^d \times \Delta$ for some natural number $d$ and finite abelian group $\Delta$ and that no finite place in $S$ splits completely in $L_\infty$. Assume also that Hypotheses \ref{hyp}, \ref{hyptorsionfree} and \ref{hypsoule} are valid and that, for every $n$, the Integrality Conjecture (Conjecture \ref{ic}) is valid for the data $\varepsilon\cdot\ZZ_{p,L_n/K}$. Then the equivariant Tamagawa Number Conjecture for the pair $(M,\cR_p)$ is valid whenever the following three conditions are satisfied: 
\begin{enumerate}[label=(\alph*)]
\item The higher-rank Iwasawa Main Conjecture for $\GG_m$ (Conjecture \ref{IMCgeneral}) is valid with $\mathfrak{A} = \Lambda\varepsilon$; 
\item The Soul\'e-Stark Conjecture ${\rm SS}(M/K,R)$ is valid; 
\item $H^2(\cO_{K,S}, V^\ast(1))$ vanishes. 
\end{enumerate}
\end{theorem}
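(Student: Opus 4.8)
The proof is a descent argument: one starts from the Iwasawa-theoretic statement (condition (a)), which concerns the characteristic element $\fz_{L_\infty/K,S}^w$ in $\det_\Lambda^{-1}(\rgamma(\cO_{K,S},\ZZ_p(1)_{L_\infty/K}))$, and specialises it along the ring map $\Lambda\varepsilon \to \cR_p$ induced by $\chi_T^{-1}$ to obtain the statement of the eTNC for $(M,\cR_p)$. The three conditions are precisely what is needed to make each stage of this specialisation go through. First I would use Lemma \ref{compact lemma}\,(iv), applied to the homomorphism $\Lambda\varepsilon \to \cR_p$ of Lemma \ref{lemtwist2}, to identify $\cR_p \otimes^{\DL}_{\Lambda\varepsilon,\chi_T^{-1}} \rgamma(\cO_{K,S},\ZZ_p(1)_{L_\infty/K})$ with $\rgamma(\cO_{K,S},V^\ast(1)\text{-lattice})$, i.e. with $C_{\cR_p}(\varepsilon\cdot\ZZ_{p,L_\infty/K}\otimes\cR_p) \cong C_{\cR_p}(T)$; passing to $\det^{-1}$ and using that the formation of determinants commutes with derived base change, the image of $\fz_{L_\infty/K,S}^w$ under this specialisation is a $\cR_p$-basis of $\det_{\cR_p}^{-1}(\rgamma(\cO_{K,S},V^\ast(1)))$.

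**The two key inputs.** The image of the left-hand side is controlled by condition (c): the vanishing of $H^2(\cO_{K,S},V^\ast(1))$ guarantees, via Hypothesis \ref{hyptorsionfree} and Lemma \ref{compact lemma}\,(iii), that the relevant complex is quasi-isomorphic to a two-term complex $P\to P$ of free $\cR_p$-modules with $H^1 = H^1(\cO_{K,S},V^\ast(1))$ and $H^2 = 0$ after inverting nothing, so that the canonical trivialisation identifies $\det_{\cR_p}^{-1}(\rgamma(\cO_{K,S},V^\ast(1)))$ with ${\bigcap}_{\cR_p}^r H^1(\cO_{K,S},T^\ast(1))$ inside $\CC_p\otimes{\bigwedge}^r$, exactly as in the construction of $\pi_\infty$ but now over $\cR_p$ rather than over $\mathfrak{A}$. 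Under this identification, the equality $\pi_\infty(\fz_{L_\infty/K,S}^w) = \eta_{L_\infty/K,S}^w$ of Conjecture \ref{IMCgeneral} specialises — compatibly with the maps ${\rm tw}_{T,n}^r$ of Lemma \ref{twist modpn} and the passage to the limit in Definition \ref{def soule stark} — to the statement that the image of the specialised $\fz$ is the Soul\'e-Stark element $\beta_S^b(T)$. Here one must check that the composite $\mathfrak{A}\otimes_\Lambda \det^{-1}(\cdots) \to \mathfrak{A}\otimes_\Lambda \varprojlim_F {\bigcap}^r$ of the excerpt is compatible, after the base change to $\cR_p$, with the limit of twist maps; this is a diagram-chase using Lemma \ref{compact lemma}\,(iv) at each finite level $L_n$ together with the compatibility of the trivialisations, and it is the technical heart of the argument.

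**Conclusion of the descent.** Having shown that the specialisation of $\fz_{L_\infty/K,S}^w$ is a $\cR_p$-basis of $\det_{\cR_p}^{-1}(\rgamma(\cO_{K,S},V^\ast(1)))$ whose image under the canonical trivialisation equals $\beta_S^b(T)$, I would then invoke condition (b), the Soul\'e-Stark Conjecture, to replace $\beta_S^b(T)$ by the Bloch-Kato element $\eta_S^b(T)$. Since $\eta_S^b(T)$ is defined using the leading term of the $L$-function of $M$ at $s=0$, the resulting statement — that the lattice $\det_{\cR_p}^{-1}(\rgamma(\cO_{K,S},V^\ast(1)))$ is generated by an element whose image under the period-regulator comparison isomorphism is the leading term — is precisely the formulation of the eTNC for $(M,\cR_p)$; one only has to match the normalisations in \cite{bss2} (where $\eta_S^b(T)$ is defined) with those in the statement of the eTNC, which is routine given that $\eta_S^b(T)$ was constructed for exactly this purpose. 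Finally, one records that conditions (c) together with Hypothesis \ref{hyptorsionfree} ensures $H^0$ and $H^2$ of the relevant complex vanish so that no correction terms from Euler characteristics intervene.

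**Main obstacle.** The hard part will be the compatibility in the middle paragraph: one has to track the element $\fz_{L_\infty/K,S}^w$ through the non-commutative diagram relating $\det^{-1}$ of the Iwasawa complex, its base changes to the finite levels $\Lambda_{L_n}\varepsilon$, the exterior-power-bidual maps $\pi_{L_n}$ at each level, the twist maps ${\rm tw}_{T,n}^r$ of Lemma \ref{twist modpn}, and the inverse limit of Definition \ref{def soule stark} — and verify that the two ways of getting from the top-left to the bottom-right agree. This requires care because the biduals ${\bigcap}^r$ do not behave as nicely under base change as honest exterior powers, which is precisely why Lemma \ref{twist modpn} had to be proved by hand; the descent has to be organised so that one only ever applies base change to the \emph{free} modules $P_n$ (where ${\bigcap}^r = {\bigwedge}^r$) and then cuts down to the biduals of the cohomology at the very end, mirroring the structure of the proof of Lemma \ref{twist modpn}. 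Everything else — the identification of complexes via Lemma \ref{compact lemma}\,(iv), the trivialisation of determinants under Hypothesis \ref{hyptorsionfree}, and the final comparison with the $L$-value — is essentially bookkeeping.
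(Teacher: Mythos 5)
Your proposal is correct and follows essentially the same route as the paper: descend the basis $\fz_{L_\infty/K,S}^w$ provided by Conjecture \ref{IMCgeneral} via the twist map on determinants, use the commutativity of the resulting diagram (the paper's Lemma \ref{comm}, which is exactly the compatibility you single out as the technical heart) to identify its image under $\pi_T$ with $\beta_S^b(T)$, then invoke ${\rm SS}(M/K,R)$ and the reformulation of the eTNC under condition (c) (the paper's Lemma \ref{etnc}). The only differences are presentational, e.g.\ the paper isolates these two steps as separate lemmas and disposes of the diagram compatibility by citing a standard argument of Tsoi.
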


\begin{remark}\label{remark descent} Condition (c) in Theorem \ref{descent} is predicted to be satisfied in all but a few exceptional cases (see Jannsen \cite[Conj.\@ 1]{jannsen-1}). However, if $T = \tilde\varepsilon \cdot \ZZ_{p, F/K}$ for a finite abelian extension $F$ of $K$ and an idempotent $\tilde\varepsilon$ of $\ZZ_p[\Gal(F/K)]$, then class field theory implies that (c) is satisfied only if $\tilde\varepsilon$ annihilates the submodule $X$ of the free $\ZZ_p$-module on the places of $F$ above $S\setminus S_\infty(K)$ comprising elements whose coefficients sum to zero. In the case $T = \tilde\varepsilon \cdot \ZZ_{p, F/K}$ and $\tilde\varepsilon\cdot X\not= 0$ the result of \cite[Thm.\@ 5.2]{bks2} gives an alternative strategy for proving the equivariant Tamagawa Number Conjecture for $(h^0(F),\ZZ_p[\Gal(F/K)]\tilde\varepsilon)$ that involves the Mazur-Rubin-Sano Conjecture.
\end{remark}

\begin{remark}\label{extended field remark} The field $L_\infty = \bigcup_n L_n$ in Theorem \ref{descent} is determined by $T$. However, the argument given below will show that the same result is valid if in the statement of  Theorem \ref{descent} one replaces $L_\infty$, respectively $L_n$, by any abelian extension $L_\infty' = \bigcup_n L_n'$, respectively $L_n'$,  of $K$ with the property that $L_n \subseteq L_n'$ for each $n$ and $\Gal(L_\infty'/K)$ is isomorphic to $\ZZ_p^d\times \Delta$ for some natural number $d$ and finite abelian group $\Delta$. 
\end{remark}

The proof of Theorem \ref{descent} will now occupy the remainder of this section. We start by making the following technical observation. 

\begin{lemma}\label{etnc}
Assume Hypotheses \ref{hyp} and \ref{hyptorsionfree} and that $H^2(\cO_{K,S}, V^\ast(1))$ vanishes. Then we have a canonical isomorphism
$$\pi_T \: \QQ_p\otimes_{\ZZ_p}{\det}_{\cR_p}^{-1}(\rgamma (\cO_{K,S}, T^\ast(1))) \cong \QQ_p\otimes_{\ZZ_p} {\bigwedge}_{\cR_p}^r H^1(\cO_{K,S},T^\ast(1))$$
and the equivariant Tamagawa Number Conjecture for $(M,\cR_p)$ holds if and only if there exists an $\cR_p$-basis
$$\fz_S^b(T) \in {\det}_{\cR_p}^{-1}(\rgamma (\cO_{K,S}, T^\ast(1)))$$
such that
$$\pi_T (\fz_S^b(T))=\eta_S^b(T). $$
\end{lemma}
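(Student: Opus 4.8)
The statement is essentially a reformulation of the equivariant Tamagawa Number Conjecture (eTNC) for the pair $(M,\cR_p)$ in the special case where the motivic cohomology is concentrated in a single degree, so the proof will consist of unwinding definitions and checking that the relevant determinant module is free of rank zero in every degree except degree one. First I would recall the standard formulation of the eTNC: it asserts the existence of a basis (the ``zeta element'') of a graded determinant module built from $\rgamma_c(\cO_{K,S},T^\ast(1))$ together with the fundamental line, such that this basis maps under a canonical period-regulator isomorphism to the leading term of the $L$-function of $M$. By \cite[Def.~4.10]{bss2}, the Bloch-Kato element $\eta_S^b(T)$ is precisely the image of this leading term under the composite of the canonical comparison isomorphisms, so once the fundamental line is identified with $\det_{\cR_p}^{-1}(\rgamma(\cO_{K,S},T^\ast(1)))$ the claimed equivalence is immediate. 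Thus the real content is constructing the isomorphism $\pi_T$.

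To construct $\pi_T$, I would use Lemma \ref{compact lemma}. Applying part (i) with $\mathcal{A}=\cR_p$ and $N=T$ shows that $C_{\cR_p}(T)$ is a perfect complex acyclic outside degrees $0,1,2$ with vanishing Euler characteristic, and by part (ii) we may identify its cohomology: $H^0(C_{\cR_p}(T))\cong H^0(K,T^\ast(1))$, which vanishes by Hypothesis \ref{hyptorsionfree}(i); $H^1(C_{\cR_p}(T))\cong H^1(\cO_{K,S},T^\ast(1))$, which is $\ZZ_p$-free by Hypothesis \ref{hyptorsionfree}(ii); and there is an exact sequence $0\to H^2(\cO_{K,S},T^\ast(1))\to H^2(C_{\cR_p}(T))\to Y_K(T)\to 0$. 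After inverting $p$, the vanishing of $H^2(\cO_{K,S},V^\ast(1))$ (the hypothesis) gives $\QQ_p\otimes H^2(C_{\cR_p}(T))\cong \QQ_p\otimes Y_K(T)$, which is free of rank $r$ over $R_p$. Since the complex $\QQ_p\otimes C_{\cR_p}(T)$ is thus perfect with cohomology concentrated in degrees $1$ and $2$, both free, the standard properties of the determinant functor give a canonical isomorphism
\[
\QQ_p\otimes_{\ZZ_p}{\det}_{\cR_p}^{-1}(C_{\cR_p}(T))\cong \QQ_p\otimes_{\ZZ_p}\Big({\det}_{R_p}^{-1}(H^1)\otimes {\det}_{R_p}(H^2)\Big)\cong \QQ_p\otimes_{\ZZ_p}\big({\bigwedge}_{\cR_p}^r H^1(\cO_{K,S},T^\ast(1))\otimes_{R_p}{\det}_{R_p} Y_K(T)\big).
\]
Using the chosen basis $b$ of $Y_K(T)$ to trivialise $\det_{R_p}Y_K(T)$, and the duality identification of $\det_{\cR_p}^{-1}(C_{\cR_p}(T))$ with $\det_{\cR_p}^{-1}(\rgamma(\cO_{K,S},T^\ast(1)))$ coming from the definition $C_{\cR_p}(T)=\rhom_{\cR_p}(\rgamma_c(\cO_{K,S},T),\cR_p[-3])$ together with Artin-Verdier duality, I obtain the required isomorphism $\pi_T$. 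The dependence on $b$ is exactly what makes $\fz_S^b(T)$ depend on $b$ in the same way that $\eta_S^b(T)$ does, so the equivalence is correctly normalised.

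The final step is to match $\pi_T(\fz_S^b(T))=\eta_S^b(T)$ with the eTNC. Here I would invoke the definition of $\eta_S^b(T)$ from \cite[Def.~4.10]{bss2}: it is defined precisely as the image, under the composite of the $p$-adic realisation comparison isomorphisms and the chosen trivialisation by $b$, of the leading term of the $L$-function, and the eTNC for $(M,\cR_p)$ (in the formulation via the fundamental line) asserts exactly that this leading term is the image of an $\cR_p$-basis of the relevant determinant module. Transporting this statement along $\pi_T$ and the identification $C_{\cR_p}(T)\simeq \rgamma(\cO_{K,S},T^\ast(1))[\text{shift}]$ yields the stated criterion. \textbf{The main obstacle} I anticipate is bookkeeping rather than conceptual: one must be careful that the various canonical isomorphisms (the determinant of the perfect complex, the Artin-Verdier duality between $\rgamma_c$ and $\rgamma$, the trivialisation of $\det Y_K(T)$ by $b$, and the comparison isomorphisms entering the definition of $\eta_S^b(T)$) are composed with exactly the normalisations used in \cite{bss2}, so that no spurious sign or unit is introduced and the degree-two cohomology really does get ``cancelled'' against $Y_K(T)$ in the way claimed. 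Once the dictionary between the formulation of the eTNC via $\rgamma_c$ and its formulation via $\rgamma(\cO_{K,S},T^\ast(1))$ and $Y_K(T)$ is set up carefully, the proof is a formal consequence of Lemma \ref{compact lemma} and the definition of Bloch-Kato elements.
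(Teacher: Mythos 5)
Your argument is correct in substance and follows the same basic route as the paper: the paper's (very terse) proof likewise combines Hypotheses \ref{hyp} and \ref{hyptorsionfree} with Lemma \ref{compact lemma}\,(i) and (ii) and the vanishing of $H^2(\cO_{K,S},V^\ast(1))$ to see that $\rgamma(\cO_{K,S},V^\ast(1))$ is acyclic outside degree one with $H^1$ free of rank $r$ over $\QQ_p\otimes_{\ZZ_p}\cR_p$, whence $\pi_T$ exists, and then treats the equivalence with the eTNC as a direct consequence of the definition of $\eta_S^b(T)$ in \cite[Def.\@ 4.10]{bss2} and the formulation of the conjecture, exactly as you do. The one point where your construction of $\pi_T$ deviates, and where you should be careful, is the detour through $C_{\cR_p}(T)$: Artin--Verdier duality does \emph{not} identify ${\det}_{\cR_p}^{-1}(C_{\cR_p}(T))$ with ${\det}_{\cR_p}^{-1}(\rgamma(\cO_{K,S},T^\ast(1)))$ outright; by Lemma \ref{compact lemma}\,(ii) the two differ precisely by a factor ${\det}_{\cR_p}(Y_K(T))$ coming from the infinite places. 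The paper avoids this by defining $\pi_T$ directly on $\rgamma(\cO_{K,S},T^\ast(1))$, which is rationally concentrated in degree one (the role of $C_{\cR_p}(T)$ is only to supply, via its vanishing Euler characteristic and the identification $\QQ_p\otimes H^2(C_{\cR_p}(T))\cong\QQ_p\otimes Y_K(T)$, the fact that $H^1$ has rank $r$); consequently $\pi_T$ is canonical and independent of $b$, with the $b$-dependence residing entirely in $\eta_S^b(T)$ and hence in $\fz_S^b(T)$. In your version the basis $b$ is used to trivialise ${\det}_{R_p}(Y_K(T))$, and if you use it consistently both to pass from $C_{\cR_p}(T)$ to $\rgamma(\cO_{K,S},T^\ast(1))$ and to remove the $Y_K(T)$ factor from the cohomological computation, the two occurrences cancel and you recover the paper's canonical map; but as written your remark that ``the dependence on $b$ is exactly what makes $\fz_S^b(T)$ depend on $b$'' conflicts with the word ``canonical'' in the statement, and your displayed determinant identity mixes ${\det}^{-1}(H^1)$ with ${\bigwedge}_{\cR_p}^r H^1$ in a way that needs the sign/inverse conventions fixed -- this is the bookkeeping you yourself flag, and it is the only part requiring repair.
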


\begin{proof} The vanishing of $H^2(\cO_{K,S}, V^\ast(1))$ implies that $\rgamma (\cO_{K,S}, V^\ast(1))$ is acyclic outside degree one and also combines with Hypothesis \ref{hyp} and the result of Lemma \ref{compact lemma}\,(i) and (ii) to imply $H^1(\cO_{K,S},V^\ast(1))$ is a free $(\QQ_p\otimes_{\ZZ_p} \cR_p)$-module of rank $r$. The first claim follows directly from this. The second claim then follows from the definition of Bloch-Kato elements and the precise formulation of the equivariant Tamagawa Number Conjecture. 
\end{proof}

We note that the vanishing of $H^2(\cO_{K,S}, V^\ast(1))$ plays an essential role in the construction of the isomorphism  $\pi_T$ in 
 Lemma \ref{etnc}. We will also later use the fact (that is verified by the argument of \cite[Prop.\@ 2.21]{sbA}) that this map  $\pi_T$ restricts to give a homomorphism of $\cR_p$-modules (that we denote by the same symbol) of the form 
\[
\pi_T \: {\det}_{\cR_p}^{-1}(\rgamma (\cO_{K,S}, T^\ast(1))) \to {\bigcap}_{\cR_p}^r H^1(\cO_{K,S},T^\ast(1)).
\]

In addition, just as in Lemma \ref{lemtwist2}, there exists an isomorphism
\[\varepsilon\cdot \ZZ_p(1)_{L_\infty/K} \otimes_{\Lambda, \chi_T} \cR_p \cong T^\ast(1),\]
which induces a map
\[{\rm tw}_{T}^{\rm det} \: \varepsilon\cdot{\det}_\Lambda^{-1}(\rgamma(\cO_{K,S},\ZZ_p(1)_{L_\infty/K})) \to  {\det}_{\cR_p}^{-1}(\rgamma (\cO_{K,S}, T^\ast(1))).\]

The relation between the maps $\pi_T, {\rm tw}_{T}^{\rm det}$ and ${\rm tw}_{T}^r$ is described by the following result.  

\begin{lemma}\label{comm}
Under the hypotheses of Theorem \ref{descent}, the following diagram commutes:
\[
\xymatrix{
\varepsilon\cdot{\det}_\Lambda^{-1}(\rgamma(\cO_{K,S},\ZZ_p(1)_{L_\infty/K})) \ar[rr]^{\quad \quad{\rm tw}_T^{\rm det}} \ar[d]_{\pi_\infty}& &  {\det}_{\cR_p}^{-1}(\rgamma (\cO_{K,S}, T^\ast(1))) \ar[d]^{\pi_T} \\
\varprojlim_n \varepsilon \cdot{\bigcap}_{\Lambda_n}^r H^1(\cO_{K,S}, \ZZ_p(1)_{L_n/K}) \ar[rr]_{\quad \quad    {\rm tw}_{T}^r}& &{\bigcap}_{\cR_p}^r H^1(\cO_{K,S},T^\ast(1)).
}
\]
\end{lemma}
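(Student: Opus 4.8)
The plan is to verify commutativity of the diagram at the level of explicit perfect complexes, by choosing a compatible family of representatives and tracing both composites through the construction of the determinant functor and the exterior-power bidual maps. First I would note that, by Lemma \ref{lemtwist2} and its analogue for $T^\ast(1)$, the morphism ${\rm tw}_T^{\rm det}$ is induced by the isomorphism of $\cR_p[G_K]$-modules $\varepsilon\cdot\ZZ_p(1)_{L_\infty/K}\otimes_{\Lambda,\chi_T}\cR_p\cong T^\ast(1)$ together with the base-change compatibility of $\rgamma(\cO_{K,S},-)$ and the determinant functor recorded in Lemma \ref{compact lemma}\,(iv) (applied in the limit over $F\in\Omega(L_\infty)$). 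Concretely, writing $\widetilde{C} := \varepsilon\cdot\rgamma(\cO_{K,S},\ZZ_p(1)_{L_\infty/K})$, Hypothesis \ref{hypsoule}\,(i)\,(b) and (i)\,(c) combine with Lemma \ref{compact lemma}\,(iii) to let us represent $\widetilde{C}$ by a complex $\widetilde{P}\xrightarrow{\widetilde\theta}\widetilde{P}$ of finitely generated free $\Lambda\varepsilon$-modules in degrees $1$ and $2$; applying $-\otimes_{\Lambda\varepsilon,\chi_T}\cR_p$ (which is exact on free modules) and invoking the hypothesis that $H^2(\cO_{K,S},V^\ast(1))=0$ together with Hypothesis \ref{hyptorsionfree}, one obtains a representative $P\xrightarrow{\theta}P$ of $\rgamma(\cO_{K,S},T^\ast(1))$ (up to the usual $\Sigma$-modification of Remark \ref{sigma}) with $\ker(\theta)=H^1(\cO_{K,S},T^\ast(1))$ a free $\cR_p$-module of rank $r$ and $\coker(\theta)$ torsion; and similarly $\ker(\widetilde\theta)=H^1(\cO_{K,S},\ZZ_p(1)_{L_\infty/K})\varepsilon$.

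Next I would make the two maps $\pi_T$ and $\pi_\infty$ completely explicit in terms of these representatives. For a complex $P\xrightarrow{\theta}P$ of free modules over a ring $\mathcal{A}$ in degrees $1,2$ with cohomology concentrated (rationally) in degree $1$, the canonical trivialisation ${\det}_{\mathcal{A}}^{-1}(P\xrightarrow{\theta}P)\cong{\det}_{\mathcal{A}}(P)\otimes{\det}_{\mathcal{A}}^{-1}(P)$ composed with the rational identification ${\det}_{\mathcal{A}}(P)\cong{\bigwedge}^r_{\mathcal{A}}H^1$ (via the splitting of $0\to H^1\to P\to\im(\theta)\to 0$ after inverting an appropriate regular element, using that $\im(\theta)$ has trivial rational determinant because $\coker(\theta)$ is torsion) yields exactly the map $\pi_T$; this is the recipe underlying the construction of Bloch-Kato elements and of the eTNC as in \cite[Def.\@ 4.10]{bss2} and \cite[Prop.\@ 2.21]{sbA}, and the analogous recipe over $Q(\Lambda)$ (with $\Sigma$-modification) gives $\pi_\infty$ after passing to the inverse limit over $F\in\Omega(L_\infty)$ along the transition maps, which is where the map $\varprojlim_F{\bigcap}^r_{\Lambda_F}H^1(\cdots)$ appears. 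The key observation is that $\theta$ is literally $\widetilde\theta\otimes_{\Lambda\varepsilon,\chi_T}\id$, so the bottom and top trivialisations are compatible under base change by construction of the determinant functor; and on the cohomology level, the map ${\rm tw}_T^r$ of Lemma \ref{twist modpn} (resp.\@ its limit version in Definition \ref{def soule stark}) was itself defined, via \cite[Prop.\@ 2.4]{bss} and \cite[Prop.\@ 2.21]{sbA}, as exactly the map on biduals ${\bigcap}^r_{\Lambda\varepsilon}\ker(\widetilde\theta)\to{\bigcap}^r_{\cR_p}\ker(\theta)$ induced by the inclusions $\ker(\widetilde\theta)\subset\widetilde{P}$ and $\ker(\theta)\subset P$ and the projection $\widetilde{P}\twoheadrightarrow\widetilde P\otimes_{\Lambda\varepsilon,\chi_T}\cR_p=P$. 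Hence both composites in the square send an $\mathcal{A}$-generator of ${\det}^{-1}_{\Lambda\varepsilon}(\widetilde P\xrightarrow{\widetilde\theta}\widetilde P)$ to the same element, namely the image in ${\bigcap}^r_{\cR_p}H^1(\cO_{K,S},T^\ast(1))$ of the chosen generator of ${\bigwedge}^r_{\Lambda\varepsilon}\ker(\widetilde\theta)$ under the bidual map induced by $\widetilde P\twoheadrightarrow P$.

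The main obstacle, and the step I would spend most care on, is the bookkeeping around the passage to the inverse limit over $F\in\Omega(L_\infty)$ and the identification $\varprojlim_n{\bigcap}^r_{\cR/p^n}H^1(\cO_{K,S},T^\ast(1)/p^n)\cong{\bigcap}^r_{\cR_p}H^1(\cO_{K,S},T^\ast(1))$ of \cite[Lem.\@ 2.4]{ghaleo}: one must check that the representative $\widetilde P\xrightarrow{\widetilde\theta}\widetilde P$ over $\Lambda\varepsilon$ may be chosen so that its reductions modulo the augmentation-type ideals cutting out each $L_n$ simultaneously represent the complexes $C_{\Lambda_n\varepsilon}(\varepsilon\cdot\ZZ_{p,L_n/K})$ used in Lemma \ref{twist modpn}, so that the limit of the finite-level twist maps ${\rm tw}_{T,n}^r$ is genuinely computed by the single base change $-\otimes_{\Lambda\varepsilon,\chi_T}\cR_p$; this is a standard but slightly delicate compatibility of the kind established in the proof of \cite[Thm.\@ 3.4]{bks2}, and once it is in place the commutativity of the square is formal. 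A secondary point to handle is the intervention of the auxiliary set $\Sigma$ of Remark \ref{sigma} needed to force Hypothesis \ref{hyptorsionfree}\,(ii) at every level: one chooses $\Sigma$ once and for all, notes that $\Sigma$-modification is compatible with all the base-change and determinant constructions above, and verifies at the end that the resulting element is independent of $\Sigma$ (as in \cite{bks2-2}), so that the displayed diagram — stated without reference to $\Sigma$ — indeed commutes.
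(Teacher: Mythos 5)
Your argument is correct and is essentially the same "standard" descent-compatibility argument that the paper itself invokes by simply citing Tsoi \cite[Cor.\@ 4.9]{ghaleo}: represent the $\Lambda\varepsilon$-level complex by a two-term complex of free modules, note that both the determinant trivialisations and the bidual twist maps are computed from this single representative by the base change along $\chi_T$, and check compatibility with the limit over finite layers and the $\Sigma$-modification. Since the paper gives no further detail, your write-up fleshes out exactly the intended proof.
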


\begin{proof}
This is proved by a standard argument. See, for example, the argument of Tsoi \cite[Cor.\@ 4.9]{ghaleo}.
\end{proof}

We can now give a proof of Theorem \ref{descent}. To do this we note, firstly, that the assumed validity of Conjecture \ref{IMCgeneral} with $\mathfrak{A} = \Lambda\varepsilon$ implies the existence of a $\Lambda\varepsilon$-basis
\begin{eqnarray*}
\fz_{L_\infty/K,S}^w \in \varepsilon\cdot{\det}_\Lambda^{-1}(\rgamma(\cO_{K,S},\ZZ_p(1)_{L_\infty/K}))
\end{eqnarray*}
such that $\pi_\infty(\fz_{L_\infty/K,S}^w)=\eta_{L_\infty/K,S}^w$. The element 
$$\fz_S^b(T):= {\rm tw}_T^{\rm det}(\fz_{L_\infty/K,S}^w) $$
is therefore an $\cR_p$-basis of ${\det}_{\cR_p}^{-1}(\rgamma(\cO_{K,S},T^\ast(1)))$. 

In addition, since the very definition of $\beta_S^b(T)$ implies that it is equal to 
$$\beta_S^b(T)= {\rm tw}_{T}^r (\eta_{L_\infty/K,S}^w) \in  {\bigcap}_{\cR_p}^r H^1(\cO_{K,S},T^\ast(1)),$$
the commutative diagram in Lemma \ref{comm} implies that 
$$\pi_T (\fz_S^b(T))=\beta_S^b(T). $$

From Conjecture ${\rm SS}(M/K,R)$, we can therefore deduce that 
$$\pi_T (\fz_S^b(T))=\eta_S^b(T) $$
and, by Lemma \ref{etnc}, this equality is equivalent to the validity of the equivariant Tamagawa Number Conjecture for the pair $(M,\cR_p)$. \medskip \\
This completes the proof of Theorem \ref{descent}.  \qed

\section{Iwasawa Main Conjectures for $\mathbb{G}_m$}\label{mc Gm section}

In this section we provide new evidence for higher-rank Iwasawa Main Conjectures for $\mathbb{G}_m$ including, in particular, 
 Conjecture \ref{IMCgeneral}.

\subsection{Statement of the main result}

Let $L_\infty / K$ be an abelian extension in which no finite place splits completely and which is such that $\Gal ( L_\infty / K) \cong \Gamma \times G$ for a finite abelian group $G$ and $\Gamma \cong \Z_p^d$ with $d > 0$. Put $L =  L_\infty^\Gamma$. Let $P \subseteq G$ be the $p$-Sylow subgroup of $G$ and set $\Delta =  G / P$. We fix a character $\chi \: \Delta \to \overline{\Q_p}^\times$ and, following \cite[Hyp. 2.9 and Hyp. 3.1]{bss3}, we consider the following hypothesis on this character. 

\begin{hypothesis} \label{bss-hypothesis} The character $\chi$ satisfies each of the following conditions: 
\begin{enumerate}[label=(\roman*)]
\item $\chi \not \in \{ 1, \omega \}$, where $\omega$ is the Teichm\"uller character of $K$;
\item $\chi^2 \neq \omega$ if $ p = 3$;
\item $\chi (v) \neq 1$ for every $v \in S_\text{ram} ( L / K) \cup S_p (K)$;
\item $r : = | \{ v \in S_\infty (K) \mid \chi (v) = 1 \} | > 0$.
\end{enumerate}
\end{hypothesis}

Let $L_\chi$ be the subfield of $L$ cut out by the character $\chi$ and,
for any subfield $F$ of $L_\infty / K$, denote $\Gal (F / K)$ by $\cG_F$. 

We write $S$ for the finite set of places $S_\infty (K) \cup S_\text{ram} ( L/ K) \cup S_p (K)$ and set 
\[
U_{L_\infty} := \varprojlim_F (\Z_p \otimes_\Z \bigO_{F, S}^\times ) 
\quad \text{ and } \quad
\Cl (L_\infty) := \varprojlim_F \Cl (F),
\]
where in both limits $F$ ranges over all finite subfields of $L_\infty / L$ and $\Cl (F)$ denotes the $p$-part of the ideal class group of $F$. We take $\Lambda = \Z_p [\im \chi] \llbracket \Gamma \rrbracket [P]$ to be the relevant Iwasawa algebra. \medskip \\
As before we fix a basis $b$ of $Y_K (\Z_{p, L_\infty / K })$ and, by abuse of notation, we will denote the induced basis of $Y_K ( \Z_{p, F / K})$ for a subfield $F$ of $L_\infty / K$ by $b$ as well. We denote the Rubin-Stark element relative to this choice of data by $\eta^b_{F / K, S}$ (cf.\@ \S \ref{review stark}). \\
Finally, in the sequel, we will also use the following general notation: the \textit{$\chi$-component} of a $\Delta$-module $X$ is defined to be 
\[ X_\chi := X \otimes_{\Z [\Delta]} \Z_p [\im \chi]\]
 with $\Delta$ acting on $\Z_p [\im \chi]$ as $\sigma \cdot x = \chi (\sigma) \cdot x$ and for any element $a \in X$ we set $a^\chi := a \otimes 1 \in X_\chi$. For any $\Lambda$-module $X$ and natural number $i$ we moreover set 
 \[ \EE^i_\Lambda (X) := \Ext^i_\Lambda (X, \Lambda).\] 

We can now state the main result of this section. 

\begin{theorem} \label{classical-imc-result}
Assume the Rubin-Stark Conjecture is valid for all abelian extensions of $K$ and that $\chi$ validates Hypothesis \ref{bss-hypothesis}. Then the following claims are also valid. 
\begin{enumerate}[label=(\roman*)]
\item The norm-coherent family $(\eta^b_{F / K, S})_F$ of Rubin-Stark elements, where $F$ ranges over all finite extensions of $L$ in $L_\infty$, defines a canonical element $\eta^{b, \chi}_{L_\infty / K, S}$ of $\bidual^r_{\Lambda} U_{L_\infty, \chi}$. For this element one has an inclusion 
\[
 \Fitt^0_\Lambda \big ( \Cl (L_\infty)_\chi \big) \subseteq \im \big ( \eta^{b, \chi}_{L_\infty / K, S} \big )
\]
with pseudo-null cokernel. 
\item For the canonical functorial homomorphism 
\[ \kappa \: \EE^1_\Lambda \big ( \big ( \bidual^r_\Lambda U_{L_\infty, \chi} \big ) / ( \Lambda \cdot \eta^{b, \chi}_{L_\infty / K, S} ) \big) \to \EE^1_\Lambda \big (  \bidual^r_\Lambda U_{L_\infty, \chi}\big ) \]
one has  
\begin{equation*} 
\Fitt^0_\Lambda (\ker (\kappa)) =  \im \big ( \eta^{b, \chi}_{L_\infty / K, S} \big )^{\ast \ast}.
\end{equation*}
In particular, if $\wp$ is any prime ideal of $\Lambda$ of height at most one, then one has 
\begin{equation} \label{classical-imc-2}
\Fitt^0_{\Lambda} \big(\EE^1_\Lambda \big ( \big ( \bidual^r_\Lambda U_{L_\infty, \chi} \big ) / 
( \Lambda \cdot \eta^{b, \chi}_{L_\infty / K, S} ) \big)\big)_\wp = \Fitt^0_{\Lambda} \big ( \Cl (L_\infty)_{\chi} \big)_\wp.
\end{equation}
\end{enumerate}
\end{theorem}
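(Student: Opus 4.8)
\textbf{Proof proposal for Theorem \ref{classical-imc-result}.}

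The plan is to follow the higher-rank Kolyvagin-derivative machinery of Sakamoto et al.\@ \cite{bss}, which precisely relates norm-coherent families of Rubin-Stark elements to Fitting ideals of Iwasawa-theoretic class groups via exterior-power biduals, and to combine this with a Poitou-Tate / global duality computation that packages $U_{L_\infty,\chi}$ and $\Cl(L_\infty)_\chi$ into a single perfect complex. First I would set up the relevant perfect complex: working over $\Lambda = \Z_p[\im\chi]\llbracket\Gamma\rrbracket[P]$, one takes the inverse limit over $F$ of the compactly supported cohomology complexes $C_{\Lambda_F\varepsilon}(\varepsilon\cdot\Z_{p,F/K})$ from \S\ref{gal coh section} (with $\varepsilon$ the idempotent cutting out the $\chi$-isotypic part), and uses Lemma \ref{compact lemma}\,(i) and (ii) together with the hypothesis that no finite place splits completely in $L_\infty$ to see that this limit complex $C_\infty$ is perfect, acyclic outside degrees one and two, with $H^1(C_\infty)$ identified (after $\chi$-localisation) with $U_{L_\infty,\chi}$ (up to the global-units-versus-$S$-units bookkeeping, which is where Hypothesis \ref{bss-hypothesis}\,(i)--(iii) enters to kill the exceptional Euler-factor and trivial-zero terms) and $H^2(C_\infty)$ an extension of $\Cl(L_\infty)_\chi$ by a torsion module which is again handled by the splitting hypothesis. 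The condition $r>0$ in Hypothesis \ref{bss-hypothesis}\,(iv) guarantees that the relevant exterior power is nontrivial and that the Rubin-Stark element genuinely lives in $\bidual^r_\Lambda U_{L_\infty,\chi}$; here the key input is \cite[Prop.\@ 2.4]{bss} (already invoked in the proof of Lemma \ref{twist modpn}), which identifies $\bidual^r$ of the kernel of a map between free modules with an explicit submodule of the top exterior power of the ambient free module.

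Next I would prove claim (i). The norm-compatibility of Rubin-Stark elements (standard, e.g.\@ from \cite{rubinstark} or \cite{bks2}) gives the element $\eta^{b,\chi}_{L_\infty/K,S}\in\bidual^r_\Lambda U_{L_\infty,\chi}$; the assumed validity of the Rubin-Stark Conjecture at each finite layer is exactly what is needed to know each $\eta^b_{F/K,S}$ lies in the integral lattice $\bidual^r_{\Lambda_F} U_{F,S,\chi}$ (via the identification \eqref{rl=bidual rem}), and the inverse limit of these is $\bidual^r_\Lambda U_{L_\infty,\chi}$ by a limit argument as in \cite[Lem.~2.4]{ghaleo}. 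The inclusion $\Fitt^0_\Lambda(\Cl(L_\infty)_\chi)\subseteq\im(\eta^{b,\chi}_{L_\infty/K,S})$ with pseudo-null cokernel is then the output of the higher-rank Kolyvagin-derivative argument: one runs the Euler-system-to-bounded-module argument of \cite{bss} over $\Lambda$, using Chebotarev to produce enough Kolyvagin primes, to show that the image ideal $\im(\eta^{b,\chi}_{L_\infty/K,S})\subseteq\Lambda$ contains $\Fitt^0_\Lambda$ of the dual of the second cohomology (equivalently of $\Cl(L_\infty)_\chi$ after the identifications above), and that the two ideals agree away from height-one primes, i.e.\@ the cokernel is pseudo-null. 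This is essentially a $\chi$-twisted, higher-rank incarnation of the classical ``Rubin-Stark elements control class groups'' philosophy; Hypothesis \ref{bss-hypothesis} is precisely the package of conditions under which the Kolyvagin-derivative bound is both available (no obstruction from $\chi\in\{1,\omega\}$ or the $p=3$ anomaly) and sharp.

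For claim (ii), the point is to upgrade the ``pseudo-null cokernel'' statement of (i) to an exact identity of Fitting ideals by applying the functor $\EE^1_\Lambda(-)=\Ext^1_\Lambda(-,\Lambda)$, which converts torsion modules with pseudo-null defects into honestly computable objects. Concretely, from the short exact sequence $0\to\Lambda\cdot\eta^{b,\chi}_{L_\infty/K,S}\to\bidual^r_\Lambda U_{L_\infty,\chi}\to Q\to 0$ (with $Q$ the quotient) one gets a long exact $\Ext$-sequence; the connecting maps and the functoriality of $\kappa$ let one express $\ker(\kappa)$ in terms of $\EE^1_\Lambda(Q)$ modulo the image of $\EE^1_\Lambda(\bidual^r_\Lambda U_{L_\infty,\chi})$, and a standard computation of Fitting ideals of first $\Ext$-modules of cyclic-by-torsion modules over the (not-quite-regular, since $[P]$ may contribute) ring $\Lambda$ — using that $\Lambda$ is Gorenstein of the appropriate dimension and reflexive-hull formalism for the $(-)^{\ast\ast}=\Fitt^0(\cdot)^{\ast\ast}$ term — gives $\Fitt^0_\Lambda(\ker\kappa)=\im(\eta^{b,\chi}_{L_\infty/K,S})^{\ast\ast}$. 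The final localised identity \eqref{classical-imc-2} then drops out by localising at a height-$\le 1$ prime $\wp$: over $\Lambda_\wp$ (a discrete valuation ring or a product thereof, up to the $[P]$-factor) reflexive hulls are trivial, the pseudo-null cokernel from (i) vanishes, and one reads off $\Fitt^0_\Lambda(\EE^1_\Lambda(Q))_\wp=\Fitt^0_\Lambda(\Cl(L_\infty)_\chi)_\wp$ from the class-group identification of (i).

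\textbf{Main obstacle.} I expect the hard part to be the sharp Kolyvagin-derivative bound in claim (i) — i.e.\@ showing not merely that $\Fitt^0_\Lambda(\Cl(L_\infty)_\chi)$ is contained in the image ideal but that the cokernel is \emph{pseudo-null} rather than merely torsion. This requires producing sufficiently many independent Kolyvagin primes over the Iwasawa algebra $\Lambda$ (a Chebotarev argument in the relevant $\chi$-isotypic setting, where the presence of the non-trivial $p$-group $P$ and the requirement $\chi(v)\neq 1$ for $v\in S_{\mathrm{ram}}\cup S_p$ are essential), and then a careful bookkeeping of exterior-power biduals under specialisation — precisely the subtlety that \cite[Prop.\@ 2.4]{bss} was designed to handle. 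A secondary technical point is ensuring that all the identifications of $H^1$ and $H^2$ of the limit complex with $U_{L_\infty,\chi}$ and $\Cl(L_\infty)_\chi$ are compatible with the Fitting-ideal and $\Ext^1$ formalism over the Gorenstein (but possibly non-regular) ring $\Lambda$, so that the reflexive-hull operation $(-)^{\ast\ast}$ behaves as expected when one passes to height-one localisations.
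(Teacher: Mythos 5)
Your overall framing (``higher-rank Kolyvagin-derivative techniques of Sakamoto et al.'') matches the paper's advertising, but the actual proof of claim (i) in the paper contains no Euler-system or Chebotarev argument over $\Lambda$ at all, whereas your proposal makes exactly such an argument the core of (i) and then concedes in your ``main obstacle'' paragraph that you do not know how to carry it out. In the paper, the sharp finite-level equality $\im ( \eta^{b, \chi}_{F / K, S} ) = \Fitt^0_{\Z_p [\cG_F]} ( \Cl (F))_\chi$ is simply quoted from \cite[Cor.~3.6]{bss3} (this is where the assumed Rubin--Stark Conjecture and Hypothesis \ref{bss-hypothesis} are consumed), and the entire new content of (i) is the passage to the limit: (a) the identification $\varprojlim_F \bidual^r_{\Z_p [\cG_F]_\chi} ( \Z_p \otimes_\Z \bigO_{F, S}^\times )_\chi = \bidual^r_\Lambda U_{L_\infty, \chi}$ via \cite[Lem.~B.15]{Sakamoto20}; (b) the inclusion $\varprojlim_F \im (a_F) \subseteq \im (a)$ with pseudo-null cokernel, coming from comparing $\im(a)$ with $\{ f(a) \mid f \in \exprod^r_\Lambda P^\ast\}$ (cf.\@ \cite[Lem.~2.7\,(c)]{BullachDaoud}); and (c), crucially, the identification $\varprojlim_F \Fitt^0_{\Z_p [\cG_F]} ( \Cl (F))_\chi = \Fitt^0_\Lambda ( \Cl (L_\infty)_\chi)$, which is \emph{not} formal (Fitting ideals do not commute with inverse limits) and is obtained from the quadratic presentation $P \xrightarrow{\theta} P$ of $C_{\infty,\chi}$ together with \cite[Lem.~A.7]{sbA}, which produces an auxiliary element $z = (z_F)_F$ whose image ideals compute the Fitting ideals of $H^2$ at every finite level and at the infinite level simultaneously. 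Your proposal never addresses step (c), and replaces the cited finite-level input by an unproven Iwasawa-theoretic Kolyvagin construction; so the key content of (i) is missing. (A small but symptomatic slip: a pseudo-null cokernel means the two ideals agree \emph{after localisation at primes of height at most one}, not that they ``agree away from height-one primes''.)

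For claim (ii) your skeleton does match the paper: dualise the tautological sequence $0 \to \Lambda \cdot \eta^{b,\chi}_{L_\infty/K,S} \to \bidual^r_\Lambda U_{L_\infty,\chi} \to Q \to 0$, use that the cyclic submodule is free of rank one to read off $\Fitt^0_\Lambda(\ker\kappa)$ as the image ideal $I$ of $(\bidual^r_\Lambda U_{L_\infty,\chi})^\ast$ in $\Lambda$, and identify $I$ with $\im(\eta^{b,\chi}_{L_\infty/K,S})^{\ast\ast}$ by reflexivity and localisation. But three steps you gloss over are genuinely needed: the freeness of $\Lambda \cdot \eta^{b,\chi}_{L_\infty/K,S}$ (in the paper this follows because $\Fitt^0_\Lambda(\Cl(L_\infty)_\chi)$ spans $Q(\Lambda)$, hence contains a non-zero divisor, since $\Cl(L_\infty)_\chi$ is torsion); the $\Lambda$-torsionness of $Q$ (via the computation that $Q(\Lambda)\cdot U_{L_\infty,\chi}$ is free of rank $r$ from the presentation $P \to P$); and, for the final localised equality (\ref{classical-imc-2}), the vanishing of $\EE^1_\Lambda ( \bidual^r_\Lambda U_{L_\infty,\chi})_\wp$, without which $\ker(\kappa)_\wp$ need not equal $\EE^1_\Lambda(Q)_\wp$. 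The paper proves this last point by showing $\EE^1_\Lambda(X^\ast)_\wp = 0$ at primes of height at most two, via a projective presentation and the Gorenstein property of $\Lambda$; your remark that ``reflexive hulls are trivial and the pseudo-null cokernel vanishes'' at $\wp$ does not cover it, and it is not automatic since $\Lambda_\wp$ need not be regular when $\wp$ contains $p$ and $P$ is non-trivial.
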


\begin{remark} In certain natural situations, the equality (\ref{classical-imc-2}) has a more explicit interpretation. For example, 
if we assume that $G$ contains no element of order $p$, then (since characteristic ideals over power series rings are determined by their localisations at height one prime ideals),  (\ref{classical-imc-2}) implies that     
\[
\mathrm{char}_\Lambda \Big ( \big (\bidual^r_\Lambda U_{L_\infty, \chi} \big) / ( \Lambda \cdot \eta^{b, \chi}_{L_\infty / K, S} ) \Big)
= \mathrm{char}_\Lambda \big ( \Cl (L_\infty)_\chi \big).
\]
In regard of the latter equality, we recall that main conjectures in this more classical style have been proved (under certain additional hypotheses that include the assumed validity of Leopoldt's Conjecture) by B\"uy\"ukboduk and by B\"uy\"ukboduk and Lei in \cite{buyukboduk-1}, \cite{buyukboduk-2} and \cite[Thm.~7.7]{BL1}.
\end{remark}

\subsection{The proof of Theorem \ref{classical-imc-result}}

At the outset we note that, since $\chi$ validates Hypothesis \ref{bss-hypothesis}, 
the representation $\Z_{p, F / K} (1)(\chi^{-1})$ satisfies Hypothesis \ref{hyptorsionfree} for all finite intermediate extensions of $L_\infty / K$. Passing to the limit over $F$ in Lemma \ref{compact lemma}\,(iii) we therefore have that the complex
\[
C_{\infty, \chi} := \rgamma (\bigO_{K, S}, \Z_{p, L_\infty / K} (1)(\chi^{-1})) \oplus Y_K (\Z_{p, L_\infty / K})_\chi [-2]
\]
is isomorphic in $D(\Lambda)$ to a complex of the form 
\begin{equation}\label{quad rep} 
P \stackrel{\theta}{\longrightarrow} P, 
\end{equation}
where $P$ is a free $\Lambda$-module of finite rank and the first term is placed in degree one. In addition, 
Lemma \ref{compact lemma}\,(ii) combines with Kummer theory and class field theory to
give a canonical isomorphism of $\Lambda$-modules
\begin{equation}\label{can iso 1} \ker(\theta) \cong H^1 (C_{\infty, \chi}) \cong U_{L_\infty, \chi}\end{equation}
and 
a canonical split-exact sequence 
\begin{equation}\label{can iso 2}
0 \longrightarrow \Cl (L_\infty)_\chi \longrightarrow {\rm coker}(\theta) \longrightarrow Y_K ( \Z_{p, L_\infty / K})_\chi \longrightarrow 0,
\end{equation}
respectively. 
Next, we note that for any finite abelian extension $F$ of $L$ contained in $L_\infty$, the complex 
\[ C_{F, \chi} := \rgamma ( \bigO_{K, S}, \Z_{p, F / K} (1)(\chi^{-1})) \oplus Y_K ( \Z_{p, F / K})_\chi\]
is naturally isomorphic to $C_{\infty, \chi} \otimes^\mathbb{L}_\Lambda \Z_p [\cG_F]$ and hence isomorphic in $D(\Z_p [\cG_F]_\chi)$ to the complex
\begin{equation}\label{can rep 2} P_F \xrightarrow{\theta_F} P_F,\end{equation}
where $P_F$ denotes the $\Z_p [\cG_F]_\chi$-free module $P \otimes_{\Lambda} \Z_p [\cG_F]$, the first term is placed in degree one and $\theta_F$ denotes the endomorphism of $P_F$ induced by $\theta$.\\
In particular, since the module $( \Z_p \otimes_\Z \bigO_{F, S}^\times )_\chi = H^1(C_{F,\chi})$ identifies with $\ker(\theta_F)$, the above discussion allows us to apply a general observation of Sakamoto (see \cite[Lem.~B.15]{Sakamoto20}) in order to obtain a canonical identification
\[
 \varprojlim_F \bidual^r_{\Z_p [\cG_F]_\chi} ( \Z_p \otimes_\Z \bigO_{F, S}^\times )_\chi = \bidual^r_{\Lambda} U_{L_\infty, \chi}.
\]
Given this identification, the first assertion in claim (i) is therefore clear. \medskip \\
We next claim that, for any element $a = (a_F)_F$ of $\bidual^r_\Lambda U_{L_\infty, \chi}$, there is an inclusion of ideals of $\Lambda$ 
\begin{equation} \label{limit-images}
 \varprojlim_F \im (a_F) \subseteq \im (a)
\end{equation}
with pseudo-null cokernel.
To see this, we recall that
\begin{equation} \label{images}
\im (a) \supseteq \big \{ f (a) \mid f \in \exprod^r_\Lambda P^\ast \big \}
\quad \text{ and } \quad
\im (a_F) = \big \{ f (a) \mid f \in \exprod^r_{\Z_p [\cG_F]_\chi} P_F^\ast \big \},
\end{equation} 
where the first inclusion has pseudo-null cokernel
(see the proof of \cite[Lem.~2.7\,(c)]{BullachDaoud}). Note that, since the $\Lambda$-module $P^\ast = \varprojlim_F P_F^\ast$ is free, the module $\exprod^r_\Lambda P^\ast$ identifies with the limit $\varprojlim_F \exprod^r_{\Z_p [\cG_F]_\chi} P_F^\ast$ and so we have an equality
\begin{equation} \label{limit-comp}
 \big \{ f (a) \mid f \in \exprod^r_\Lambda P^\ast \big \} = \varprojlim_F  \big \{ f (a) \mid f \in \exprod^r_{\Z_p [\cG_F]_\chi} P_F^\ast \big \}.
\end{equation} 
This combines with (\ref{images}) to imply the claimed inclusion (\ref{limit-images}).
\medskip \\
The key point now is that the compatibility in (\ref{limit-images}) allows us to deduce from the result  \cite[Cor.~3.6]{bss3} of Sakamoto and the second and third authors that there is an equality
\begin{equation} \label{bss-limit-step-1}
\varprojlim_F \im \big ( \eta^{b, \chi}_{F / K, S} \big ) = \varprojlim_F \Fitt^0_{\Z_p [\cG_F]} ( \Cl (F))_\chi.
\end{equation}
To prove the second assertion in claim (i) we therefore need to show that the right hand side of (\ref{bss-limit-step-1}) agrees with $\Fitt^0_\Lambda \big ( \Cl (L_\infty)_\chi \big)$. 
However, as the complexes $C_{\infty, \chi}$ and $C_{F, \chi}$ are respectively isomorphic to the complexes 
 (\ref{quad rep}) and (\ref{can rep 2}), the general result of \cite[Lem.~A.7]{sbA} applies in this case to imply the existence of an element $z = (z_F)_F$ of 
 $\bidual^r_\Lambda U_{L_\infty, \chi}$ for which one has 
\begin{equation} \label{zeta-element-1}
\{
f (z) \mid f \in \exprod^r_\Lambda P^\ast \} =
\Fitt^r_{\Lambda} ( H^2 (C_{L_\infty, \chi})) =
\Fitt^0_\Lambda \big ( \Cl (L_\infty)_\chi \big)
\end{equation}
and
\begin{equation} \label{zeta-element}
\{ f(z_F) \mid f \in  \exprod^r_{\Z_p [\cG_F]_\chi} P_F^\ast \} = \Fitt^r_{\Z_p [\cG_F]_\chi} ( H^2 (C_{F, \chi})) = \Fitt^0_{\Z_p [\cG_F]} ( \Cl (F))_\chi.
\end{equation}
Upon combining these descriptions with (\ref{limit-comp}), we can therefore derive the displayed equality in  Theorem \ref{classical-imc-result}\,(i) as a direct consequence of (\ref{bss-limit-step-1}). \medskip \\
To prove Theorem \ref{classical-imc-result}\,(ii), we set $\mathcal{E} :=  \Lambda \cdot \eta^{b, \chi}_{L_\infty / K, S}$ and claim that $\mathcal{E}$ is a free $\Lambda$-module of rank one. It is sufficient to prove that $\Ann_\Lambda ( \eta^{b, \chi}_{L_\infty / K, S}) = 0$, which will follow if $\Fitt^0_\Lambda (\Cl (L_\infty)_\chi) \subseteq \im (\eta^{b, \chi}_{L_\infty / K, S})$ contains a non-zero divisor. Observe that 
\begin{equation} \label{fitting-ideal-span}
Q (\Lambda) \cdot \Fitt^0_\Lambda ( \Cl (L_\infty)_\chi) = \Fitt^0_{Q (\Lambda)} ( Q (\Lambda) \cdot \Cl (L_\infty)_\chi) = Q (\Lambda),
\end{equation}
where the first equality follows from a natural property of Fitting ideals and the second from the fact that $\Cl (L_\infty)_\chi$ is a $\Lambda$-torsion module, and so $Q (\Lambda) \cdot \Cl (L_\infty)_\chi= 0$. 
Thus, $\Fitt^0_\Lambda ( \Cl (L_\infty)_\chi)$ spans $Q (\Lambda)$ over $Q (\Lambda)$ and, in particular, must contain a non-zero divisor. \\
Moreover, the representative (\ref{quad rep}) of $C_{\infty, \chi}$ can be used to compute that the $Q(\Lambda)$-module $Q (\Lambda) \cdot U_{L_\infty, \chi}$ is free of rank $r$. It follows that $\big ( \bidual^r_\Lambda U_{L_\infty, \chi} \big ) / \mathcal{E}$ is the quotient of two modules that each span a free module of rank one over $Q(\Lambda)$ and hence must be $\Lambda$-torsion. 
Consequently, dualising the tautological exact sequence
\[
0 \longrightarrow  \mathcal{E}
\longrightarrow \bidual^r_\Lambda U_{L_\infty, \chi}
\longrightarrow  \big ( \bidual^r_\Lambda U_{L_\infty, \chi} \big ) / \mathcal{E}
\longrightarrow 0
\]
gives the  exact sequence
\begin{equation*} \label{transpose-3}
0 \to \big ( \displaystyle \bidual^r_\Lambda U_{L_\infty, \chi} \big )^{\ast}
\to  \mathcal{E}^\ast
\to  \EE^1_\Lambda \Big ( \big ( \bidual^r_\Lambda U_{L_\infty, \chi} \big ) / \mathcal{E}  \Big)
\stackrel{\kappa}{\to} \EE^1_\Lambda \big (  \bidual^r_\Lambda U_{L_\infty, \chi} \big )
\to 0.
\end{equation*}
Then, since the $\Lambda$-module $\mathcal{E}^\ast$ is free of rank one, the latter exact 
 sequence can be used to calculate that $ \Fitt^0_\Lambda (\ker (\kappa))$ coincides with the ideal of $\Lambda$ given by 
 \begin{align*}
 I & : = \im \Big (  \big ( \displaystyle \bidual^r_\Lambda U_{L_\infty, \chi} \big )^{\ast} \to \mathcal{E}^\ast \cong \Lambda \Big ) =\big \{ f (\eta^{b, \chi}_{L_\infty / K, S}) \mid f \in \big ( \displaystyle \bidual^r_\Lambda U_{L_\infty, \chi} \big )^{\ast} \big\}.
 \end{align*}
 The ideal $I$ is the isomorphic image of the reflexive module $ \big ( \displaystyle \bidual^r_\Lambda U_{L_\infty, \chi} \big )^{\ast}$ and therefore reflexive as well. In particular, it is uniquely determined by its localisations at primes of height at most one of $\Lambda$ 
(see, for example, \cite[Lem.\@ C.13]{Sakamoto20}). 
 To establish the first part of claim (ii) it now suffices to prove that $I_{\wp} = \im ( \eta^{b, \chi}_{L_\infty / K, S})_\wp$ for every prime ideal $\wp \subseteq \Lambda$ of height at most one. This follows from the fact that the cokernel of the natural map
 \[
 \exprod^r_\Lambda U_{L_\infty, \chi}^\ast \to  \big ( \exprod^r_\Lambda U_{L_\infty, \chi}^\ast \big )^{\ast \ast}  = \big ( \displaystyle \bidual^r_\Lambda U_{L_\infty, \chi} \big )^{\ast}
 \]
identifies with $\EE^2_\Lambda (X)$ for a certain $\Lambda$-module $X$ (see \cite[Prop.\@ (5.4.9)\,(iii)]{NSW}) and therefore, since $\Lambda$ is Gorenstein, vanishes after localising at $\wp$.
 \medskip \\
Having proved the first equality in claim (ii), the final assertion will follow if we can show that 
   $\EE^1_\Lambda ( \bidual^r_\Lambda U_{L_\infty, \chi})$ vanishes after localising at any prime $\wp \subseteq \Lambda$ of height at most two. However, this follows from the general fact that for any finitely generated $\Lambda$-module $X$, and any such prime ideal $\wp$, one has $\EE^1_\Lambda(X^\ast)_\wp = 0$. To see this one only needs to note that the $\Lambda$-linear dual of any fixed projective presentation
\[
\Pi_1 \stackrel{f}{\longrightarrow} \Pi_0 \longrightarrow  X \longrightarrow 0
\]
of $X$ gives an exact sequence that induces an isomorphism between $\EE^1_\Lambda ( X^\ast)$ and 
 $\EE^3_\Lambda (\coker f^\ast)$, 
 and the latter group vanishes after localisation at $\wp$ since $\Lambda$ is Gorenstein. \medskip\\
This concludes the proof of Theorem \ref{classical-imc-result}. \qed

\subsection{Reformulation in terms of determinants}

In this subsection we explain how one can deduce new evidence towards the validity of 
Conjecture \ref{IMCgeneral} from Theorem \ref{classical-imc-result}. We remark that the idempotent $e_\chi$ associated to a character $\chi \: \Delta \to \overline{\Q_p}^\times$ will satisfy Hypothesis \ref{hypsoule}\,(i)\,(b) since $p$ is odd and, if $\chi$ validates \ref{bss-hypothesis}\,(i), then it will also satisfy Hypothesis \ref{hypsoule}\,(i)\,(c). 

\begin{theorem} \label{det-imc-result}
Assume that the Rubin-Stark Conjecture holds for all finite abelian extensions of $K$ and that the character $\chi$ validates Hypothesis \ref{bss-hypothesis}. If we set
\[
\mathfrak{A} = \{ x \in Q(\Lambda) \mid x \cdot \Fitt^0_\Lambda ( \Cl (L_\infty)_\chi) \subseteq 
\Fitt^0_\Lambda ( \Cl (L_\infty)_\chi) \},
\]
then all of the following claims are valid:
\begin{enumerate}[label=(\roman*)]
\item $\mathfrak{A}$ is a $\Lambda$-order inside $Q (\Lambda)$,
\item $\mathfrak{A}$ is contained in $\Lambda [\frac1p]$ and coincides with $\Lambda$ if either $G$ has no $p$-torsion or the $\mu$-invariant of $\Cl (L_\infty)_\chi$ vanishes,
\item the higher-rank equivariant Iwasawa Main Conjecture for $\GG_m$  (Conjecture \ref{IMCgeneral}) holds for the $\Lambda$-order $\mathfrak{A}$.
\end{enumerate}
\end{theorem}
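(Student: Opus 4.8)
The plan is to derive all three assertions from Theorem~\ref{classical-imc-result} together with the commutative algebra of the ring $\Lambda = \Z_p[\im\chi]\llbracket\Gamma\rrbracket[P]$. We first recall, from the proof of Theorem~\ref{classical-imc-result}(ii) and in particular (\ref{fitting-ideal-span}), that $\Fitt^0_\Lambda(\Cl(L_\infty)_\chi)$ is a nonzero ideal of $\Lambda$ that contains a non-zero-divisor. Claim~(i) is then the standard fact that the ring of multipliers $\mathfrak{A}$ of such an ideal is a $\Lambda$-order: it is integral over $\Lambda$ by the determinant trick, and module-finite because it is contained in $f^{-1}\Lambda$ for any non-zero-divisor $f$ in the ideal. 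For the first part of~(ii) note that $\chi$ has order prime to $p$ (as $\Delta = G/P$ is the prime-to-$p$ quotient of $G$), so that $\mathcal{O} := \Z_p[\im\chi]$ is unramified over $\Z_p$; consequently $\Lambda[\tfrac1p]$ is a finite product of normal domains, being obtained from the localisation $\mathcal{O}\llbracket\Gamma\rrbracket[\tfrac1p]$ of the regular ring $\mathcal{O}\llbracket\Gamma\rrbracket$ by adjoining the group $P$, whose order is now invertible. In particular $\Lambda[\tfrac1p]$ is integrally closed in $Q(\Lambda)$, and since $\mathfrak{A}$ is integral over $\Lambda$ we get $\mathfrak{A} \subseteq \Lambda[\tfrac1p]$; when $P$ is trivial $\Lambda$ is itself normal and hence $\mathfrak{A} = \Lambda$.

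For the remaining part of~(ii) one argues locally. The ring $\Lambda$ is Gorenstein of Krull dimension $d+1$ and has a unique height-one prime $\wp_0$ containing $p$, since $\Lambda/p\Lambda = (\mathcal{O}/p)\llbracket\Gamma\rrbracket[P]$ has a unique minimal prime, namely its nilradical, with quotient the domain $(\mathcal{O}/p)\llbracket\Gamma\rrbracket$. The hypothesis that the $\mu$-invariant of $\Cl(L_\infty)_\chi$ vanishes amounts to the statement that $\Fitt^0_\Lambda(\Cl(L_\infty)_\chi)$ is not contained in $\wp_0$ (both say that $\Cl(L_\infty)_\chi/\wp_0\Cl(L_\infty)_\chi$ is torsion over $(\mathcal{O}/p)\llbracket\Gamma\rrbracket$), and under this hypothesis $(\Fitt^0_\Lambda(\Cl(L_\infty)_\chi))_{\wp_0} = \Lambda_{\wp_0}$, so that $\mathfrak{A}_{\wp_0} = \Lambda_{\wp_0}$; at height-one primes not containing $p$ one has $\mathfrak{A}_\wp = \Lambda_\wp$ because $\Lambda_\wp$ is a localisation of the normal ring $\Lambda[\tfrac1p]$. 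Since $\mathfrak{A}$ is a reflexive $\Lambda$-module it is the intersection of its localisations at height-one primes, and these coincide with those of $\Lambda$, so $\mathfrak{A} = \Lambda$.

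To prove~(iii) we apply Conjecture~\ref{IMCgeneral} with the idempotent $\varepsilon = e_\chi$, which (as noted at the start of this subsection) validates Hypothesis~\ref{hypsoule}(i)(b) because $p$ is odd, and (i)(c) because $\chi \neq 1$, and for which the associated ``$\Lambda\varepsilon$'' is the ring $\Lambda$ of the present section. The $\mathfrak{A}$-module $\mathfrak{A}\otimes_\Lambda{\det}^{-1}_\Lambda(\rgamma(\cO_{K,S},\Z_p(1)_{L_\infty/K}))$ is free of rank one; using the basis $w$ of $Y_K(e_\chi\cdot\Z_{p,L_\infty/K})$ to identify it with $\mathfrak{A}\otimes_\Lambda{\det}^{-1}_\Lambda(C_{\infty,\chi})$ and the representative $P \xrightarrow{\theta} P$ of $C_{\infty,\chi}$ from (\ref{quad rep}), one fixes a $\Lambda$-basis $\delta$ of the determinant module and computes — via \cite[Lem.~A.7]{sbA}, exactly as in the proof of Theorem~\ref{classical-imc-result}(i) — that $\pi_\infty(\delta) = z$ for an element $z \in \bidual^r_\Lambda U_{L_\infty,\chi}$ with $\im(z)^{\ast\ast} = (\Fitt^0_\Lambda(\Cl(L_\infty)_\chi))^{\ast\ast}$. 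On the other hand, by Theorem~\ref{classical-imc-result}(i) the element $\eta^{b,\chi}_{L_\infty/K,S}$ — which is the element denoted $\eta^w_{L_\infty/K,S}$ in Conjecture~\ref{IMCgeneral} — lies in $\bidual^r_\Lambda U_{L_\infty,\chi}$ and satisfies $\Fitt^0_\Lambda(\Cl(L_\infty)_\chi) \subseteq \im(\eta^{b,\chi}_{L_\infty/K,S})$; as the left-hand side contains a non-zero-divisor, both $\eta^{b,\chi}_{L_\infty/K,S}$ and $z$ are $Q(\Lambda)$-generators of $Q(\Lambda)\otimes_\Lambda\bidual^r_\Lambda U_{L_\infty,\chi}$, and so $\eta^{b,\chi}_{L_\infty/K,S} = \mu\cdot z$ for a unique $\mu \in Q(\Lambda)^\times$. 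It therefore suffices to show $\mu \in \mathfrak{A}^\times$, since then $\fz := \mu\cdot\delta$ is the required $\mathfrak{A}$-basis with $\pi_\infty(\fz) = \eta^{b,\chi}_{L_\infty/K,S}$. Now $\im(\eta^{b,\chi}_{L_\infty/K,S}) = \mu\cdot\im(z)$, and Theorem~\ref{classical-imc-result}(ii) — by way of the identity $\im(\eta^{b,\chi}_{L_\infty/K,S})^{\ast\ast} = \Fitt^0_\Lambda(\ker(\kappa))$, the relation (\ref{classical-imc-2}), and the vanishing of $\EE^1_\Lambda(\bidual^r_\Lambda U_{L_\infty,\chi})$ at primes of height at most two — yields $\im(\eta^{b,\chi}_{L_\infty/K,S})^{\ast\ast} = (\Fitt^0_\Lambda(\Cl(L_\infty)_\chi))^{\ast\ast}$. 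Hence $\mu\cdot J = J$ for the reflexive fractional ideal $J := (\Fitt^0_\Lambda(\Cl(L_\infty)_\chi))^{\ast\ast}$, so that $\mu$ is a unit of the ring of multipliers of $J$; a codimension-one comparison (using that over the one-dimensional Gorenstein rings $\Lambda_\wp$ every torsion-free module is reflexive) identifies this ring of multipliers with $\mathfrak{A}$, and therefore $\mu \in \mathfrak{A}^\times$, as required.

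The main obstacle is precisely this last step of~(iii). Theorem~\ref{classical-imc-result} controls the Rubin--Stark element $\eta^{b,\chi}_{L_\infty/K,S}$ and the determinant-theoretic element $z$ only up to the reflexive hulls of their image ideals — equivalently, only away from codimension $\geq 2$ — because exterior-power biduals are well-behaved solely at height-one primes. Upgrading this to a statement about the order $\mathfrak{A}$ attached to the \emph{honest} ideal $\Fitt^0_\Lambda(\Cl(L_\infty)_\chi)$, rather than to its reflexive hull, is what requires exploiting that $\Lambda$ is Gorenstein and analysing its (possible) failure of regularity at the unique height-one prime above $p$ — exactly the place where the condition $P \neq 1$, and hence the $\mu$-invariant hypothesis appearing in~(ii), enters the picture. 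The analogous codimension-one bookkeeping also underlies the non-formal part of~(ii).
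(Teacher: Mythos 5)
Your arguments for claims (i) and (ii) are essentially sound: (i) is the same multiplier-ring argument as the paper, and for (ii) you take a slightly different (and clean) route via integrality of $\mathfrak{A}$ over $\Lambda$ and normality of $\Lambda[\tfrac1p]$, plus a local analysis at the unique height-one prime above $p$ in the $\mu = 0$ case. (Your appeal to ``$\mathfrak{A}$ is reflexive'' at the end of (ii) is unjustified but also unnecessary: it suffices that the Gorenstein ring $\Lambda$ satisfies $(S_2)$, so that $\Lambda = \bigcap_{{\rm ht}(\wp)\le 1}\Lambda_\wp$ inside $Q(\Lambda)$, and that $x \in \mathfrak{A}$ lies in each $\Lambda_\wp$.)

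Claim (iii), however, has a genuine gap at the decisive step. You only control the elements up to reflexive hulls: you record $\im(z)^{\ast\ast} = \Fitt^0_\Lambda(\Cl(L_\infty)_\chi)^{\ast\ast}$ and deduce $\im(\eta^{b,\chi}_{L_\infty/K,S})^{\ast\ast} = \Fitt^0_\Lambda(\Cl(L_\infty)_\chi)^{\ast\ast}$ from the \emph{statement} of Theorem \ref{classical-imc-result}, so from $\eta^{b,\chi}_{L_\infty/K,S} = \mu\cdot z$ you only get that $\mu$ preserves the reflexive fractional ideal $J := \Fitt^0_\Lambda(\Cl(L_\infty)_\chi)^{\ast\ast}$. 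Your ``codimension-one comparison'' identifying $(J:J)$ with $\mathfrak{A} = (I:I)$, $I := \Fitt^0_\Lambda(\Cl(L_\infty)_\chi)$, does not work: since $I$ is not known to be reflexive, agreement of $I$ and $J$ at height-one primes only gives $\mu I \subseteq \bigcap_\wp I_\wp = J$, not $\mu I \subseteq I$, and over the non-normal ring $\Lambda$ (the case $P \neq 1$, which is exactly the case the theorem is designed to cover) the multiplier ring of an ideal can be strictly smaller than that of its reflexive hull; there is no general principle identifying the two, and your closing paragraph in effect concedes that this upgrade is the real difficulty without supplying it. The paper closes this step by never passing to reflexive hulls: the proof-internal identities (\ref{bss-limit-step-1}), (\ref{limit-comp}), (\ref{zeta-element-1}) and (\ref{zeta-element}) give the \emph{honest} equalities of ideals $\{f(\eta^{b,\chi}_{L_\infty/K,S}) \mid f \in \exprod^r_\Lambda P^\ast\} = \Fitt^0_\Lambda(\Cl(L_\infty)_\chi) = \{f(z) \mid f \in \exprod^r_\Lambda P^\ast\}$ (here the finite-level input \cite[Cor.~3.6]{bss3}, passed to the limit, is an equality on the nose, not merely up to pseudo-null error), whence $z = x\cdot \eta^{b,\chi}_{L_\infty/K,S}$ forces $x\cdot\Fitt^0_\Lambda(\Cl(L_\infty)_\chi) = \Fitt^0_\Lambda(\Cl(L_\infty)_\chi)$ and so $x \in \mathfrak{A}^\times$ directly. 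To repair your argument you would need to invoke these lattice-level identities (i.e.\ work with the ideals $\{f(\cdot)\mid f\in\exprod^r_\Lambda P^\ast\}$ rather than with $\im(\cdot)$ and its double dual), at which point the reflexive-hull detour becomes unnecessary.
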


\begin{proof} 
To prove claim (i), we first recall that $\Fitt^0_\Lambda ( \Cl (L_\infty)_\chi)$ spans $Q (\Lambda)$ over $Q (\Lambda)$, see (\ref{fitting-ideal-span}). It follows that $\mathfrak{A}$ spans $Q (\Lambda)$ and that the ideal $\Fitt^0_\Lambda ( \Cl (L_\infty)_\chi)$ must contain a non-zero divisor $y$, hence $\mathfrak{A} \subseteq y^{-1} \Fitt^0_\Lambda ( \Cl (L_\infty)_\chi)$ is finitely generated over $\Lambda$. \medskip \\
To prove claim (ii), we note that if $x$ belongs to $\mathfrak{A}$,
then there is a non-zero divisor $y \in \Lambda$ such that $f := yx$ is an element of $\Lambda$
and we claim that $f \in y \Lambda [\frac 1 p]$.
The latter is an inclusion of invertible $\Lambda [\frac{1}{p}]$-modules and can therefore be checked locally at height-one primes $\p$ of $\Lambda [\frac1p]$ (see \cite[Lemma~5.3]{Flach-survey}). 
Each such height-one prime $\p$ can be identified with a height-one prime of $\Lambda$ that does not contain $p$. The localisation $\Lambda_\p$ is then a discrete valuation ring (cf.\@ \cite[\S 3C1]{bks2}) and so the ideal $\Fitt^0_\Lambda (\Cl (L_\infty)_\chi)_\p$ is principal and generated by a non-zero divisor $a$, say. It follows that $fa \Lambda_\p \subseteq y a \Lambda_\p$, hence $f \Lambda_\p \subseteq y \Lambda_\p$ as claimed. The same proof also shows the second assertion of claim (ii) since, assuming either of the stated conditions, the ideal $\Fitt^0_\Lambda (\Cl (L_\infty)_\chi)_\p$ is also principal for every height-one prime $\p$ of $\Lambda$ that contains $p$, as required to prove claim (ii).  \medskip \\
To prove claim (iii) we fix a $\Lambda$-basis $\fz \in \det^{-1}_\Lambda (C_{\infty, \chi})$ and set $z : = \pi_\infty (\fz)$. Due to the injectivity of $\pi_\infty$ it suffices to show that, over $\mathfrak{A}$, the elements $z$ and $\eta^{b, \chi}_{L_\infty / K, S}$ generate the same submodule of $\bidual^r_\Lambda U_{L_\infty, \chi}$. We have already observed in the proof of Theorem \ref{classical-imc-result} that $\eta^{b, \chi}_{L_\infty / K, S}$ generates a free module of rank one over $\Lambda$. 
Thus, both $z$ and $\eta^{b, \chi}_{L_\infty / K, S}$ span $\bidual^r_\Lambda U_{L_\infty, \chi}$ over $Q (\Lambda)$. Consequently, there is $x \in Q (\Lambda)$ such that $z = x \cdot \eta^{b, \chi}_{L_\infty / K, S}$ and we need to show that $x \in \mathfrak{A}^\times$. Notice that
\[
\{ f (\eta^{b, \chi}_{L_\infty / K, S} )  \mid f \in \exprod^r_\Lambda P^\ast \} \cdot x = \{ f (z) \mid f \in \exprod^r_\Lambda P^\ast \},
\]
hence claim (iii) follows from the observations (\ref{images}), (\ref{limit-comp}),  (\ref{zeta-element-1}) and (\ref{bss-limit-step-1})
made in the proof of Theorem \ref{classical-imc-result}.  
\end{proof}

\section{Evidence for the Soul\'e-Stark Conjecture}\label{ss section}

In this section we explain the relation of the Soul\'e-Stark Conjecture (Conjecture \ref{conj}) to a variety of results and conjectures in the literature. This gives a better understanding of Conjecture \ref{conj} and allows us to  
interpret several important existing results as supporting evidence for both the Soul\'e-Stark Conjecture and the finer Congruence Conjecture (Conjecture \ref{cc conj}). In the setting of CM abelian varieties we also find that this  approach clarifies aspects of the work of B\"uy\"ukboduk and Lei in \cite{BL1} and \cite{BL2} and combines with the results of Theorems \ref{descent} and \ref{det-imc-result} to give new evidence in support of the equivariant Tamagawa Number Conjecture.

\subsection{Totally real and CM fields}\label{tr cm section}

In this section we fix a totally real field $K$ and a finite abelian CM extension $L$ of $K$ with Galois group $G$.

 We consider the motive $(h^0(L)(j)/K,e_j^\pm \QQ[G])$ from Example \ref{exrank}(ii) and take the idempotent $\varepsilon$ in Hypotheses \ref{hypcong} and \ref{hypsoule} to be a suitable choice of $e^\pm := \frac{1\pm c}{2}$.

We remark that the conjecture of Solomon \cite{solomon} that occurs in claim (iv) of the following result is an explicit reciprocity law for Rubin-Stark elements that extends (conjecturally) the classical explicit reciprocity laws of Artin-Hasse and Iwasawa \cite{iwasawa}.

\begin{proposition}\label{evi}\
\begin{enumerate}[label=(\roman*)]
\item When $j\leq 0$, Conjecture ${\rm SS}(h^0(L)(j)/K,e_j^- \QQ[G])$ is valid.
\item When $j>1$, Conjecture ${\rm SS}(h^0(L)(j)/K,e_j^- \QQ[G])$ is equivalent to the $p$-adic Beilinson conjecture of Besser-Buckingham-de Jeu-Roblot \cite[Conj.\@ 3.18(3)]{BBJR}.
\item When $K=\QQ$, Conjecture ${\rm SS}(h^0(L)(j)/K,e_j^+ \QQ[G])$ is valid.
\item Conjecture ${\rm CC}(h^0(L)(1)/K,e^- \QQ[G],L,n)$ implies Solomon's Congruence Conjecture \cite[{${\rm CC}(L/K, S, p, n -1)$}]{solomon}.
\end{enumerate}
\end{proposition}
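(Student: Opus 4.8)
The plan is to treat the four claims in turn, in each case transporting the relevant comparison between the Soul\'e--Stark element $\beta_S^b(T)$ and the Bloch--Kato element $\eta_S^b(T)$ (or their mod-$p^n$ reductions) into the language of Rubin--Stark elements and $p$-adic $L$-functions, where the desired identity is either known or is exactly the stated conjecture. Throughout one uses Example \ref{exsoule}: for $T = e_j^-\ZZ_p(j)_{L_\infty/K}$ the Soul\'e--Stark element is the image of the Stickelberger element $\theta_{L_\infty/K,S}(0)$ under the twisting map $\sigma\mapsto\chi_{\mathrm{cyc}}(\sigma)^j\overline\sigma$, while for $T=e_j^+\ZZ_p(j)_{\QQ(\zeta_f)/\QQ}$ it is the Deligne--Soul\'e cyclotomic element $e_j^+c_{1-j}(\zeta_f)$; on the other side, $\eta_S^b(T)$ is by construction the Bloch--Kato element attached to the leading term of the $L$-function of $h^0(L)(j)$ at $s=0$, equivalently (after the identification \eqref{rs} and Remark \ref{RSremark}) a Rubin--Stark element for $\ZZ_p(1-j)$.

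For claim (i), with $j\le 0$ and $\varepsilon=e_j^-$, the point is that the interpolation property of the Deligne--Ribet $p$-adic $L$-function identifies the image of $\theta_{L_\infty/K,S}(0)$ under the twist $\sigma\mapsto\chi_{\mathrm{cyc}}(\sigma)^j\overline\sigma$ with the Stickelberger element $\theta_{L/K,S}(1-j)\cdot(\text{Euler factors})$, which is precisely (the relevant component of) the value defining $\eta_S^b(e_j^-\ZZ_p(j)_{L/K})$; since for $j\le 0$ the Rubin--Stark element in rank zero/one in this setting is the Stickelberger element itself, the two sides agree. Here one also invokes the remark after Remark \ref{sigma} that the Integrality Conjecture is known in this case (Deligne--Ribet), so that $\beta_S^b(T)$ is defined. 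For claim (ii), with $j>1$, the Bloch--Kato element is governed by the leading term at a point where the motive has a nontrivial regulator, so $\eta_S^b(e_j^-\ZZ_p(j)_{L/K})$ involves the syntomic/$p$-adic regulator of the relevant motivic cohomology; matching this against the Stickelberger-twist description of $\beta_S^b(T)$ is exactly the assertion that the $p$-adic $L$-value equals the $p$-adic regulator times the algebraic part, i.e.\@ the $p$-adic Beilinson conjecture of \cite[Conj.\@ 3.18(3)]{BBJR}. One should check that the formulation in \cite{BBJR} is literally the equality of the two elements of ${\bigcap}^r_{\cR_p}H^1(\cO_{K,S},\ZZ_p(1-j))$ produced here, modulo the standard comparison isomorphisms, so that the equivalence goes both ways.

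For claim (iii), with $K=\QQ$ and $\varepsilon=e_j^+$, the Soul\'e--Stark element is $e_j^+c_{1-j}(\zeta_f)$ by Example \ref{exsoule}(i), and the required identity $\eta_S^b(T)=\beta_S^b(T)$ is precisely the statement, proved by Beilinson and by Huber--Wildeshaus (cf.\@ \cite[\S 3.1]{HK}), that the Deligne--Soul\'e cyclotomic elements realise the motivic zeta element over abelian fields; equivalently it is the comparison, in Huber--Kings' proof of the TNC for Tate motives, between their norm-compatible system of cyclotomic elements and the leading-term element. Again, the Integrality Conjecture here is known by \cite[Thm.\@ 4.1]{bks2-2}, so $\beta_S^b(T)$ is well defined. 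For claim (iv), with $j=1$, the Bloch--Kato element $\eta_S^b(e^-\ZZ_p(1)_{L/K})$ is built from the leading term of the Dedekind-type $L$-function at $s=0$ and, via \eqref{rs}, is (a twist of) the Rubin--Stark element $\eta_{L/K,S}^b$; the mod-$p^n$ congruence ${\rm CC}(h^0(L)(1)/K,e^-\QQ[G],L,n)$ then asserts that the reduction of this Rubin--Stark element equals ${\rm tw}_{T,n}^r$ applied to the $\varepsilon$-part of the Rubin--Stark element at level $L$ twisted by $\chi_{\mathrm{cyc}}$, i.e.\@ a congruence relating the Rubin--Stark element for $L$ to the Stickelberger element mod $p^n$. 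Solomon's conjecture \cite[${\rm CC}(L/K,S,p,n-1)$]{solomon} is exactly such a congruence (his explicit reciprocity law for Rubin--Stark elements, reducing mod $p^{n-1}$), and the plan is to unwind both ${\rm tw}_{T,n}^r$ (as constructed in Lemma \ref{twist modpn}) and Solomon's map and check they coincide, with the shift from $n$ to $n-1$ accounted for by the $p \mid f$ normalisation and the precise indexing in \cite{solomon}.

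The main obstacle I expect is bookkeeping of normalisations rather than any deep new input: in each part one must verify that the comparison isomorphisms implicit in the definition of $\eta_S^b(T)$ (period maps, the $p$-adic regulator, Bloch--Kato exponentials, and the identification \eqref{rl=bidual rem}) are set up so that the statement in \cite{BBJR}, respectively \cite{solomon}, matches \emph{on the nose} — including Euler factors at $S$, the twist $\sigma\mapsto\chi_{\mathrm{cyc}}(\sigma)^j\overline\sigma$, and the index shift $n\leftrightarrow n-1$ in claim (iv). For claim (iv) in particular, the delicate step is to show that ${\rm tw}_{T,n}^r$ coincides with Solomon's $p$-adic reciprocity map, which requires carefully tracking the auxiliary set $\Sigma$ of Remark \ref{sigma} and the self-injectivity argument of \cite[Prop.\@ 2.4]{bss} used to define ${\rm tw}_{T,n}^r$; everything else reduces to the already-cited theorems of Deligne--Ribet, Beilinson, Huber--Wildeshaus and Huber--Kings.
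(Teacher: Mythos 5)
Your treatment of claims (i)--(iii) follows essentially the same (citation-based) route as the paper: (i) via Example \ref{exsoule}\,(ii) and the Deligne--Ribet interpolation property, (ii) by matching the statement against the $p$-adic Beilinson conjecture of \cite{BBJR} (the paper does this by citing \cite[Rem.\@ 3.10]{sbA2}), and (iii) via the Deligne--Soul\'e elements and the results recorded in \cite{HK}; the only small omission is that for $j>0$ in (iii) the required input is Kato's explicit reciprocity law (\cite[Thm.\@ 3.2.6]{HK}), not only the Beilinson--Huber--Wildeshaus results, and in (iv) the right-hand side of the congruence is a rank-$[K:\QQ]$ Bloch--Kato element, not ``the Stickelberger element mod $p^n$'' (the basic rank is zero only in cases (i) and (ii)).

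For claim (iv), however, there is a genuine gap. Solomon's conjecture is not a congruence in the module $e^-{\bigcap}_{\ZZ/p^n[G]}^r H^1(\cO_{L,S},\ZZ/p^n)$ in which the Congruence Conjecture lives; it is an equality of homomorphisms in $\Hom_{\ZZ_p[G]e^-}\bigl({\bigwedge}_{\ZZ_p[G]e^-}^r e^-U_{L_p},\, e^-\ZZ/p^n[G]\bigr)$, that is, a statement about the images of the relevant elements under the product ${\rm rec}_p$ of the local reciprocity maps at the $p$-adic places (here $U_{L_p}$ denotes the semi-local units at $p$ and the target of ${\rm rec}_p$ is the Galois group of the maximal abelian pro-$p$ extension of $L$ unramified outside $S$). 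The missing idea in your plan is the construction of the induced map ${\rm rec}_p^\ast$ on exterior power biduals, from $e^-{\bigcap}_{\ZZ_p[G]}^r H^1(\cO_{L,S},\ZZ_p)=e^-{\bigcap}_{\ZZ_p[G]}^r \Hom_{\rm cont}(\Gamma_{L,S},\ZZ_p)$ to $e^-{\bigcap}_{\ZZ_p[G]}^r \Hom_{\ZZ_p}(U_{L_p},\ZZ_p)$, together with the observation that Solomon's equality is exactly what one obtains by applying ${\rm rec}_p^\ast$ to both sides of the CC-congruence (\ref{solomonversion}); this is why CC \emph{implies} Solomon (and only implies it, since ${\rm rec}_p^\ast$ need not be injective). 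Your proposed ``delicate step'' --- showing that the twisting map ${\rm tw}_{1,n}^r$ coincides with Solomon's reciprocity map --- cannot work as stated: the twisting map is a global map between cohomology modules induced by the character $\chi_{\rm cyc}$, whereas Solomon's map involves localisation at $p$ followed by local reciprocity, so the two maps have different sources and targets and are not to be identified. The correct mechanism is to post-compose the CC-equality with the additional map ${\rm rec}_p^\ast$, not to equate the two constructions.
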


\begin{proof} Much of this result is essentially known and so, for brevity, we shall omit many details. 

In this way, we simply note that claim (i) follows from Example \ref{exsoule}\,(ii) and the well-known interpolation property of the Deligne-Ribet $p$-adic $L$-function, that claim (ii) follows from  the observation in \cite[Rem.\@ 3.10]{sbA2} and that claim (iii) follows from Example \ref{exsoule}\,(i) and the results of Beilinson-Huber-Wildeshaus (see \cite[Thm.\@ 5.2.1 and 5.2.2]{HK}) for $j\leq 0$ and Kato (see \cite[Thm.\@ 3.2.6]{HK}) for $j>0$.

Finally, we note that claim (iv) is proved in the following way. We set $r:=[K:\QQ]$. Note that Conjecture ${\rm CC}(h^0(L)(1)/K,e^- \QQ[G],L,n)$ predicts the equality
\begin{eqnarray}\label{solomonversion}
{\rm tw}_{1,n}^r (\eta_S^w(e^+\cdot \ZZ_{p,L/K})) = \overline{\eta_S^b(e^-\cdot \ZZ_p(1)_{L/K}) }
\end{eqnarray}
in $e^-{\bigcap}_{\ZZ/p^n[G]}^r H^1(\cO_{L,S}, \ZZ/p^n)$, where
$${\rm tw}_{1,n}^r \: e^+{\bigcap}_{\ZZ_p[G]}^r H^1(\cO_{L,S}, \ZZ_p(1)) \to e^-{\bigcap}_{\ZZ/p^n[G]}^r H^1(\cO_{L,S}, \ZZ/p^n)$$
is induced by the cyclotomic character $\chi_{\rm cyc} \: G\to {\rm Aut}(\mu_{p^n})\cong (\ZZ/p^n)^\times$. We set $U_{L_p}:=\varprojlim_n (\cO_L \otimes_\ZZ \ZZ_p)^\times/p^n $ and let $\Gamma_{L,S}$ be the Galois group of the maximal abelian pro-$p$ extension of $L$ unramified outside $S$. Let ${\rm rec}_p \: U_{L_p} \to \Gamma_{L,S}$ be the product of local reciprocity maps at primes above $p$. It induces a map
$${\rm rec}_p^\ast \: e^- {\bigcap}_{\ZZ_p[G]}^r H^1(\cO_{L,S},\ZZ_p) = e^- {\bigcap}_{\ZZ_p[G]}^r \Hom_{\rm cont}(\Gamma_{L,S},\ZZ_p) \to e^- {\bigcap}_{\ZZ_p[G]}^r \Hom_{\ZZ_p}(U_{L_p}, \ZZ_p).$$
The natural modulo $p^n$ version is also denoted by the same symbol.
Note that we have a canonical isomorphism $\Hom_{\ZZ_p}(X,\ZZ_p) \cong \Hom_{\ZZ_p[G]}(X,\ZZ_p[G])$ for any $\ZZ_p[G]$-module $X$. 
Hence by the definition of exterior power biduals we have an identification
$$e^- {\bigcap}_{\ZZ_p[G]}^r \Hom_{\ZZ_p}(U_{L_p}, \ZZ_p) = \Hom_{\ZZ_p[G]e^-} \left( {\bigwedge}_{\ZZ_p[G]e^-}^r e^- U_{L_p}, e^-\ZZ_p[G]\right).$$
Solomon's conjecture predicts an equality of homomorphisms $ {\bigwedge}_{\ZZ_p[G]e^-}^r e^- U_{L_p}\to e^-\ZZ/p^n[G]$, so it is an equality in $\Hom_{\ZZ_p[G]e^-} \left( {\bigwedge}_{\ZZ_p[G]e^-}^r e^- U_{L_p}, e^-\ZZ/p^n[G]\right)$.
One checks that it is equivalent to the equality
$${\rm rec}_p^\ast\left( {\rm tw}_{1,n}^r (\eta_S^w(e^+ \cdot\ZZ_{p,L/K}))\right) ={\rm rec}_p^\ast\left( \overline{\eta_S^b(e^-\cdot \ZZ_p(1)_{L/K})} \right),$$
which is obviously implied by (\ref{solomonversion}).
\end{proof}

\begin{remark} The basic rank is equal to zero in the cases of Proposition \ref{evi}\,(i) and (ii) and equal to one in the case of Proposition \ref{evi}\,(iii). In the case of Proposition \ref{evi}(iv), the basic rank is equal to $[K:\QQ]$, and so it gives evidence for the Congruence Conjecture in the higher basic rank case (see, for example, the extensive evidence in support of Solomon's Conjecture obtained by Roblot and Solomon in \cite{rs}).
\end{remark}

\subsection{CM abelian varieties and Hecke characters} \label{BL-section}

The Soul\'e-Stark Conjecture for CM elliptic curves and, more generally, for Hecke characters (as in Example \ref{exrankone}\,(ii)) has essentially been studied by many authors including Coates-Wiles \cite{CW}, Kato \cite{katolecture}, Kings \cite{kings}, Tsuji \cite{tsuji} and Bars \cite{bars}. In particular, the questions of Tsuji in \cite[\S 11]{tsuji} explicitly describe Conjecture ${\rm SS}(M/K,R)$ in this case.

In this section we shall apply our approach in the general setting of CM abelian varieties (as outlined in Example \ref{exrankone}\,(iii)), and thereby shed new light on work of B\"uy\"ukboduk and Lei in \cite{BL1} and \cite{BL2}.

In the remainder of this section we will therefore assume that $K$ is a CM field and we set 
\[ g := \tfrac12 [K : \Q].\]

\subsubsection{The Perrin-Riou-Stark Conjecture} \label{Perrin-Riou-Stark-section}

We first make an observation concerning  a conjecture of B\"uy\"ukboduk and Lei. 

To state the conjecture we fix a character $\chi \: G_K \to \overline{\Q_p}^\times$ of the absolute Galois group $G_K$ of $K$ that has finite prime-to-$p$ order, is not  equal to the Teichm\"uller character $\omega$ and is such that $\chi (\mathfrak{p}) \neq 1$ for every $p$-adic place $\mathfrak{p}$ of $K$. 

We write $L$ for the field cut out by the character $\chi$, take $L_\infty$ to be the composite of $L$ with the maximal $\Z_p$-power extension of $K$, and set 
$\Lambda := \Z_p [\im \chi] \llbracket \Gal (L_\infty/L) \rrbracket$. Let $S$ be the finite set of places $S_\infty (K) \cup S_\mathrm{ram} (L / K) \cup S_p (K)$. As before, we use the notation 
\[ U_{L_\infty, \chi} := \varprojlim_F ( \Z_p \cdot \mathcal{O}_{F, S}^\times )_\chi,\]
where $F$ ranges over all finite extensions in $L_\infty / L$. We also fix a $\Lambda$-basis $b$ of the module  $Y_K(\ZZ_{p,L_\infty/K})_\chi$.

\begin{conjecture}[B\"uy\"ukboduk-Lei] \label{perrin-riou-stark}
There exists a subset $\{\mathfrak{S}^\chi_{\infty,i}\}_{1\le i\le g}$ of $U_{L_\infty, \chi}$ with the property that, for every finite extension $F$ of $L$ in $L_\infty$, the natural map    
\[
 {\bigwedge}^g_\Lambda U_{L_\infty, \chi} \to {\bigwedge}^g_{\Z_p [\im \chi] [\Gal (F / L)]} ( \Z_p \cdot \mathcal{O}_{F, S}^\times)_\chi \to  {\bigcap}^g_{\Z_p [\im \chi] [\Gal (F / L)]} ( \Z_p \cdot \mathcal{O}_{F, S}^\times)_\chi
\]
sends ${\bigwedge}_{i=1}^{i=g}\mathfrak{S}^\chi_{\infty,i}$ to the $\chi$-component $\eta^{b, \chi}_{F / K, S}$ of the relevant Rubin-Stark element.
\end{conjecture}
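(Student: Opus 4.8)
The plan is to deduce Conjecture \ref{perrin-riou-stark} from Theorem \ref{classical-imc-result}\,(i), the one substantial extra input being the freeness of $U_{L_\infty,\chi}$ as a $\Lambda$-module.

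First I would check that the data $(\chi,L,L_\infty,S)$ of \S\ref{Perrin-Riou-Stark-section} is of the type considered in \S\ref{mc Gm section}: the group $G:=\Gal(L/K)$ has order prime to $p$, so that $P$ is trivial and $\Delta=G$; the group $\Gal(L_\infty/L)$ is isomorphic to $\Z_p^d$ with $d\geq 1$ since $L_\infty$ contains the cyclotomic $\Z_p$-extension of $K$; and no finite place of $K$ splits completely in $L_\infty$. Because $K$ is a CM field, every place in $S_\infty(K)$ is complex and hence has trivial decomposition subgroup, so that $\chi(v)=1$ for all $v\in S_\infty(K)$; consequently the integer $r=|\{v\in S_\infty(K)\mid \chi(v)=1\}|$ occurring in Theorem \ref{classical-imc-result} coincides with $g=\tfrac12[K:\Q]$, and in particular Hypothesis \ref{bss-hypothesis}\,(iv) holds. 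After checking the remaining clauses of Hypothesis \ref{bss-hypothesis} for $\chi$ (which are either part of the standing hypotheses of \S\ref{Perrin-Riou-Stark-section} or easily arranged) and assuming, as in Theorem \ref{classical-imc-result}, the validity of the Rubin-Stark Conjecture for all finite abelian extensions of $K$, I obtain from Theorem \ref{classical-imc-result}\,(i) a canonical element $\eta^{b,\chi}_{L_\infty/K,S}\in\bidual^g_\Lambda U_{L_\infty,\chi}$ that is, by construction, the limit of the norm-coherent family $(\eta^{b,\chi}_{F/K,S})_F$ of $\chi$-components of Rubin-Stark elements.

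Granting next that $U_{L_\infty,\chi}$ is free of rank $g$ over $\Lambda$, the canonical map $\exprod^g_\Lambda U_{L_\infty,\chi}\to\bidual^g_\Lambda U_{L_\infty,\chi}$ is an isomorphism onto a free $\Lambda$-module of rank one, and every element of the top exterior power of a free module of rank $g$ is a pure wedge; hence $\eta^{b,\chi}_{L_\infty/K,S}=\mathfrak{S}^\chi_{\infty,1}\wedge\cdots\wedge\mathfrak{S}^\chi_{\infty,g}$ for suitable elements $\mathfrak{S}^\chi_{\infty,i}\in U_{L_\infty,\chi}$. To conclude, I would verify that, for every finite extension $F$ of $L$ in $L_\infty$, the image of this wedge under the composite map of Conjecture \ref{perrin-riou-stark} equals $\eta^{b,\chi}_{F/K,S}$. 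This is a formal consequence of the norm-coherence built into Theorem \ref{classical-imc-result}\,(i), of the compatibility of exterior power biduals with the base change $\Lambda\to\Z_p[\im\chi][\Gal(F/L)]$ (as in \cite[Lem.\@ B.15]{Sakamoto20}), and of the naturality of the transition maps used to define $\eta^{b,\chi}_{L_\infty/K,S}$; the argument here is of the same standard type as that of Lemma \ref{comm}.

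The main obstacle is therefore the freeness of $U_{L_\infty,\chi}$. Using the representative $P\xrightarrow{\theta}P$ of $C_{\infty,\chi}$ from (\ref{quad rep}) one has $U_{L_\infty,\chi}\cong\ker\theta$ by (\ref{can iso 1}), whereas (\ref{can iso 2}) exhibits $\coker\theta=H^2(C_{\infty,\chi})$ as the direct sum of $\Cl(L_\infty)_\chi$ and the free $\Lambda$-module $Y_K(\Z_{p,L_\infty/K})_\chi$ of rank $g$ (freeness of the latter being the content of Hypothesis \ref{hypsoule}\,(i)\,(b), valid for $e_\chi$ since $p$ is odd). Splitting off this free summand identifies $\im\theta$ with a first syzygy of $\Cl(L_\infty)_\chi$ over a finitely generated free $\Lambda$-module, and then the exact sequence $0\to U_{L_\infty,\chi}\to P\to\im\theta\to 0$ shows that $U_{L_\infty,\chi}$ is free as soon as $\im\theta$ is projective, i.e.\@ as soon as $\Cl(L_\infty)_\chi$ has projective dimension at most one over the regular local ring $\Lambda$. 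Since $\Cl(L_\infty)_\chi$ is $\Lambda$-torsion, this amounts to $\Cl(L_\infty)_\chi$ being a Cohen-Macaulay $\Lambda$-module of dimension $\dim\Lambda-1$ --- a genuine non-degeneracy property of this Iwasawa module whose verification in the present generality should make essential use of the standing hypothesis that $\chi(\mathfrak{p})\neq 1$ for every $p$-adic place $\mathfrak{p}$ of $K$. Establishing this unconditionally is, I expect, the hard part; failing that, one would carry the freeness of $U_{L_\infty,\chi}$ as an explicit hypothesis and record the cases in which it is known.
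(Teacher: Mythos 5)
The statement you were asked about is a conjecture (due to B\"uy\"ukboduk--Lei) that the paper itself does not prove unconditionally; its own treatment is Proposition \ref{bl problems}, whose part (i) is precisely your conditional argument: granting the Rubin--Stark Conjecture, the construction in Theorem \ref{classical-imc-result}\,(i) provides the limit element $\eta^{b,\chi}_{L_\infty/K,S}$ in ${\bigcap}^g_\Lambda U_{L_\infty,\chi}$ whose corestrictions are the $\eta^{b,\chi}_{F/K,S}$, and if $U_{L_\infty,\chi}$ is $\Lambda$-free (necessarily of rank $g$, by the generic-rank computation via the representative (\ref{quad rep})) this element is a pure wedge, which yields the conjectured elements $\mathfrak{S}^\chi_{\infty,i}$. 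Your closing analysis of the freeness hypothesis also matches the paper's: Proposition \ref{bl problems}\,(ii) establishes freeness when $\Cl(L)_\chi$ vanishes, part (iii) constrains $\Cl(L_\infty)_\chi$ when it does not, and Remark \ref{BL-errors} records that the general freeness claim of \cite[Lem.~4.11]{BL1} is not currently established --- so, exactly as you anticipate, the conjecture remains open and your proposal is a correct proof only of the conditional statement, in essentially the same way as the paper.
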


\begin{remark} The above conjecture is first formulated in \cite[Conj.~4.14]{BL1}, where it is referred to as the \textit{Perrin-Riou-Stark Conjecture}, and also plays an important role in the subsequent articles of 
 B\"uy\"ukboduk and Lei \cite{BL2} and of  B\"uy\"ukboduk \cite{buyukboduk-3} (in the latter of which the conjecture is referred to as the \textit{Strong Rubin-Stark Conjecture}). The conjecture also makes an appearance in \cite[\S 3.4.4]{buyukboduk-4}. \end{remark} 

In connection to the following result, we refer the reader to Remark \ref{BL-errors} below. 

\begin{proposition}\label{bl problems} 
Assuming the hypotheses as stated above, the following claims are valid.
\begin{enumerate}[label=(\roman*)]
\item If $U_{L_\infty, \chi}$ is a free $\Lambda$-module, then the validity of Conjecture \ref{perrin-riou-stark} follows directly from the validity of the Rubin-Stark Conjecture.
\item If $\Cl (L)_\chi$ vanishes, then $U_{L_\infty, \chi}$ is a free $\Lambda$-module.
\item If $\Cl (L)_\chi$ does not vanish and $U_{L_\infty, \chi}$ is a free $\Lambda$-module, then the height of the annihilator of $\Cl (L_\infty)_\chi$ in $\Lambda$ is at most two. 
\end{enumerate}
\end{proposition}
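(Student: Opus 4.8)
The plan is to treat the three claims in sequence, reducing each to a structural statement about the complex $C_{\infty,\chi}$ (or its finite-level analogues) combined with the duality/reflexivity machinery already developed for the proof of Theorem \ref{classical-imc-result}.

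For claim (i), the key observation is that if $U_{L_\infty,\chi}$ is $\Lambda$-free of rank $g$, then the canonical map $\xi^g_{U_{L_\infty,\chi}}\colon \exprod^g_\Lambda U_{L_\infty,\chi}\to \bidual^g_\Lambda U_{L_\infty,\chi}$ is an isomorphism, so that an exterior product $\bigwedge_{i=1}^{g}\mathfrak{S}^\chi_{\infty,i}$ exists and maps to the family $(\eta^{b,\chi}_{F/K,S})_F$ precisely when that Rubin-Stark family lies in the image of $\exprod^g_\Lambda U_{L_\infty,\chi}$ inside $\varprojlim_F \bidual^g_{\Z_p[\im\chi][\Gal(F/L)]}(\Z_p\cdot\mathcal{O}_{F,S}^\times)_\chi$. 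I would first use the identification of this inverse limit with $\bidual^g_\Lambda U_{L_\infty,\chi}$ (exactly as in the proof of claim (i) of Theorem \ref{classical-imc-result}, via Sakamoto's \cite[Lem.~B.15]{Sakamoto20}); then the norm-coherent family $(\eta^{b,\chi}_{F/K,S})_F$ already defines the element $\eta^{b,\chi}_{L_\infty/K,S}\in\bidual^g_\Lambda U_{L_\infty,\chi}$, which under the $\Lambda$-freeness hypothesis is pulled back along $\xi^g$ to a genuine element of $\exprod^g_\Lambda U_{L_\infty,\chi}$; choosing any $\Lambda$-basis of $U_{L_\infty,\chi}$ one reads off the $\mathfrak{S}^\chi_{\infty,i}$. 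The only genuine input needed is that $\eta^{b,\chi}_{F/K,S}$ lies in the Rubin-Stark lattice, which is exactly the Rubin-Stark Conjecture for the finite extensions $F/K$, so claim (i) follows.

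For claim (ii), I would exploit the quadratic representative $P\xrightarrow{\theta}P$ of $C_{\infty,\chi}$ from \eqref{quad rep} together with \eqref{can iso 1} and \eqref{can iso 2}: one has $U_{L_\infty,\chi}\cong\ker(\theta)$ and a split exact sequence exhibiting $\coker(\theta)$ as an extension of $Y_K(\Z_{p,L_\infty/K})_\chi$ (which is $\Lambda$-free by Hypothesis \ref{hypsoule}(i)(b)) by $\Cl(L_\infty)_\chi$. The vanishing of $\Cl(L)_\chi$ forces $\Cl(L_\infty)_\chi$ to vanish by a Nakayama-type argument over the local ring $\Lambda$ (descent of the Iwasawa module along $\Lambda\to\Z_p[\im\chi]$), hence $\coker(\theta)$ is $\Lambda$-free; then $\theta$ splits and $\ker(\theta)=U_{L_\infty,\chi}$ is a direct summand of the free module $P$, hence $\Lambda$-free. (Alternatively one invokes that $U_{L_\infty,\chi}$ has projective dimension at most one and is torsion-free, so freeness is equivalent to $\Cl(L_\infty)_\chi=0$ since $\Lambda$ is a regular — indeed complete — local ring in this $\chi$-good situation.)

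For claim (iii), the strategy is to dualise the tautological sequence $0\to\mathcal{E}\to\bidual^g_\Lambda U_{L_\infty,\chi}\to(\bidual^g_\Lambda U_{L_\infty,\chi})/\mathcal{E}\to 0$ with $\mathcal{E}=\Lambda\cdot\eta^{b,\chi}_{L_\infty/K,S}$, exactly as in the proof of Theorem \ref{classical-imc-result}(ii), and combine the resulting identity $\Fitt^0_\Lambda(\Cl(L_\infty)_\chi)_\wp=\im(\eta^{b,\chi}_{L_\infty/K,S})_\wp$ at height-$\le 1$ primes with the non-vanishing of $\Cl(L_\infty)_\chi$ and the fact — forced by $\Lambda$-freeness of $U_{L_\infty,\chi}$ — that $\bidual^g_\Lambda U_{L_\infty,\chi}=\exprod^g_\Lambda U_{L_\infty,\chi}$ is $\Lambda$-free of rank one and $\mathcal{E}$ is a free rank-one submodule. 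If the annihilator of $\Cl(L_\infty)_\chi$ had height $\ge 3$ then $\Cl(L_\infty)_\chi$ would be pseudo-null of codimension $\ge 3$, whence (since $\Lambda$ is Gorenstein of dimension $d+1$) $\Fitt^0_\Lambda(\Cl(L_\infty)_\chi)$ would contain a regular sequence of length $\ge 3$; one then plays this against the reflexivity of $I=\im(\eta^{b,\chi}_{L_\infty/K,S})^{\ast\ast}$ established in Theorem \ref{classical-imc-result}(ii), which forces the supporting codimension to be at most two, yielding a contradiction. The main obstacle is the final step: one must carefully track which Fitting-ideal identities survive localisation at height-two primes and argue that a pseudo-null module of codimension $\ge 3$ is incompatible with the reflexive ideal $I$ arising as a quotient determinant; the cleanest route is probably to observe that $\EE^1_\Lambda(\bidual^g_\Lambda U_{L_\infty,\chi})$ vanishes at primes of height $\le 2$ (proved inside Theorem \ref{classical-imc-result}(ii)) and feed this into the exact sequence to get $\EE^1_\Lambda((\bidual^g_\Lambda U_{L_\infty,\chi})/\mathcal{E})_\wp=\ker(\kappa)_\wp$ at height-two $\wp$, from which \eqref{classical-imc-2} gives $\Fitt^0_\Lambda(\Cl(L_\infty)_\chi)_\wp=\Lambda_\wp$ only if $\Cl(L_\infty)_\chi$ is zero locally at every height-$\le 2$ prime — contradicting non-vanishing unless the annihilator meets a height-$\le 2$ prime, i.e.\ has height $\le 2$.
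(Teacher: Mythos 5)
Your claims (i) and (ii) follow the paper's route, but (i) has a real omission: the hypothesis is only that $U_{L_\infty,\chi}$ is $\Lambda$-free, yet you start from ``free of rank $g$'' and then ``read off'' the elements $\mathfrak{S}^\chi_{\infty,i}$ from a basis. Since $\xi^g$ is bijective for free modules of any rank, the point that actually needs proof is decomposability: Conjecture \ref{perrin-riou-stark} demands a single wedge $\bigwedge_{i=1}^{g}\mathfrak{S}^\chi_{\infty,i}$, and this is automatic only if the rank is exactly $g$. The paper supplies this by showing $Q(\Lambda)\otimes_\Lambda U_{L_\infty,\chi}$ is free of rank $g$, using the vanishing $\Lambda$-Euler characteristic of $C_{\infty,\chi}$ (via the representative \eqref{quad rep}), the torsionness of $\Cl(L_\infty)_\chi$ and the freeness of $Y_K(\Z_{p,L_\infty/K})_\chi$; you should add this step. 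Your main argument for (ii) (descent isomorphism $\Cl(L_\infty)_\chi\otimes_\Lambda\Z_p[\im\chi][\Gal(L/K)]\cong\Cl(L)_\chi$ plus Nakayama, then splitting the four-term sequence) is the paper's proof; note however that your parenthetical alternative, ``freeness is equivalent to $\Cl(L_\infty)_\chi=0$'', is false: from the same four-term sequence, freeness of $U_{L_\infty,\chi}$ is equivalent to $\mathrm{pd}_\Lambda\,\Cl(L_\infty)_\chi\le 2$, which is precisely why claim (iii) is not vacuous.

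For claim (iii) there is a genuine gap, and you partly acknowledge it. The contradiction you aim for does not materialise: if $\mathrm{Ann}_\Lambda(\Cl(L_\infty)_\chi)$ had height $\ge 3$, then $\Cl(L_\infty)_\chi$ would simply vanish after localisation at every prime of height $\le 2$; this is entirely compatible with the module being non-zero globally, with the reflexivity of $I=\im(\eta^{b,\chi}_{L_\infty/K,S})^{\ast\ast}$, and with the Fitting-ideal identities of Theorem \ref{classical-imc-result}, so no contradiction with ``non-vanishing'' arises. Indeed, those identities are proved without any freeness assumption on $U_{L_\infty,\chi}$, so they cannot detect the hypothesis that drives (iii); your route would also smuggle in the Rubin--Stark Conjecture (needed even to place $\eta^{b,\chi}_{L_\infty/K,S}$ in the bidual lattice), an assumption the statement does not make for (iii). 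The argument that works uses the freeness directly: since $U_{L_\infty,\chi}\cong\ker(\theta)$ and $Y_K(\Z_{p,L_\infty/K})_\chi$ are free, the four-term exact sequence attached to \eqref{quad rep}, combined with the split sequence (\ref{can iso 2}), can be rearranged into a projective resolution of $\Cl(L_\infty)_\chi$ of length two, so $\mathrm{pd}_\Lambda\,\Cl(L_\infty)_\chi\le 2$; the module is non-zero because $\Cl(L)_\chi\neq 0$ and the descent isomorphism from (ii); and then for any minimal associated prime $\mathfrak{q}$ one has $\mathrm{height}(\mathfrak{q})=d+1-\dim(\Lambda/\mathfrak{q})\le d+1-\mathrm{depth}(\Cl(L_\infty)_\chi)=\mathrm{pd}_\Lambda(\Cl(L_\infty)_\chi)\le 2$ by the Auslander--Buchsbaum formula, whence the annihilator, being contained in $\mathfrak{q}$, has height at most two.
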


\begin{proof} 
At the outset we note $\chi$ is a faithful character of $\Gal (L / K)$ and therefore
satisfies Hypothesis \ref{bss-hypothesis}\,(iii) as a consequence of our assumption $\chi (\p) \neq 1$ for every $\p$-adic place $\p$ of $K$. 
Since moreover $\chi \not= \omega$, the construction of Theorem \ref{classical-imc-result}\,(i) shows that, for every finite extension $F$ of $L$ in $L_\infty$, the natural corestriction map    
\[
 {\bigcap}^g_\Lambda U_{L_\infty, \chi} \to {\bigcap}^g_{\Z_p [\im \chi] [\Gal (F / L)]} ( \Z_p \cdot \mathcal{O}_{F, S}^\times)_\chi \]
sends the element $\eta^{b,\chi}_{L_\infty/K,S}$ to $\eta^{b, \chi}_{F / K, S}$. 
To prove claim (i) it is therefore enough to show that, if $U_{L_\infty, \chi}$ is a free $\Lambda$-module, in which case the natural map ${\bigwedge}^g_\Lambda U_{L_\infty, \chi} \to {\bigcap}^g_\Lambda U_{L_\infty, \chi}$ is bijective (cf.\@ \cite[Lem.~A.1]{sbA}), then $\eta^{b, \chi}_{F / K, S}$ has the form predicted by Conjecture \ref{perrin-riou-stark}.\\
To prove this it is in turn enough to show that $Q(\Lambda) \otimes_\Lambda U_{L_\infty, \chi}$ is a free $Q(\Lambda)$-module of rank $g$ since then $U_{L_\infty, \chi}$ must (if free) be a free $\Lambda$-module of rank $g$ and so every element of ${\bigwedge}^g_\Lambda U_{L_\infty, \chi}$ is of the form ${\bigwedge}_{i=1}^{i=g}u_i$ for a suitable subset $\{u_i\}_{1\le i\le g}$ of $U_{L_\infty, \chi}$.\\
The key point now is that the $\Lambda$-Euler characteristic of the complex $C_{L_\infty, \chi}$ used in the proof of Theorem \ref{classical-imc-result} vanishes (as follows directly from the representative (\ref{quad rep})). Indeed, since $\Cl (L_\infty)_\chi$ is a torsion $\Lambda$-module and $Q(\Lambda) \otimes_\Lambda Y_K (\Z_{p, L_\infty / K})_\chi$ is a free $Q(\Lambda)$-module of rank $g$, this observation combines with the explicit descriptions of cohomology given in (\ref{can iso 1}) and (\ref{can iso 2}) and the fact that $Q(\Lambda)$ is a finite product of fields to imply that the $Q(\Lambda)$-module $Q(\Lambda) \otimes_\Lambda U_{L_\infty, \chi}$ is also free of rank $g$, as required.\medskip\\
To prove (ii) we observe that the natural projection isomorphism $C_{L_\infty, \chi} \otimes^\mathbb{L}_\Lambda \Z_p [\Gal (L / K)] \cong C_{L, \chi}$ induces an isomorphism of $\ZZ_p[\Gal(L/K)]$-modules $H^2 (  C_{L_\infty, \chi}) \otimes_\Lambda \Z_p [\Gal (L / K)] \cong H^2 ( C_{L, \chi})$. Then, since the $\Lambda$-module $Y_K ( \Z_{p, L_\infty / K})_\chi$ is free, the exact sequence (\ref{can iso 2}) implies that the latter isomorphism restricts to give an isomorphism $\Cl (L_\infty)_\chi \otimes_\Lambda \Z_p [\Gal (L / K)] \cong \Cl (L)_\chi$. Given this isomorphism, Nakayama's Lemma implies $\Cl (L_\infty)_\chi$  vanishes if $\Cl (L)_\chi$ vanishes. In this case, therefore, the cokernel of the endomorphism $\theta$ in (\ref{quad rep}) is $Y_K ( \Z_{p, L_\infty / K})_\chi$ and so, in particular, is $\Lambda$-free. By splitting up the four-term exact sequence associated to (\ref{quad rep}) into short exact sequences, we can therefore deduce that $\ker (\theta) \cong U_{L_\infty, \chi}$ is $\Lambda$-projective, and hence free, as required. \medskip \\
To prove claim (iii) we assume $U_{L_\infty, \chi}$ is a free $\Lambda$-module. Then, since the $\Lambda$-module $Y_K ( \Z_{p, L_\infty / K})_\chi$ is also free, the four-term exact sequence associated to (\ref{quad rep}) can in this case be  
adjusted to give a projective resolution of $\Cl (L_\infty)_\chi$ which shows that the projective dimension of the $\Lambda$-module $\Cl (L_\infty)_\chi$ is at most two. 
If we now assume that $\Cl (L)_\chi = \Cl (L_\infty)_\chi \otimes_\Lambda \Z_p [\Gal (L / K)]$ is non-zero, then also $\Cl (L_\infty)_\chi$ is non-zero. 
The assertion of claim (iii) is therefore true because any minimal associated prime $\mathfrak{q}$ of the non-zero module $\Cl (L_\infty)_\chi$ belongs to $\mathrm{Supp} ( \Cl (L_\infty)_\chi)$ and satisfies 
\[ {\rm height}(\mathfrak{q}) = d+1-\dim ( \Lambda / \mathfrak{q}) \leq 
d+1-\mathrm{depth} ( \Cl (L_\infty)_\chi) = \mathrm{pd} ( \Cl (L_\infty)_\chi) \le 2.
\]
Here we write $d$ for the rank of $\Gal(L_\infty/L)$ so that the first equality is obvious, the first inequality is true because $\dim ( \Lambda / \mathfrak{q}) \geq \mathrm{depth} ( \Cl (L_\infty)_\chi) $ (cf.\@ \cite[Thm.\@ 17.2]{matsumura}) and
the second equality follows directly from the Auslander-Buchsbaum formula. 
\end{proof}

\begin{remark}\label{BL-errors} 
The result of \cite[Lem.~4.11]{BL1} asserts that $U_{L_\infty, \chi}$ is a free $\Lambda$-module. Unfortunately, however, there is an error in the argument given in \textit{loc.\@ cit.\@} (and alluded to in \cite[Rem.~2.16]{buyukboduk-3}) that is yet to be fixed. In this context, the point of Proposition \ref{bl problems}\,(i) is that whenever $U_{L_\infty, \chi}$ is a  free $\Lambda$-module the validity of the Perrin-Riou-Stark Conjecture follows directly from other standing assumptions that are made in each of the articles \cite{BL1}, \cite{BL2} and \cite{buyukboduk-3}. In general, Proposition \ref{bl problems}\,(iii) shows that, if ${\rm Cl}(L)_\chi$ does not vanish, then the freeness of $U_{L_\infty, \chi}$ imposes a strong bound on the height of the annihilator over $\Lambda$ of $\Cl (L_\infty)_\chi$ (thereby showing that the latter module cannot be `too small'). For comparison, we note that a result of Sharifi \cite[Cor.~4.3]{sharifi} implies the annihilator of the class group $\Cl (L^{\rm max}_\infty)$ over the Iwasawa algebra associated to the maximal $\Z_p$-power extension $L^{\rm max}_\infty$ of $L$ can have arbitrarily large height.
 \end{remark}

\subsubsection{The explicit reciprocity conjecture of B\"uy\"ukboduk and Lei}\label{exp rec BL}

To review this conjecture we fix a principally polarised abelian variety $A$ over $K$ that has complex multiplication by $K$. We assume that $K$ contains the reflex field of $A$ and that the index of $\text{End} (A)$ inside the maximal order $\mathcal{O}$ of $K$ is coprime to $p$.  We note that the latter assumption implies that the Tate module $T_p A = \varprojlim_n A [p^n]$ of $A$ is free of rank one over the $\Z_p$-order $\bigO_p  := \Z_p\otimes_\Z \bigO$ (see \cite[Rem., p.~502]{SerreTate}). 
For each finite Galois extension $L$ of $K$, and each $p$-adic prime $\p$ of $L$, we write\begin{align*}
\res_\p \: H^1 ( \bigO_{K, S}, (V_p A)_{L/ K}) \cong H^1 ( \bigO_{L, S}, V_p A)  \to H^1 ( L_\p, V_p A ),
\end{align*}
for the natural localisation map. Furthermore, we write
\[
\exp^\ast_{\p} \:  H^1 ( L_\p, V_p A ) \to H^1_{/ f} ( L_\p, V_p A )
\stackrel{\cong}{\longrightarrow} \mathrm{Fil}^0 D_{\mathrm{dR}, L_\p} (V_p A)  
\]
for the associated dual exponential map. We shall use the associated homomorphism of $K_p[\Gal(L/K)]$-modules 
\[ {\rm exp}^\ast_{L,p} \: H^1 ( \bigO_{K, S}, (V_p A)_{L/ K}) \xrightarrow{ (\exp^\ast_{\p}\circ\res_\p)_\p} 
\mathrm{Fil}^0 D_{\dR, L_p} (V_p A), 
\]
where we have set
$D_{\dR, L_p} (V_p A) = \bigoplus_{\p\mid p}   D_{\mathrm{dR}, L_\p} (V_p A)$ and
in the direct sum $\p$ runs over all $p$-adic places of $L$ above $\p$.  \medskip \\
We also recall that, if $\sha ( A / L) [p^\infty]$ is finite, then Poitou-Tate duality implies that there is a canonical isomorphism  
\begin{equation}\label{strict selmer} 
H^2 (\bigO_{L, S}, V_pA) \cong \ker \Big (
\Q_p \otimes_\Z A (L) \stackrel{\lambda}{\longrightarrow} 
\bigoplus_{\p \mid p} ( \Q_p \otimes_{\Z_p} A(L_\p)^{\wedge} )\Big )^\ast,
\end{equation}
where in the direct sum $\p$ runs over all $p$-adic places of $L$, $( - )^\wedge$ denotes $p$-adic completion and $\lambda$  denotes the natural diagonal localization map (see, for example, the argument in \cite[Lem.~6.1\,(ii)]{bss2}). 

We now write $K_\infty^\cyc / K$ for the cyclotomic $\Z_p$-extension of $K$. For each natural number $n$ we write $K^\cyc_n$
 for the $n$-th layer of $K_\infty^\cyc / K$ and set $\Gamma^\cyc_n := \Gal (K^\cyc_n / K)$. 

Then the explicit reciprocity conjecture of B\"uy\"ukboduk and Lei predicts that for every $n$ there exists a subset $\{ c_{n,i} \}_{1 \leq i \leq g}$ of $H^1 ( \bigO_{K, S}, (T_p A)_{K_n^\cyc / K})$ with both of the following properties. Each element $c_{n,i}$ is obtained by `twisting' the element $\bigwedge_{1 \leq i \leq g} \mathfrak{S}^\chi_{\infty, i}$ predicted to exist by the Perrin-Riou-Stark Conjecture \ref{perrin-riou-stark} (for a suitable choice of $\chi$) by the character $G_K \to \mathcal{O}_p[\Gamma_n]^\times$ induced by the action of $G_K$ on $(T_p A)^\ast (1)_{K_n^\cyc / K}$. In addition, if one fixes an embedding $\overline{\Q_p} \to \CC$ and uses it to identify the groups $\Hom(\Gamma_n,\overline{\Q_p}^\times)$ and $\Hom(\Gamma_n,\CC^\times)$, then  for every primitive character $\theta\: \Gamma_n \to \overline{\Q_p}^\times$ there is an equality in $\CC_p$ of the general form  
\begin{equation}\label{BL conj} 
{\rm det}\bigl(\bigl[ e_\theta \cdot {\rm exp}^\ast_{K_n^\cyc,p}( c_i), \kappa_{\theta,j}\bigr]_{A,n}\bigr)_{1 \leq i,j \leq g} = L_{\{p\}}(A,\theta^{-1},1)\cdot \Omega_{A,n,p}.
\end{equation}
Here $[-,-]_{A,n}$ denotes a canonical $\CC_p$-bilinear pairing on Dieudonn\'e modules, the elements $\kappa_{\theta,j}$ are obtained from a suitable choice of basis of the relevant Dieudonn\'e module, $L_{\{p\}}(A, \theta^{-1}, s)$ is the $p$-truncated Hasse-Weil-Artin $L$-series attached to $A$ and $\theta^{-1}$ and $\Omega_{A,n,p}$ is the product of the ratio of the canonical $p$-adic and complex periods associated to the pair $(A,\theta)$ with a fudge factor that compensates for the precise choices of elements $\{\mathfrak{S}^\chi_{\infty, i}\}_{1\le i\le g}$ and $\{\kappa_{\theta,j}\}_{1\le j\le g}$. 

Full details concerning the conjecture of B\"uy\"ukboduk and Lei and a precise version of the conjectural equality (\ref{BL conj}) can be found in \cite{BL1}. However, the partial details recalled above are at least sufficient to prove the conjecture is of interest only in the case that the space $e_\theta \big ( \overline{\Q_p} \otimes_{\Z} A ( K_n^\cyc) \big )$ vanishes. This fact is shown by the following result, in which we refer to the Deligne-Gross Conjecture for Hasse-Weil-Artin $L$-series (for a precise statement of which see, for example, \cite[p.\,127]{rohrlich}). 

\begin{proposition} \label{BL-lemma} Assume that the Perrin-Riou-Stark Conjecture (Conjecture \ref{perrin-riou-stark}) is valid and that the Hasse-Weil-Artin $L$-series $L(A,\theta^{-1},s)$ validates the Deligne-Gross Conjecture. 

Then, if $\sha ( A / K_n^\cyc) [p^\infty]$ is finite and $e_\theta \big ( \overline{\Q_p} \otimes_{\Z} A ( K_n^\cyc) \big )$ is non-zero, the explicit reciprocity conjecture of B\"uy\"ukboduk and Lei \cite[Conj.~4.18]{BL1} is valid.  
\end{proposition}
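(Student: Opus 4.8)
The plan is to show that, under the stated hypotheses, both sides of the conjectural equality \eqref{BL conj} vanish.

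We first treat the right hand side. Since $\theta$ has finite order it is unitary, so $\overline{\theta}=\theta^{-1}$, and the fixed embedding $\overline{\Q_p}\hookrightarrow\CC$ identifies $e_\theta\bigl(\overline{\Q_p}\otimes_\ZZ A(K_n^\cyc)\bigr)$ with $e_\theta\bigl(\CC\otimes_\ZZ A(K_n^\cyc)\bigr)$. The assumed validity of the Deligne-Gross Conjecture for $L(A,\theta^{-1},s)$ therefore gives $\ord_{s=1}L(A,\theta^{-1},s)=\dim_\CC e_\theta\bigl(\CC\otimes_\ZZ A(K_n^\cyc)\bigr)>0$, and hence $L(A,\theta^{-1},1)=0$. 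The $p$-truncated series $L_{\{p\}}(A,\theta^{-1},s)$ is obtained from $L(A,\theta^{-1},s)$ by deleting the Euler factors at the primes above $p$; as these factors are products of terms $1-\alpha\, N\p^{-s}$ with $|\alpha|\le (N\p)^{1/2}<N\p$ by the Weil bounds, they do not vanish at $s=1$, and so $L_{\{p\}}(A,\theta^{-1},1)=0$. Since $\Omega_{A,n,p}$ is a well-defined element of $\CC_p$, the right hand side of \eqref{BL conj} vanishes.

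It remains to prove that the $g\times g$ matrix $\bigl[e_\theta\cdot{\rm exp}^\ast_{K_n^\cyc,p}(c_i),\kappa_{\theta,j}\bigr]_{A,n}$ is singular. Since $\{\kappa_{\theta,j}\}_{1\le j\le g}$ is a $\CC_p$-basis of the $g$-dimensional $\theta$-component of $\mathrm{Fil}^0 D_{\dR,(K_n^\cyc)_p}(V_pA)$, it suffices to show that the $g$ vectors $e_\theta\cdot{\rm exp}^\ast_{K_n^\cyc,p}(c_i)$ span a subspace of dimension at most $g-1$, and for this one combines four points. First, the finiteness of $\sha(A/K_n^\cyc)[p^\infty]$ forces the Bloch-Kato Selmer group $H^1_f(\bigO_{K,S},(V_pA)_{K_n^\cyc/K})$ to coincide with $A(K_n^\cyc)\otimes_\ZZ\QQ_p$, whose $\theta$-component is non-zero by hypothesis. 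Second, the kernel of each local dual exponential $\exp^\ast_\p$ contains the finite subspace $H^1_f((K_n^\cyc)_\p,V_pA)$, so this Selmer group lies in the kernel of ${\rm exp}^\ast_{K_n^\cyc,p}$. Third, the $\CC_p$-space $H^1(\bigO_{K,S},(V_pA)_{K_n^\cyc/K})_\theta$ is exactly $g$-dimensional: the underlying rank-one motive has basic rank $g$, its $H^0$ vanishes in the $\theta$-component, and its $H^2$ vanishes there as well, since (by Lemma \ref{lemtwist2} and Remark \ref{extended field remark}) the complex $\rgamma(\bigO_{K,S},(T_pA)_{K_n^\cyc/K})$ is a twist of the $\GG_m$-complex $\rgamma(\bigO_{K,S},\ZZ_p(1)_{L_\infty/K})$, whose second cohomology in the relevant component is the $\Lambda$-torsion class-group module (alternatively one identifies this $H^2$ directly via \eqref{strict selmer} and invokes the finiteness of $\sha$); the asserted dimension then follows from Lemma \ref{compact lemma}\,(i),(ii). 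Fourth, the Perrin-Riou-Stark Conjecture identifies $\bigwedge_{1\le i\le g}c_i$ with the (twist of the) Rubin-Stark element $\bigwedge_{1\le i\le g}\mathfrak{S}^\chi_{\infty,i}$, which is non-zero because the $S$-truncated Artin $L$-function of $\chi$ vanishes at $s=0$ to order $|\{v\in S_\infty(K)\mid \chi(v)=1\}|=g$; hence the $c_i$ form a $\CC_p$-basis of $H^1(\bigO_{K,S},(V_pA)_{K_n^\cyc/K})_\theta$. Granting these four points, ${\rm exp}^\ast_{K_n^\cyc,p}$ restricted to the $g$-dimensional span of the $c_i$ has a non-trivial kernel, containing the non-zero $\theta$-component of $A(K_n^\cyc)\otimes\QQ_p$, so its image has dimension at most $g-1$; the vectors $e_\theta\cdot{\rm exp}^\ast_{K_n^\cyc,p}(c_i)$ are therefore linearly dependent and the left hand side of \eqref{BL conj} vanishes.

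The hard part will be the third point, namely the control of $\dim_{\CC_p}H^1(\bigO_{K,S},(V_pA)_{K_n^\cyc/K})_\theta$, equivalently the rational vanishing of the corresponding second cohomology: this requires following the class-group Iwasawa module through the twisting construction and the descent to the cyclotomic tower, and reconciling the outcome with the description of $H^2$ furnished by \eqref{strict selmer} in the presence of the finiteness of $\sha(A/K_n^\cyc)[p^\infty]$. A subsidiary issue is to confirm that the kernel of each $\exp^\ast_\p$ is precisely the finite subspace $H^1_f((K_n^\cyc)_\p,V_pA)$, i.e.\ that no exceptional-zero phenomenon occurs; this follows from the Weil bounds for the Frobenius eigenvalues on the Dieudonn\'e module of $A$ at the primes above $p$ (so that no such eigenvalue equals $1$ or $p^{-1}$). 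The remaining steps are routine manipulations with the functoriality of dual exponential maps and of the twisting construction recalled in \S\ref{soule-stark-section}.
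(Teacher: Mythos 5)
The treatment of the right hand side of \eqref{BL conj} is fine and matches the paper (Deligne--Gross plus the non-vanishing of $e_\theta\big(\overline{\Q_p}\otimes_\Z A(K_n^\cyc)\big)$ forces $L(A,\theta^{-1},1)=0$, hence $L_{\{p\}}(A,\theta^{-1},1)=0$). The gap is in your ``third point''. You assert that $e_\theta\big(\overline{\Q_p}\otimes_{\Q_p}H^2(\bigO_{K,S},(V_pA)_{K_n^\cyc/K})\big)$ vanishes, so that $H^1(\bigO_{K,S},(V_pA)_{K_n^\cyc/K})_\theta$ is exactly $g$-dimensional, and neither of your sketched justifications delivers this. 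The fact that the Iwasawa-theoretic $H^2$ of the twisted $\GG_m$-complex is $\Lambda$-torsion says nothing about the vanishing, after descent to the finite layer $K_n^\cyc$ and inverting $p$, of the $\theta$-component of $H^2$; and the finiteness of $\sha(A/K_n^\cyc)[p^\infty]$ only yields the identification \eqref{strict selmer}, namely $e_\theta H^2\cong e_\theta\ker(\lambda)^\ast$, which has no reason to vanish --- indeed, the hypothesis $e_\theta\big(\overline{\Q_p}\otimes_\Z A(K_n^\cyc)\big)\neq 0$ is exactly the situation in which $\ker(\lambda)_\theta$ can be non-zero. Without this vanishing your argument collapses: the span of the $g$ classes $c_i$ is at most $g$-dimensional inside a possibly larger $H^1_\theta$, and nothing guarantees it meets the Kummer image of the Mordell--Weil $\theta$-component, so you cannot conclude that $e_\theta\exp^\ast_{K_n^\cyc,p}$ kills anything in that span. (Your fourth point has a related unaddressed issue: even granting $\dim H^1_\theta=g$, the linear independence of the \emph{twisted} classes $c_{n,i}$ in the $\theta$-component at finite level does not follow from the non-vanishing of the Rubin--Stark element for $\chi$.)

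The paper's proof avoids assuming $e_\theta H^2=0$ altogether. Setting $W_\theta=e_\theta(\overline{\Q_p}\otimes_\Z A(L))$ and $W'_\theta=e_\theta(\overline{\Q_p}\otimes_{\Q_p}\ker(\lambda)^\ast)$, it establishes the strict inequality \eqref{first step}, $\dim W_\theta>\dim W'_\theta$, by using the non-degeneracy of the N\'eron--Tate height pairing to compare $\theta$- and $\theta^{-1}$-components and then observing that a non-zero rational point in the relevant rational isotypic piece has non-zero localisation in $\bigoplus_{\p\mid p}(\Q_p\otimes_{\Z_p}A(L_\p)^\wedge)$, spanning a free $\Q[G]\tilde e$-module, so that $\ker(\lambda)$ is a \emph{proper} subspace there. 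Combined with the Euler-characteristic computation of Lemma \ref{compact lemma}\,(i),(ii) (which gives $\dim Z_\theta-\dim W'_\theta=g$), this yields $\dim\coker(\kappa_\theta)<g$; since the Kummer image lies in the kernel of the dual exponential, the image of the \emph{entire} map $e_\theta\exp^\ast_{K_n^\cyc,p}$ has dimension at most $g-1$, so the determinant in \eqref{BL conj} vanishes no matter which classes $c_i$ are fed into it. That strict inequality is the key idea missing from your proposal; you would need to supply it (or an equivalent control of $e_\theta H^2$ relative to $W_\theta$) to repair the argument.
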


\begin{proof} We set $L := K_n^\cyc$, $W_\theta := e_\theta(\overline{\Q_p} \otimes_{\Z} A (L))$ and  $W_\theta' := e_\theta(\overline{\Q_p} \otimes_{\Q_p} \ker(\lambda)^\ast)$ where the map $\lambda$ is as in (\ref{strict selmer}). 
 We also write $\psi$ for the homomorphism $\Gamma_n \to \overline{\QQ}^\times$ obtained by composing $\theta$ with a fixed choice of embedding $\overline{\Q}_p \to \CC$. 

Then the non-vanishing of $W_\theta$ implies that $e_\psi(\overline{\Q}\otimes_\Z A(L))$
 does not vanish and this fact combines with the assumed validity of the Deligne-Gross Conjecture for $L(A,\psi^{-1},s)$ to directly imply that $L(A,\psi^{-1},1)$, and hence also $L_{\{p\}}(A,\psi^{-1},1)$, vanishes.  

To prove the second assertion it is therefore enough to show that the stated assumptions imply the determinant that occurs on the left hand side of the conjectured equality (\ref{BL conj}) vanishes. 

As a first step, we claim that 
\begin{equation}\label{first step} {\rm dim}_{\overline{\Q_p}}(W_\theta) > {\rm dim}_{\overline{\Q_p}}(W_\theta').\end{equation}
To verify this we note that the non-degeneracy of the N\'eron-Tate height pairing implies the $\overline{\QQ_p}[G]$-modules $\overline{\QQ_p} \otimes_\ZZ A(L)$ and $\overline{\QQ_p}\otimes_{\Z}\Hom_\Z(A(L),\Z)$ are (non-canonically) isomorphic and hence that ${\rm dim}_{\overline{\Q_p}}(W_\theta) = {\rm dim}_{\overline{\Q_p}}(W_{\theta^{-1}})$. In addition, if we set $W^\dagger_\theta := e_{\theta^{-1}}(\overline{\Q_p} \otimes_{\Q_p} \ker(\lambda))$, then one has ${\rm dim}_{\overline{\Q_p}}(W_\theta') = {\rm dim}_{\overline{\Q_p}}(W^\dagger_\theta)$. To justify (\ref{first step}) it is therefore enough to show that $W_\theta^\dagger$ is a proper subspace of $W_{\theta^{-1}}$. 

To do this we write $\tilde e$ for the primitive idempotent $\sum_{\psi'}e_{\psi'}$ of $\Q[G]$, where the sum runs over the set of $G_{\Q}$-conjugates of $\psi^{-1}$. Then the assumed non-vanishing of $W_\theta$, and hence also of $W_{\theta^{-1}}$, implies that $e_{\psi^{-1}}\bigl(\overline{\Q}\otimes_\Z A(L))$, as well as the subspace $\widetilde{W}:= \tilde e\bigl(\Q\otimes_\Z A(L))$ of $\Q\otimes_\Z A(L)$, does not vanish. But then, for any non-zero element $w$ of $\widetilde{W}$, the element 
$\lambda(w)$ is non-zero and so spans a free $\Q[G]\tilde e$-submodule of $\bigoplus_{\p \mid p} ( \Q_p \otimes_{\Z_p} A(L_\p)^{\wedge} )$. The element $e_{\psi^{-1}}(\lambda(w))$ is therefore non-zero and so $e_{\psi^{-1}}(1\otimes w)$ is an element of $W_{\theta^{-1}}$ that does not belong to $W_\theta^\dagger$, as required. 

We now set $Z_\theta := e_\theta( \overline{\Q_p} \otimes_{\Q_p} H^1 (\bigO_{K, S}, (V_p A)_{L/ K} ))$ and write 
$\kappa_\theta$ for the natural (injective) Kummer map $W_\theta \to Z_\theta.$ Then one has  

\begin{align*} {\rm dim}_{\overline{\Q_p}}({\rm coker}(\kappa_\theta))=&\, {\rm dim}_{\overline{\Q_p}}(Z_\theta) - {\rm dim}_{\overline{\Q_p}}(W_\theta)\\
<&\, {\rm dim}_{\overline{\Q_p}}(Z_\theta) - {\rm dim}_{\overline{\Q_p}}(W'_\theta)\\ 
 =&\, {\rm dim}_{\overline{\Q_p}}\bigl(e_\theta ( \overline{\Q_p} \otimes_{\Q_p} Y_K ( (V_p A)_{L/ K}))\bigr)\\
 =&\, g\end{align*}
where the first and last equalities are obvious, the inequality follows from (\ref{first step}) and the second equality follows upon combining the vanishing of $H^0 (\bigO_{K, S}, (V_p A)_{L/ K} ) \cong H^0 (\bigO_{L, S}, V_p A)$ with the identification (\ref{strict selmer}) and the general results of Lemma \ref{compact lemma}\,(i) and (ii).    

In particular, since the image of $\kappa_\theta$ belongs to the kernel of  $e_\theta(\overline{\Q_p}\otimes_{\Q_p}(\exp^\ast_{\p} \circ \res_\p))$ for every $p$-adic place $\p$ of $L$, the above inequality implies that the $\overline{\Q_p}$-space $e_\theta\cdot (\overline{\Q_p}\otimes_{\Q_p} \im(\exp^\ast_{L, p}))$ has dimension strictly less than $g$. The elements   
$\{ e_\theta \cdot \exp^\ast_{L, p}(c_i)\}_{1\le i\le g}$ are therefore linearly dependent and so the bilinearity of the pairing $[-,-]_{A,n}$ implies that the determinant on the left hand side of the conjectured equality (\ref{BL conj}) vanishes, as required. 
\end{proof}

\subsubsection{The Soul\'e-Stark Conjecture and explicit reciprocity}

In this section we show that the Soul\'e-Stark Conjecture refines (a reformulated version of) the explicit reciprocity conjecture of B\"uy\"ukboduk and Lei discussed above. We note, in particular, that whilst the formulation of the latter conjecture assumes the validity of Conjecture \ref{perrin-riou-stark}, the approach used here is independent of this conjecture. 
\medskip \\
At the outset we fix an abelian extension $L$ of $K$, set  $G := \Gal ( L / K)$ and consider the motive $M = h^1 ( A / L) (1)$ for an abelian variety $A$ as introduced in \S\ref{exp rec BL}, regarded as defined over $K$ and with coefficients in $K[G]$. To avoid confusion, in the sequel we write $R_0$ instead of $K$ if we consider its complex multiplication action on $A$  and  reserve the notation $K$ for when we mean the field of definition of the motive $M$. We also set $R = R_0 [G]$ and $R_p = \Q_p \otimes_\Q R$.\\
The $p$-adic \'etale realisation $V : = V_p (M)$ of $M$ is then given by the rational Tate module $(V_p A)^\ast (1)_{L / K} = (\Q_p \otimes_{\Z_p} T_p A)^\ast (1)_{L / K}$ of $A$. In addition, since $p$ is assumed to be coprime to the index of $\End (A)$ inside $\mathcal{O}$, the module $(T_p A)_{L / K}$ is free over $\cR_p = \bigO_p [G]$ and so we can choose $T = (T_p A)^\ast (1)_{L / K}$ as a $G_K$-stable lattice inside $V$. \medskip \\
Recall the notation introduced in \S \ref{soule-stark-section}. Fix an $\cR_p$-basis $b = \{b_1, \dots, b_g \}$ of $Y_K (T)$. Since $K$ is totally imaginary, the hypotheses \ref{hypsoule}\,(i)\,(b) and (ii) are automatically satisfied. 
Moreover, we write
\begin{equation} \label{pairing}
[-, -]_\theta \: \CC_p \cdot \det_{R_p} \big ( \mathrm{Fil}^0 D_{\dR, L_p} (V_p A) \big ) 
\times \CC_p \cdot \det_{R_p} \big ( D_{\dR, L_p} ((V_p A)^\ast (1)) / \mathrm{Fil}^0 \big ) \to \CC_p \otimes_\Q K
\end{equation}
for the pairing induced by the composite of the natural pairing
\[
\mathrm{Fil}^0 D_{\dR, L_p} (V_p A) \times D_{\dR, L_p} ((V_p A)^\ast (1)) / \mathrm{Fil}^0 
\to R_p
\]
in $p$-adic Hodge theory and the homomorphism $ R_p \to \Q_p \otimes_\Q K$ induced by a character $\theta \: G \to \overline{\Q_p}^\times$. 

With this notation in place, we can now give a concrete interpretation of Conjecture \ref{conj} as an \textit{explicit reciprocity conjecture} closely related to the conjectural equality (\ref{BL conj}) of B\"uy\"ukboduk and Lei. 

Taking account of Proposition \ref{BL-lemma}, we restrict attention to 
characters $\theta \: G \to \overline{\Q_p}^\times$ such that $e_\theta (\overline{\Q_p} \otimes_{\Z} A (L))$ vanishes. Furthermore, we write
\[
\psi \: G_K \to \cR_p^\times
\]
for the character afforded by the action of $G_K$ on $(T_p A)_{L / K}$. For all $s \in \CC$ such that $\text{Re } (s) > \frac32$, the associated ($S$-truncated) $L$-function is then given by
\[
L_S (\psi, \theta^{-1}, s) = \prod_{v \not \in S}  (1 - (\theta^{-1} \circ \psi) (v) \cdot \mathrm{N} v^{-s} )^{-1} \quad \in \CC_p \otimes_\Q K,
\] 
where by $\theta^{-1}$ we mean the morphism $\CC_p \otimes_\Q R \to \CC_p \otimes_\Q K$ induced by $\theta^{-1}$, and extended to the whole complex plane by analytic continuation.

\begin{proposition} \label{soule-stark-ab-varieties}
Assume $\sha ( A / L) [p^\infty]$ is finite and let $\theta \: G \to \overline{\Q_p}^\times$ be a character for which both  $e_\theta (\overline{\Q_p} \otimes_{\Z} A (L))$ vanishes and $L_S(\psi, \theta^{-1},1)\not= 0$. Fix an $R_p$-basis $\{c_1, \dots, c_g \}$ of $D_{\dR, L_p} ((V_p A)^\ast (1)) / \mathrm{Fil}^0 $ and set $c = \bigwedge_{1 \leq i \leq g} c_i$. \\
Then the $e_\theta$-component of Conjecture ${\rm SS}(M / K, R)$ is valid if and only if in $\CC_p \otimes_\Q K$ one has 
\begin{equation} \label{soule-stark}
 \big [ ({\bigwedge}^g_{R_p}\exp^\ast_{L,p})( \beta^b_S (T)), \, c \big ]_\theta = 
  L_S (\psi, \theta^{-1}, 1) \cdot \frac{\Omega^p(A,\theta)_{c, \omega}}{\Omega(A,\theta)_{b, \omega}}. 
\end{equation}
Here $\Omega^p(A,\theta)_{c, \omega}$ and $\Omega(A,\theta)_{b, \omega}$ are the canonical $p$-adic and complex periods of $A$ and $\theta$, respectively, normalised with respect to the bases $c$ and $\omega$, and $b$ and $\omega$, for any choice of $R$-basis $\omega$ of $H^0 (A, \Omega^1_{A / L})$ (and are defined precisely in the course of the proof below).
\end{proposition}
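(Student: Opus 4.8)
The plan is to establish the asserted equivalence by making the definition of the Bloch-Kato element $\eta^b_S(T)$ from \cite[Def.~4.10]{bss2} explicit on the $e_\theta$-isotypic component, where the relevant Galois cohomology degenerates, and then by invoking Conjecture ${\rm SS}(M/K,R)$ to replace $\eta^b_S(T)$ by $\beta^b_S(T)$. First I would pass to the $e_\theta$-component, working over the field generated by the values of $\theta$ so that $e_\theta$ is an idempotent. Since $T^\ast(1)=(T_pA)_{L/K}$, Shapiro's Lemma and the isomorphism (\ref{strict selmer}) show that the hypotheses that $\sha(A/L)[p^\infty]$ is finite and that $e_\theta(\overline{\Q_p}\otimes_\Z A(L))$ vanishes together force $e_\theta H^2(\cO_{K,S},V^\ast(1))=0$. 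Hence the argument of Lemma \ref{etnc} applies to this component: $e_\theta\rgamma(\cO_{K,S},V^\ast(1))$ is acyclic outside degree one and $e_\theta H^1(\cO_{K,S},V^\ast(1))$ has dimension $g$, so the $e_\theta$-part of $\CC_p\cdot{\bigwedge}^g_{R_p}H^1(\cO_{K,S},V^\ast(1))$ is one-dimensional over $\CC_p$. Moreover the same vanishing together with finiteness of $\sha$ gives $e_\theta H^1_f(L,V_pA)=0$, so the geometric regulator contribution to the fundamental line of $M$ is trivial on this component and the only surviving part of the period isomorphism of $M$ is the dual exponential map $\exp^\ast_{L,p}$ (composed with the de Rham and Betti comparison isomorphisms).

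Next I would define the two period factors and derive the corresponding identity for $\eta^b_S(T)$. Fixing an $R$-basis $\omega$ of $H^0(A,\Omega^1_{A/L})$, I will set $\Omega(A,\theta)_{b,\omega}$ to be the archimedean period comparing $b$ with $\omega$ through the Betti--de Rham comparison isomorphism and the leading term at $s=0$, and $\Omega^p(A,\theta)_{c,\omega}$ to be the $p$-adic comparison factor relating $\omega$ to $c$ through $p$-adic Hodge theory; the complex multiplication structure on $A$ renders both explicit. With these definitions, the defining property of $\eta^b_S(T)$ in \cite[Def.~4.10]{bss2}, combined with the reduction of the period isomorphism to $\exp^\ast_{L,p}$ on the $e_\theta$-part, unwinds precisely to the identity
\[
\big[\,({\bigwedge}^g_{R_p}\exp^\ast_{L,p})(\eta^b_S(T)),\, c\,\big]_\theta
= L_S(\psi,\theta^{-1},1)\cdot\frac{\Omega^p(A,\theta)_{c,\omega}}{\Omega(A,\theta)_{b,\omega}}
\]
in $\CC_p\otimes_\Q K$. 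Here the occurrence of the value $L_S(\psi,\theta^{-1},1)$, rather than a higher leading term, uses $e_\theta H^2(\cO_{K,S},V^\ast(1))=0$, and its identification as a truncated $L$-value is read off from the Euler factors of $M$ at the places outside $S$ after identifying the action of $G$ on $T^\ast(1)$ with $\psi$.

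Finally I would conclude as follows. Because the pairing $[-,-]_\theta$ factors through the $e_\theta$-projection, the linear functional $x\mapsto[({\bigwedge}^g_{R_p}\exp^\ast_{L,p})(x),c]_\theta$ on the one-dimensional $\CC_p$-space $e_\theta\big(\CC_p\cdot{\bigwedge}^g_{R_p}H^1(\cO_{K,S},V^\ast(1))\big)$ sends $e_\theta\eta^b_S(T)$ to $L_S(\psi,\theta^{-1},1)\cdot\Omega^p(A,\theta)_{c,\omega}/\Omega(A,\theta)_{b,\omega}$, which is non-zero by the hypothesis $L_S(\psi,\theta^{-1},1)\neq 0$ and the non-vanishing of the periods; this functional is therefore injective. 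Consequently the $e_\theta$-component of the equality $\eta^b_S(T)=\beta^b_S(T)$ predicted by Conjecture ${\rm SS}(M/K,R)$ holds if and only if $\big[({\bigwedge}^g_{R_p}\exp^\ast_{L,p})(\eta^b_S(T)),c\big]_\theta=\big[({\bigwedge}^g_{R_p}\exp^\ast_{L,p})(\beta^b_S(T)),c\big]_\theta$, and by the displayed identity the left-hand side equals $L_S(\psi,\theta^{-1},1)\cdot\Omega^p(A,\theta)_{c,\omega}/\Omega(A,\theta)_{b,\omega}$; this is exactly the asserted equivalence with (\ref{soule-stark}).

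The main obstacle will be the middle step: identifying the $e_\theta$-part of the fundamental line of $M=h^1(A/L)(1)$ with $\det_{R_p}$ of $e_\theta H^1(\cO_{K,S},V^\ast(1))$, tracking the Betti, de Rham and $p$-adic Hodge comparison isomorphisms together with the complex multiplication normalisations in order to pin down the correct normalisation of the two period factors, and verifying that the general construction of \cite[Def.~4.10]{bss2} really does collapse to the displayed dual-exponential formula in this degenerate (vanishing geometric regulator) situation. The reductions of the first and last steps are routine given Lemma \ref{etnc}, the isomorphism (\ref{strict selmer}), and the one-dimensionality of the relevant $\CC_p$-line.
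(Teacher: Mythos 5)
Your proposal is correct and follows essentially the same route as the paper: using (\ref{strict selmer}) to kill $e_\theta H^2$, unwinding the period-regulator isomorphism on the $e_\theta$-component into the dual exponential map composed with the duality pairing, $p$-adic Hodge comparison and period map (which is exactly how the two period factors are defined), and then concluding via injectivity on the one-dimensional $e_\theta$-line. The only cosmetic difference is that you deduce injectivity of the functional from $L_S(\psi,\theta^{-1},1)\neq 0$, while the paper cites non-degeneracy of $[-,-]_\theta$ and bijectivity of $e_\theta\exp^\ast_{L,p}$; both are valid.
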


\begin{proof} The assumed vanishing of $e_\theta (\overline{\Q_p} \otimes_{\Z} A (L))$ combines with the identification  (\ref{strict selmer}) to imply that the space $e_\theta (\overline{\Q_p}\otimes_{\Q_p}H^2 (\bigO_{K, S}, (V_p A)_{L /K}))$ vanishes and hence that $e_\theta$ is a summand of the \textit{idempotent of admissibility} defined in \cite[Def.\@ 4.8]{bss2}. Given this, the $e_\theta$-component of the \textit{period-regulator isomorphism} $\lambda^{\text{BK}}_{M, b, S, L}$ in the definition of the Bloch-Kato element for $M$ as defined in Def.~4.10 of \textit{loc.\@ cit.\@} coincides with the composite map

\begin{align*}
e_\theta \bigl(\CC_p \cdot \exprod^g_{R_p} H^1 (\bigO_{L, S}, V_p A) \bigr)
& \cong e_\theta\bigl( \CC_p \cdot \exprod^g_{R_p} \mathrm{Fil}^0 D_{\dR, L_p} (V_p A)\bigr) \\
& \cong e_\theta \bigl( \CC_p \cdot \exprod^g_{R_p}  \big (  D_{\dR, L_p}( (V_p A)^\ast (1)) / \mathrm{Fil}^0 \big)^\ast\bigr) \\
& \cong e_\theta \bigl( \CC_p \cdot \exprod^g_{R} H^0 (A, \Omega^1_{A / L}) \bigr)\\
& \cong e_\theta \bigl( \CC_p \cdot \exprod^g_{R_p} Y_K (V)^\ast\bigr) \\
& \cong e_\theta \bigl( \CC_p \otimes_\Q R\bigr),
\end{align*}
where the first isomorphism is induced by ${\bigwedge}^g_{R_p}\exp^\ast_{L,p}$, the second by the duality pairing (\ref{pairing}), the third by the comparison isomorphism of $p$-adic Hodge theory, the fourth by the period map, and the last by our fixed choice of basis $b$ for $Y_K (V)$.  \\
Fix an auxiliary $R$-basis $\omega_1, \dots, \omega_g$ of $H^0 (A, \Omega^1_{A / L})$ and set $\omega := \bigwedge_{1 \leq i \leq g} \omega_i$. We then define $\Omega^p(A)_{c, \omega}$ to be the unique element of $R_p$ with the property that the map
\[
  \exprod^g_{R_p}  D_{\dR, L_p} ((V_p A)^\ast (1)) / \mathrm{Fil}^0 
 \stackrel{\cong}{\longrightarrow}
 \Q_p \cdot \exprod^g_R H^0 (A, \Omega^1_{A / L})^\ast
\]
induced by the comparison isomorphism of $p$-adic Hodge theory sends $c$ to $\Omega^p(A)_{c, \omega} \cdot \omega^\ast$.

 Similarly, we define $\Omega(A)_{b, \omega}$ as the unique element of $\CC_p \otimes_\Q R$ such that the map
\[
\CC_p \cdot \exprod_{R_p}^g Y_K (V) \stackrel{\cong}{\longrightarrow}  \CC_p \cdot \exprod^g_R H^0 (A, \Omega^1_{A / L})^\ast
\]
sends $b$ to $\Omega(A)_{b, \omega} \cdot \omega^\ast$.\medskip \\
Then, with these definitions in place, one has 
\begin{align*}
& e_\theta \Bigl(\lambda^{\mathrm{BK}}_{M, b, S, L} \big ( ({\bigwedge}^g_{R_p}\exp^\ast_{L,p})(  \beta^b_S (T)) \big)\Bigr)\\ = \,&e_\theta \Bigl(\big [ ({\bigwedge}^g_{R_p}\exp^\ast_{L,p}) ( \beta^b_S (T)), \, c \big ] \cdot \frac{\Omega (A)_{b, \omega}}{\Omega^p(A)_{c, \omega}}\Bigr)\\
=\,&\Bigl(\big [e_\theta( ({\bigwedge}^g_{R_p}\exp^\ast_{L,p}) ( \beta^b_S (T))), \, e_\theta(c) \big ]_\theta \cdot \frac{\Omega (A,\theta)_{b, \omega}}{\Omega^p(A,\theta)_{c, \omega}}\Bigr)\cdot e_\theta,
\end{align*}
where $\Omega (A,\theta)_{b, \omega}$ and $\Omega^p(A,\theta)_{c, \omega}$ denote the elements of $\CC_p^\times$ that are defined by the respective equalities $e_\theta\cdot \Omega (A)_{b, \omega} = \Omega (A,\theta)_{b, \omega}\cdot e_\theta$ and $e_\theta\cdot \Omega^p (A)_{b, \omega} = \Omega^p (A,\theta)_{b, \omega}\cdot e_\theta$. \medskip \\
Given the non-degeneracy of the pairing $[-,-]_\theta$ on $e_\theta$-components, the bijectivity of the map 
$e_\theta(\overline{\Q_p}\otimes_{\Q_p}{\bigwedge}^g_{R_p}\exp^\ast_{L,p})$ and the explicit definition of the element $\eta^b_S (T)$, the last displayed equality implies that the equality $e_\theta \cdot \beta^b_S (T) = e_\theta\cdot \eta^b_S (T)$ predicted by the $e_\theta$-component of Conjecture ${\rm SS}(M / K, R)$ is valid if and only if the equality (\ref{soule-stark}) is valid.
\end{proof}

\begin{remark}
If $A$ is an elliptic curve and $K$ an imaginary quadratic field (in this case necessarily of class number one), then (\ref{soule-stark}) is a consequence of the classical reciprocity law of Wiles \cite{Wiles} (see, for example, \cite[Prop.\@ 4.2]{flach-survey-2}).
\end{remark}

\subsubsection{The equivariant Tamagawa Number Conjecture for abelian varieties}\label{etnc AV}

We shall now combine Proposition \ref{soule-stark-ab-varieties} with the results of Theorems \ref{descent} and \ref{det-imc-result} to obtain concrete evidence in support of an important special case of the equivariant Tamagawa Number Conjecture. \medskip \\
To do this we fix data $L/K, G, A$ and $T$ as at the beginning of \S\ref{exp rec BL}. We note that the action of the absolute Galois group $G_K$ of $K$ on $(T_p A)_{L /K}$ gives rise to a character 
\begin{align} \label{character-1}
& \psi  
 \: G_K \longrightarrow  \cR_p^\times,
\end{align}
and we write $L_\infty$ for the abelian extension of $K$ that corresponds to the subgroup $\ker(\psi)$ of $G_K$. 

We note that this definition of $L_\infty$ is consistent with that given in \S \ref{soule-stark-section} since the Weil pairing implies $\psi$ coincides with the character $\chi_T$ constructed in Definition \ref{character-definition}. 
 We further note that the natural injective homomorphism 
\[
\mathcal{O}_p [G] \hookrightarrow \bigoplus_{\theta \in \widehat{G}} \bigO_p [\im \theta];
\quad a \mapsto ( \theta (a))_{\theta \in \widehat{G}},
\]
implies that $\Gal (L_\infty / K)\cong \im(\psi)$ is isomorphic to a subgroup of $\bigoplus_{\theta \in \widehat{G}} \bigO_p [\im \theta]^\times$ and hence has the form $\Z_p^d \times \Delta$ with $d$ a non-negative integer and $\Delta$ a finite abelian group. \\
We write $\Delta'$ for the maximal subgroup of $\Delta$ of order prime-to-$p$ and note that the subset $\Upsilon$ of $\widehat{\Delta'}$ comprising characters that satisfy Hypothesis \ref{bss-hypothesis} is stable under the natural action of $\Gal(\overline{\Q_p}/\Q_p)$ on $\widehat{\Delta'}$ and hence that the idempotent
\[ \varepsilon_{L,1} := \sum_{\chi \in \Upsilon}e_\chi\]
belongs to $\ZZ_p[\Delta'] \subset \ZZ_p\llbracket\Gal(L_\infty/K)\rrbracket$. 


We denote the subset of $\widehat{G}$ comprising all characters $\theta$ for which both $e_\theta (\overline{\Q_p} \otimes_{\ZZ} A (L))$ vanishes and $L_S(\psi, \theta^{-1},1)\not= 0$ by  $\widehat{G}_{0,A}$ and note that the associated idempotent 
\[ \varepsilon_{L,0} := \sum_{\theta\in\widehat{G}_{0,A}} e_\theta  \]
belongs to $\Q[G]\subset \Q_p[G]$. Hence, we obtain an idempotent of $K_p[G]$ by setting  
\[ \varepsilon_{L} := \psi(\varepsilon_{L,1})\cdot\varepsilon_{L,0}.\]

\begin{theorem}\label{final cor} Assume the data $L/K, A$ and $p$ are such that all of the following conditions are satisfied.  

\begin{enumerate}[label=(\roman*)]
\item $A (L)$ has no element of order $p$.
\item $\sha (A / L) [p^\infty]$ is finite.
\item The explicit reciprocity conjecture (\ref{soule-stark}) is valid for all characters $\theta$ in $\widehat{G}_{0,A}$. 
\item If $\psi(G_K)[p^\infty]$ is non-zero, then the $\ZZ_p$-module $\Cl (L_\infty)$ is finitely generated.
\end{enumerate}

Then, if the Rubin-Stark Conjecture holds for all finite abelian extensions of $K$, the equivariant Tamagawa Number Conjecture is valid for the pair $(h^1 (A / L) (1), 
\mathcal{O}_p[G]\varepsilon_{L})$. 
\end{theorem}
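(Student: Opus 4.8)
The plan is to derive the assertion from Theorem \ref{descent}, applied with the field $L_\infty$ fixed in \S\ref{etnc AV} and with the idempotent appearing there taken to be $\varepsilon_{L,1} = \sum_{\chi \in \Upsilon} e_\chi \in \Lambda = \ZZ_p\llbracket\Gal(L_\infty/K)\rrbracket$. Recall from \S\ref{etnc AV} that $L_\infty = \overline{\QQ}^{\ker\psi}$ with $\psi = \chi_T$ (so that this $L_\infty$ is precisely the one occurring in \S\ref{soule-stark-section}) and that $\Gal(L_\infty/K)\cong\ZZ_p^d\times\Delta$; here $d\ge 1$, since $\psi|_{G_L}$ involves $\chi_{\mathrm{cyc}}$ and so has infinite image, and no finite place of $K$ splits completely in $L_\infty$, since the decomposition group in $\Gal(L_\infty/K)$ of any finite place is infinite (by the Weil bound on the Frobenius eigenvalues of $A$ together with the infinite ramification of $\chi_{\mathrm{cyc}}$ above $p$). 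Inspection of the proof of Theorem \ref{descent} shows that, for an idempotent $\varepsilon$ of $\Lambda$ validating only conditions (i)(b) and (i)(c) of Hypothesis \ref{hypsoule}, the same argument establishes the $\psi(\varepsilon)$-component of the equivariant Tamagawa Number Conjecture for $(M,\cR_p)$ as soon as the $\psi(\varepsilon)$-parts of conditions (a)--(c) of that theorem hold (the case $\psi(\varepsilon)=1$ recovering the statement as given). Since $\varepsilon_{L,0}$ is a rational idempotent, one may first pass to the direct summand $M\varepsilon_{L,0}$ with coefficients $\bigO_p[G]\varepsilon_{L,0}$ and then apply the above with $\varepsilon = \varepsilon_{L,1}$; it therefore suffices to verify, throughout on the $\varepsilon_L = \psi(\varepsilon_{L,1})\cdot\varepsilon_{L,0}$-component, conditions (a), (b), (c) of Theorem \ref{descent} together with its standing hypotheses. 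The latter are immediate: Hypotheses \ref{hyp} and \ref{hypsoule}(i)(b),(ii) hold because $K$ is totally imaginary (\S\ref{exp rec BL}); condition (i), $A(L)[p]=0$, gives Hypothesis \ref{hyptorsionfree}, since $H^0(K,T^\ast(1)) = T_p(A(L)) = 0$ as $A(L)$ is finitely generated while $A(L)[p]=0$ makes $H^1(\cO_{K,S},T^\ast(1))$ $\ZZ_p$-torsion-free; each $\chi\in\Upsilon$ satisfies Hypothesis \ref{bss-hypothesis}(i), whence Hypothesis \ref{hypsoule}(i)(c) (as in the proof of Theorem \ref{classical-imc-result}); and the Integrality Conjecture for $\varepsilon_{L,1}\cdot\ZZ_{p,L_n/K}$ is the $p$-part of the Rubin--Stark Conjecture (Remark \ref{RSremark}), which is assumed.

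For condition (c), we have $V^\ast(1) = (V_pA)_{L/K}$, so $H^2(\cO_{K,S},V^\ast(1))\cong H^2(\cO_{L,S},V_pA)$; by condition (ii) and the Poitou--Tate isomorphism (\ref{strict selmer}) this module is $\overline{\QQ_p}$-dual to a $G$-stable subspace of $\overline{\QQ_p}\otimes_\ZZ A(L)$. Every $\theta\in\widehat{G}_{0,A}$ satisfies $e_\theta(\overline{\QQ_p}\otimes A(L))=0$ and, since $A$ is principally polarised and the N\'eron--Tate pairing makes $\overline{\QQ_p}\otimes A(L)$ self-dual as a $\overline{\QQ_p}[G]$-module, also $e_{\theta^{-1}}(\overline{\QQ_p}\otimes A(L))=0$; hence $\varepsilon_{L,0}$ annihilates $H^2(\cO_{K,S},V^\ast(1))$ and condition (c) holds on the $\varepsilon_L$-component. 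For condition (b), each $\theta\in\widehat{G}_{0,A}$ moreover satisfies $L_S(\psi,\theta^{-1},1)\ne 0$, so Proposition \ref{soule-stark-ab-varieties} -- whose hypothesis on $\sha(A/L)[p^\infty]$ is condition (ii) -- identifies the $e_\theta$-component of Conjecture ${\rm SS}(M/K,R)$ with the explicit reciprocity identity (\ref{soule-stark}), which is condition (iii). Summing over $\theta\in\widehat{G}_{0,A}$ yields the $\varepsilon_{L,0}$-component, hence a fortiori the $\varepsilon_L$-component, of the Soul\'e--Stark Conjecture, giving condition (b).

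The crux is condition (a), namely Conjecture \ref{IMCgeneral} with coefficient ring exactly $\Lambda\varepsilon_{L,1}$. Decomposing $\varepsilon_{L,1}$ over the $\Gal(\overline{\QQ_p}/\QQ_p)$-orbits of the characters $\chi\in\Upsilon$, this amounts to Conjecture \ref{IMCgeneral} over each block $\Lambda_\chi$ (the $e_\chi$-component of $\Lambda$), for each $\chi\in\Upsilon$. Since the Rubin--Stark Conjecture is assumed and each such $\chi$ validates Hypothesis \ref{bss-hypothesis}, Theorem \ref{det-imc-result}(iii) delivers exactly this -- but a priori only over the possibly larger $\Lambda_\chi$-order $\mathfrak{A}_\chi$ defined there, so the whole point is to show $\mathfrak{A}_\chi = \Lambda_\chi$. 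By Theorem \ref{det-imc-result}(ii) it is enough to check, for each $\chi$, that either $\Delta$ has no $p$-torsion or the $\mu$-invariant of $\Cl(L_\infty)_\chi$ vanishes. If $\psi(G_K)[p^\infty]=0$ then $\Gal(L_\infty/K)$, and hence $\Delta$, has no $p$-torsion, so the first alternative holds; otherwise condition (iv) guarantees that $\Cl(L_\infty)$, and therefore each direct summand $\Cl(L_\infty)_\chi$, is finitely generated over $\ZZ_p$, which forces its $\mu$-invariant to vanish, so the second alternative holds. In every case $\mathfrak{A}_\chi = \Lambda_\chi$, condition (a) holds, and Theorem \ref{descent} then gives the equivariant Tamagawa Number Conjecture for $(h^1(A/L)(1),\bigO_p[G]\varepsilon_L)$, as required.

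The main obstacle is precisely this last step: Theorem \ref{det-imc-result} produces the higher-rank Main Conjecture only over the auxiliary order $\mathfrak{A}$, and identifying $\mathfrak{A}$ with the group algebra $\Lambda$ is exactly where hypotheses (i) and (iv) enter, via the vanishing of the relevant $\mu$-invariant. A secondary, more bookkeeping-heavy but routine, point is the componentwise reduction in the first paragraph -- running the proof of Theorem \ref{descent} with the non-trivial idempotent $\varepsilon_{L,1}$ and then descending along the rational idempotent $\varepsilon_{L,0}$ -- which is legitimate because all the constructions of \S\ref{cong conj section} are already formulated relative to an idempotent of $\Lambda$.
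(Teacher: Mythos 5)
Your proposal is correct and follows essentially the same route as the paper: apply Theorem \ref{descent} over the field $L_\infty$ cut out by $\psi$ with $\varepsilon = \varepsilon_{L,1}$, deduce condition (a) from Theorem \ref{det-imc-result} combined with hypothesis (iv) (the dichotomy ``no $p$-torsion in the finite part of $\Gal(L_\infty/K)$'' versus ``vanishing $\mu$-invariant of $\Cl(L_\infty)_\chi$'', which forces $\mathfrak{A}=\Lambda$), condition (b) from Proposition \ref{soule-stark-ab-varieties} together with hypotheses (ii) and (iii), and condition (c) from hypothesis (ii) via (\ref{strict selmer}), with the standing hypotheses checked from (i), (ii) and Remark \ref{RSremark} exactly as in the paper. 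The only real divergence is bookkeeping: where you assert an unproved ``$\psi(\varepsilon)$-component'' variant of Theorem \ref{descent} requiring only Hypothesis \ref{hypsoule}\,(i)\,(b),(c), the paper instead repackages the data as $(M',R',\mathcal{R}_p'=\mathcal{O}_p[G]\varepsilon_L,T')$ so that Theorem \ref{descent} (with Hypothesis \ref{hypsoule}\,(i)\,(a) intact) applies verbatim, invoking Remark \ref{extended field remark} because the tower attached to the cut-down lattice $T'$ may be strictly smaller than $L_\infty$, and using Lemma \ref{compact lemma}\,(iii),(iv) to obtain the $\ZZ_p$-torsion-freeness of $H^1(\cO_{K,S},(T')^\ast(1))$ from condition (i) --- details you would need to supply to make your first paragraph rigorous.
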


\begin{proof} The $\varepsilon_{L,0}$-component 
$M' := h^1 ( A / L) (1)\varepsilon_{L,0}$ of $h^1 ( A / L) (1)$ has a natural action of the $K$-algebra $R' := K[G]\varepsilon_{L,0}$. We define an $\mathcal{O}_p$-order $\mathcal{R}_p':= \mathcal{O}_p[G]\varepsilon_L$ in $(K_p\otimes_K R')\psi(\varepsilon_{L,1})$ and an $(\mathcal{R}_p'\times G_K)$-module $T' := \mathcal{R}_p'\otimes_{\mathcal{O}_p[G]}T$.  \medskip \\
 %
Then, with this notation, the basic strategy in this argument is to apply Theorem \ref{descent} with the data $\varepsilon$, $\mathcal{R}_p$, $R$,
 $T$, $M$ and $V$ taken to be $\varepsilon_{L,1}$, $\mathcal{R}_p'$, $R'$, $T'$, $M'$ and $V' := \QQ_p\otimes_{\Z_p}T'$ respectively and with the field $L_\infty$ as specified above. We must therefore verify that all of the necessary hypotheses are satisfied by this choice of data. \medskip \\
For each $n$ the kernel of the homomorphism $G_K \to {\rm Aut}_{\cR/p^n}(T/p^n) \cong (\cR/p^n)^\times$ induced by $\psi$ is contained in the kernel of the map $\chi_{T',n}$ from Definition \ref{character-definition} 
 and so Remark \ref{extended field remark} implies that this choice of $L_\infty$ is permissible in Theorem \ref{descent} for the above choice of data. \\
We also note that no finite place of $K$ can split completely in $L_\infty$. Indeed, the decomposition group in $G_K$ of any such place $v$ must be contained in $\ker(\psi)$ and hence acts trivially on $T \cong (T_p A)_{L/K}$ and this is not possible since $A (L_w)[p^\infty]$ is a finitely generated $\Z_p$-module for each place $w$ of $L$ above $v$. Since this argument also shows that the $\Z_p$-rank of $\Gal (L_\infty / K )$ is at least one, it follows that the conditions in the first sentence of Theorem \ref{descent} are satisfied in this case. \medskip \\
We claim next that Hypotheses \ref{hyp} and \ref{hyptorsionfree} are satisfied with $\mathcal{R}_p$ and $T$ taken to be $\mathcal{R}_p'$ and $T'$ respectively. \\
This is true for Hypotheses \ref{hyp} as $Y_K(T')$ is isomorphic  to $\mathcal{R}_p'\otimes_{\mathcal{O}_p}Y_K(T_pA)$ as an $\mathcal{R}_p'$-module and is true for Hypothesis \ref{hyptorsionfree}\,(i) since 
$H^0(K,(T')^*(1))$ is a submodule of the space $H^0(K,(V')^*(1)) = \varepsilon_L\bigl( H^0(L, (V_pA)^*(1))\bigr)$ and  $H^0(L, (V_pA)^*(1))$ vanishes. Concerning Hypothesis \ref{hyptorsionfree}\,(ii) we note first that the stated condition (i) implies $H^1(\mathcal{O}_{K, S},T^*(1))$ is $\ZZ_p$-torsion-free since its torsion submodule is isomorphic to 
\[ H^0(\mathcal{O}_{K, S},V^*(1)/T^*(1)) \cong 
 H^0(\mathcal{O}_{L, S},V_pA/T_pA) \cong A(L)[p^\infty].\]
This fact then combines with the results of Lemma \ref{compact lemma}\,(iii) (with $\cR_p$ taken to be $\mathcal{O}_p[G]$) and  Lemma \ref{compact lemma}\,(iv) (for the homomorphism $\mathcal{O}_p[G] \to \cR_p'$) to imply that $H^1(K,(T')^*(1))$ is isomorphic to the kernel of an endomorphism of a free $\mathcal{R}_p'$-module and so is torsion-free, as required.   \medskip \\
Finally, we note Hypothesis \ref{hypsoule} is satisfied with $\Lambda, \varepsilon$ and $\mathcal{R}_p$ taken to be 
$\Z_p\llbracket\Gal(L_\infty/K)\rrbracket$, $\varepsilon_{L,1}$ and $\mathcal{R}_p'$ respectively: the validity of Hypothesis \ref{hypsoule}\,(i)\,(a) in this case follows directly from the fact that $\varepsilon_L$ is the identity element of  
$\mathcal{R}_p'$, the validity of Hypothesis \ref{hypsoule}\,(i)\,(b) is clear and the validity of Hypothesis \ref{hypsoule}\,(i)\,(c) follows from the fact that the Teichm\"uller character does not belong to the set $\Upsilon$ of characters that is used to define $\varepsilon_{L,1}$.  \\
Now, since the validity of the Integrality Conjecture for all extensions $L_n/K$ follows directly from Remark \ref{RSremark} and the assumption that the Rubin-Stark Conjecture is valid for all finite abelian extensions of $K$, the above observations imply that the result of Theorem \ref{descent} implies the equivariant Tamagawa Number Conjecture is valid for the pair $(M',\mathcal{R}_p')$ provided that all of the conditions (a), (b) and (c) that occur in the latter result are satisfied in this case. \\
In addition, the condition in Theorem \ref{descent}\,(a) is satisfied since the stated condition (iv) and the assumed validity of the Rubin-Stark Conjecture combine with Theorem \ref{det-imc-result} to imply Conjecture \ref{IMCgeneral} is valid for $(L_\infty/K,S)$ with respect to the order $\mathfrak{A} = \Z_p\llbracket\Gal(L_\infty/K)\rrbracket\varepsilon_{L,0}$. \\
It is thus enough to note the conditions in Theorem \ref{descent}\,(b) and (c) are also satisfied since the definition of the idempotent $\varepsilon_{L,1}$ combines with the stated condition (ii) and the argument of Proposition \ref{soule-stark-ab-varieties} to imply both that $H^2(\mathcal{O}_{K,S},(V')^*(1)) \cong \varepsilon_L\cdot H^2(\mathcal{O}_{L,S},(V_pA)^*(1))$ vanishes and that the equality in Conjecture \ref{conj} is valid for the pair $(M',R')$. \medskip \\
This completes the proof of Theorem \ref{final cor}. 
\end{proof}

\printbibliography

\end{document}